\documentclass[11pt,reqno]{amsart}

\usepackage{amsmath,amsthm,amssymb,amsfonts}
\usepackage{enumerate}
\usepackage{tabto}
\usepackage[mathscr]{euscript}
\usepackage{xcolor}
\usepackage{layout}
\usepackage{fancyhdr}
\usepackage{array}
\usepackage{mathtools}
\usepackage{graphicx}
\usepackage{bm}
\usepackage{enumitem}
\usepackage{caption} 
\usepackage{color}
\usepackage{mathrsfs}
\usepackage{csquotes}
\usepackage{bookmark}
\usepackage{float}
\usepackage{multirow}
\usepackage[square,numbers,sort&compress]{natbib}
\usepackage{hyperref}
\hypersetup{colorlinks=true,linkcolor=blue,citecolor=red}
\allowdisplaybreaks
\usepackage{soul}
\usepackage{upgreek}

\def\Xint#1{\mathchoice
{\XXint\displaystyle\textstyle{#1}}%
{\XXint\textstyle\scriptstyle{#1}}%
{\XXint\scriptstyle\scriptscriptstyle{#1}}%
{\XXint\scriptscriptstyle\scriptscriptstyle{#1}}%
\!\int}
\def\XXint#1#2#3{{\setbox0=\hbox{$#1{#2#3}{\int}$ }
\vcenter{\hbox{$#2#3$ }}\kern-.6\wd0}}

\def\dashint{\Xint-}

\makeatletter
\renewcommand{\subsection}{%
  \@startsection{subsection}{2}{\z@}%
  {-3.25ex\@plus -1ex\@minus -.2ex}%
  {1.5ex\@plus .2ex}%
  {\normalfont\bfseries}}
\makeatother

\newtheorem{theorem}{Theorem}[section]
\newtheorem{lemma}[theorem]{Lemma}

\newtheorem{remark}[theorem]{Remark}
\theoremstyle{definition}
\newtheorem{definition}[theorem]{Definition}
\numberwithin{equation}{section}

\newcommand{ \dist }{ \operatorname{dist} }

\newcommand{ \loc }{ \operatorname{loc} }

\newcommand{\N}{\mathcal{N}}
\newcommand{\A}{\mathcal{A}}
\newcommand{\B}{\mathcal{B}}
\newcommand{\ve}{\varepsilon}

\newcommand{\ga} {\gamma}

\newcommand{\om} {\Omega}

\newcommand{\noi} {\noindent}

\newcommand{\ds} {\displaystyle}

\newcommand{\wdt} {\widetilde}

\newcommand{\W}{W^{1,\Psi}}

\newcommand{\nonum} {\nonumber}

\begin{document}
\title[Gradient estimates for two modulating coefficients]{Gradient estimates for generalized double phase problems with two modulating coefficients}

\author{Jehan Oh}\address{Department of Mathematics, Kyungpook National University, Daegu, 41566, Republic of Korea} \email{jehan.oh@knu.ac.kr}
\author{Ambesh Kumar Pandey}\address{Department of Applied Mathematics, Kongju National University, Gongju-si, Chungcheongnam-do, 32588, Republic of Korea} \email{pandey.ambesh190@gmail.com}

\subjclass[2020]{35J70, 35B65}
\date{\today}
\keywords{Calder\'on-Zygmund theory, double phase problem, gradient estimates, modulating coefficients, nonstandard growth}
\thanks{This work is supported by National Research Foundation of Korea (NRF) grant funded by the Korea government [Grant Nos. RS-2025-00555316 and RS-2025-25415411].}

\begin{abstract}
We establish Calder\'on-Zygmund estimates for solutions to non-uniformly elliptic equations in divergence form modeled on the generalized double phase structure
$$\Psi(x,z)=a(x)G(|z|)+b(x)H(|z|),$$
where $G$ and $H$ are Young functions and $a,b$ are non-negative, H\"older continuous coefficients satisfying a natural non-degeneracy condition $a(\cdot)+b(\cdot)\ge\mu>0$. Under natural assumptions on $G,H$ and the H\"older regularity of $a,b$, we prove that the gradient of any local solution inherits the same integrability as the datum. More precisely, if $\Psi(\cdot,F)\in L^\Theta_{\mathrm{loc}}$, then $\Psi(\cdot,Du)\in L^\Theta_{\mathrm{loc}}$ for every $\Theta\in\N$. Our results extend those of Baasandorj-Byun-Oh (\emph{J. Funct. Anal.} \textbf{279}(7), 2020) from the classical generalized double phase structure $G+a(x)H$ to the two modulating coefficient setting and extend the gradient estimates of Kim-Kim-Oh (\emph{Nonlinear Differ. Equ. Appl.} \textbf{33}, 2026) by establishing Calder\'on-Zygmund estimates for generalized double phase functionals in a borderline case within the two modulating coefficient framework.
\end{abstract}
\maketitle
\tableofcontents

\section{Introduction}
\noi In recent years, Calder\'on-Zygmund estimates for quasilinear elliptic equations with non-standard growth have received considerable attention. In particular, double phase problems have been extensively studied as models for heterogeneous materials with two different hardening exponents. This naturally motivates the study of more general models involving two modulating coefficients. 

In this article, we are concerned with the study of local Calder\'on-Zygmund estimates for distributional solutions to a class of non-uniformly elliptic equations governed by \emph{two modulating coefficients}. This extends the usual double phase setting, where a single modulating weight interpolates between two growth rates to a two parameter anisotropic structure. The model equation we consider is given by
\begin{equation}\label{model_problem_1}
\begin{aligned}
-\operatorname{div}&\bigg(a(x)G'(|Du|)\frac{Du}{|Du|}
      +b(x)H'(|Du|)\frac{Du}{|Du|}\bigg) \\[3pt]
&\qquad=-\operatorname{div}\bigg(a(x)G'(|F|)\frac{F}{|F|}
      +b(x)H'(|F|)\frac{F}{|F|}\bigg) \quad \text{in } \Omega,
\end{aligned}
\end{equation}
where $F:\Omega\to\mathbb{R}^n$ is a given vector field and
$\Omega\subset\mathbb{R}^n$ is a bounded open domain with $n\ge 2$.

Throughout, we assume $G,H:[0,\infty)\to[0,\infty)$ are Young functions in the sense of Definition~\ref{def:Young} in Section~\ref{sec_prelim} and that the two coefficient functions
$a,b:\Omega\to\mathbb{R}$ satisfy
\begin{equation} 
0 \le a(\cdot) \in C^{0,\alpha}(\Omega), \qquad
0 \le b(\cdot) \in C^{0,\beta}(\Omega), \qquad
\alpha,\beta \in (0,1],
\label{coeff_1}
\end{equation}
and there exists $\mu>0$ such that
\begin{equation}\label{coeff_2}
   a(x)+b(x)\ge\mu
\end{equation}
for all $x\in \Omega$, which avoids the case when $a(\cdot)\equiv0\equiv b(\cdot)$.
Since the problem involves two modulating coefficients rather than a single coefficient, in order to study the
interaction between different phases, we impose the following gap condition
\begin{equation}\label{condition_1}
G \prec H,
\end{equation}
together with the assumption
\begin{equation}
\kappa:=\sup_{t>0} \frac{H(t)}{G(t)+G^{1+\frac{\gamma}{n}}(t)} < +\infty
\quad \text{ where } \gamma := \min\{\alpha,\beta\}.
\label{condition_2}
\end{equation}
\begin{remark}
    In the special case $G(t)=t^p$ and $H(t)=t^q$ with $q\ge p>1$, condition \eqref{condition_2} reduces to
    \begin{equation}\label{eq:growth_p_q}
        \frac qp\le1+\frac\ga n,
    \end{equation}
    and hence extends the setting of \cite{Kim2026}, where the strict inequality was assumed. Our results thus also cover the borderline case $q/p=1+\ga/n$, which requires special treatment.
\end{remark}
\begin{remark}
Assumption \eqref{condition_1} is the natural counterpart, in this Orlicz setting, of the growth restriction $p \le q$ familiar from the
classical double phase functional $|Du|^p + a(x)|Du|^q$. This gap condition plays an important role in establishing Lemma~\ref{lem:modular_equivalence}.
\end{remark}
\begin{remark}
The exponent $\gamma = \min\{\alpha,\beta\}$
in \eqref{condition_2} reflects the fact that the regularity available
for the two coefficient structure is governed by the coefficient with the smaller H\"older exponent, i.e., the estimates can only be as good as the less regular coefficient allows.  
\end{remark}

Our main interest is to determine sharp conditions on $G$, $H$, $\alpha$ and $\beta$ under which the local
Calder\'on-Zygmund type relation
\begin{multline}\label{eq:1.3prime}
a(x)G(|F|) + b(x)H(|F|) \in L^{\Theta}_{\mathrm{loc}}(\Omega) \\ \Longrightarrow\;
a(x)G(|Du|) + b(x)H(|Du|) \in L^{\Theta}_{\mathrm{loc}}(\Omega)
\end{multline}
holds true for every Young function $\Theta \in \mathcal{N}$ with
$s(\Theta) \ge 1$ (see Subsection~3.1 for the definitions of
$s(\cdot)$ and $\mathcal{N}$).

We remark that equation~\eqref{model_problem_1} is the Euler-Lagrange equation associated with the functional
\begin{equation}
v \;\mapsto\; \mathcal{J}(v,\Omega)
- \int_{\Omega} \left\langle a(x)G'(|F|)\frac{F}{|F|}
+ b(x)H'(|F|)\frac{F}{|F|},\, Dv \right\rangle dx,
\end{equation}
where the underlying energy $\mathcal{J}$ is the double phase functional given by
\begin{equation}\label{eq:general_energy}
\mathcal{J}(v,\Omega) = \int_{\Omega}
\Psi(x,Dv)\,dx,
\end{equation}
and the integrand is denoted by
\[\Psi(x,z):=a(x)G(|z|)+b(x)H(|z|)\]
for $x\in\Omega$ and $z\in\mathbb R^n$.

The functional $\mathcal{J}$ for $a(\cdot)\equiv1$ provides a natural generalization of the classical
double phase functional
$$W^{1,1}(\om)\ni v\mapsto \mathcal{P}(v,\om):=\int_{\om}\bigl(|Dv|^p+a(x)|Dv|^q\bigr)\,dx,
\qquad 1<p\le q.$$
The functional $\mathcal P$ was first introduced by Zhikov \cite{Zhikov1986,Zhikov1993,Zhikov1994} in the context of homogenization, as a model for strongly anisotropic materials; considering two constituent materials with power hardening exponents $p$ and $q$ respectively, the coefficient $a(\cdot)$ dictates the local geometry of the composite, interpolating between the two phases as it varies across the domain, so that the associated Euler-Lagrange operator is non-uniformly elliptic, with ellipticity ratio degenerating precisely where $a(\cdot)$ vanishes. The functional also provides some of the first examples of the Lavrentiev phenomenon, whereby minimizers cannot, in general, be approximated by smooth functions; see \cite{Zhikov1995,Zhikov1997}. For the regularity theory, Colombo and Mingione, together with Baroni \cite{Baroni2015,Baroni2018,Colombo2015,Colombo2015a}, established sharp gradient H\"older continuity for minimizers of $\mathcal P$, proving that the necessary and sufficient condition between the two exponents and the H\"older regularity of the coefficient is
$$\frac{q}{p}\leq1+\frac{\alpha}{n}.$$
Since then, the regularity theory for double phase problems has developed considerably. Beck and Mingione \cite{Beck} studied non-uniformly elliptic variational problems and obtained optimal conditions for local Lipschitz regularity under general growth assumptions. Byun et al. \cite{Byun2017a} obtained global Calder\'on-Zygmund estimates for double phase problems with BMO coefficients on nonsmooth domains. More recently, \cite{Byun2026} established Calder\'on-Zygmund estimates for double phase problems with matrix weights. Gradient estimates and regularity results for multi-phase variational integrals were further developed in \cite{DeFilippis2019a,DeFilippis2022}. See also \cite{Ok2020} for regularity results under additional integrability assumptions on the minimizer.

The functional $\mathcal J$ in \eqref{eq:general_energy}, the generalized double phase functional with two modulating coefficients includes each of the following as a special case.
\begin{enumerate}
\item[$(1)$] $p$-Laplacian: $\Psi(x,z)=|z|^p$, e.g. \cite{Uhlenbeck,Uralceva1968,Manfredi1988}.
\item[$(2)$] Double phase: $\Psi(x,z)=|z|^p+a(x)|z|^q$,
e.g. \cite{Byun2018,Colombo2016, Colombo2015,Colombo2015a,Mingione_2019,Ok2017,Shin2020}.
\item[$(3)$] Double phase with two modulating coefficients: $\Psi(x,z)=a(x)|z|^p+b(x)|z|^q$, e.g. \cite{Kim2024a,Kim2025a,Kim2026}.
\item[$(4)$] Orlicz growth: $\Psi(x,z)=G(|z|)$, e.g. \cite{Baasandorj2021a,Byun2020a,Cho2018,Diening2012,harjulehto2019generalized}.
\item[$(5)$] Orlicz double phase: $\Psi(x,z)=G(|z|)+a(x)H(|z|)$, e.g. \cite{Baasandorj2020,Byun2020,Baasandorj_2025,Ok_2022}.
\end{enumerate}
The examples discussed above belong to the class of variational problems with non-standard growth, introduced by Marcellini in his pioneering works \cite{Marcellini1986,Marcellini1989,Marcellini1991}. Over the years, this class of problems has been the subject of extensive study; see, for instance, \cite{Breit,Esposito2004,Esposito1999,Fonseca2004,Schmidt} and the references therein. For recent developments in problems with non-standard growth and various types of non-uniform ellipticity, we refer to \cite{Mingione_2021}.

The functional $\mathcal{J}$ can be viewed as a generalized double phase energy describing a heterogeneous medium in which two phases, governed by the growth functions $G$ and $H$, coexist and compete throughout the domain. The coefficients $a(x)$ and $b(x)$ determine the local contribution of the growth functions $G$ and $H$, respectively. The assumption $a(x)+b(x)\ge\mu>0$ guarantees that at least one phase is active at every point, so that the energy remains non-degenerate. In regions where $a(x)$ is dominant, the $G$-phase contributes more significantly, while in regions where $b(x)$ is larger, the $H$-phase becomes dominant; in general, both phases may contribute simultaneously. Therefore, we can say that the coefficients $a(\cdot)$ and $b(\cdot)$ regulate the mixture between two different materials, whose constitutive behaviors are described by the growth functions $G$ and $H$, respectively. Unlike the classical double phase functional introduced by Zhikov, where one phase is always present and only the second phase is modulated by a spatially varying coefficient, here both phases are independently weighted by the coefficients $a(x)$ and $b(x)$, allowing either phase to dominate.

One of the most difficult parts in proving the relation
\eqref{eq:1.3prime} under the condition $\kappa<\infty$ is to obtain the
higher integrability estimate of the gradient up to the growth
$G^{1+\gamma/n}$ (see Theorem~\ref{thm:fractional_diff}) and to freeze the two coefficients $a$ and $b$ in the
correct order during the comparison process (see Section~\ref{sec_7}). We derive the higher
integrability estimate by proving a fractional differentiability result
and applying the fractional Sobolev embedding theorem, an approach
motivated by the pioneering works of Colombo and Mingione
\cite{Colombo2015,Colombo2016}. Because $a(\cdot)$ no
longer carries a fixed unit coefficient; this argument does not carry over
from the single-coefficient case directly (cf.\,\cite{Baasandorj2020}). Therefore, we split each ball into the region where $a$ is bounded below and the region where $b$ is, and pass between the two using the gap condition $G\prec H$. In addition, because of the lack of homogeneity properties for equations of the generalized double phase type, we adopt the maximal function-free technique, first
introduced by Acerbi and Mingione \cite{Mingione2007} and later used in several different settings; this technique is particularly well suited to problems with
non-standard growth.


This paper is organized as follows. Section~\ref{sec_2} states our main assumptions and results, including Theorem~\ref{main_thm}. Section~\ref{sec_prelim} collects background material on Young functions and the Orlicz and Musielak-Orlicz spaces used throughout.
Section~\ref{sec_4} discusses the Lavrentiev phenomenon and proves a
Sobolev-Poincar\'e inequality adapted to the two modulating coefficients.
In Section~\ref{sect_5}, we prove a fractional differentiability result for the
homogeneous equation. Section~\ref{sec_6} combines results from Section~\ref{sect_5} with a Gehring-type argument to obtain a higher integrability estimate. The last section proves the main result via an exit time argument based on a chain of suitable comparison functions.

\section{Statement of the Main results}\label{sec_2}

\noi We study distributional solutions to the problem
\begin{equation}\label{model_problem_2}
-\operatorname{div}\A(x,Du)=-\operatorname{div}\B(x,F) \qquad\text{in }\Omega.
\end{equation}
The equation above is understood in a general framework and is modeled on the prototype equation \eqref{model_problem_1}. Let $\tilde\A:[0,\infty)\times[0,\infty)\times(\mathbb R^n\setminus\{0\})\to\mathbb R^n$ be a vector field continuously differentiable with respect to the gradient variable $z$, and set
$$\A(x,z):=\tilde\A(a(x),b(x),z).$$ 
We assume that for every $a,a_1,a_2,b,b_1,b_2\ge0$ and $z,\xi\in\mathbb R^n\setminus\{0\}$ there exist constants $0<\nu\le L<\infty,$ such that the following holds true
\begin{equation}\label{growth_conditions}
\left\{
\begin{aligned}
&|\tilde\A(a,b,z)|+|D_z\tilde\A(a,b,z)||z|\le L\,\frac{aG(|z|)+bH(|z|)}{|z|},\\[6pt]
&\nu\,\frac{aG(|z|)+bH(|z|)}{|z|^2}|\xi|^2\le\langle D_z\tilde\A(a,b,z)\xi,\xi\rangle,\\[6pt]
&\left|\tilde\A(a_1,b,z)-\tilde\A(a_2,b,z)\right|\le L\,\frac{|a_1-a_2|\,G(|z|)}{|z|},\\[6pt]
&\left|\tilde\A(a,b_1,z)-\tilde\A(a,b_2,z)\right|\le L\,\frac{|b_1-b_2|\,H(|z|)}{|z|}.
\end{aligned}
\right.
\end{equation}
Here $G$ and $H$ are Young functions satisfying assumptions \eqref{condition_1} and \eqref{condition_2}, while
$a(\cdot)$ and $b(\cdot)$ are as in \eqref{coeff_1} and \eqref{coeff_2}.
\begin{remark}
We note that the assumptions in \eqref{growth_conditions} imply that
\begin{equation*}
|\A(x_1,z)-\A(x_2,z)|\le L\,\frac{|a(x_1)-a(x_2)|G(|z|)+|b(x_1)-b(x_2)|H(|z|)}{|z|}.
\end{equation*}
\end{remark}
\begin{remark}
We can observe that the above assumptions are satisfied by the model operator arising from the double phase energy functional $\mathcal P(v,\Omega)=\int_\Omega\bigl(a(x)G(|Dv|)+b(x)H(|Dv|)\bigr)\,dx$, namely
\begin{equation*}
\mathcal A(a,b,z):=a\,G'(|z|)\frac{z}{|z|}+b\,H'(|z|)\frac{z}{|z|},\qquad a,b\ge0,\ z\in\mathbb R^n\setminus\{0\}.
\end{equation*}
\end{remark}

The vector field $\B:\Omega\times\mathbb R^n\rightarrow\mathbb R^n$
is assumed to satisfy the growth condition
\begin{equation}\label{growth_condition_2}
|\B(x,z)|\le L\,\frac{a(x)G(|z|)+b(x)H(|z|)}{|z|}
\end{equation}
for every
$x\in\Omega$
and
$z\in\mathbb R^n\setminus\{0\}$.

For the sake of convenience, we introduce the notations
\begin{equation}\label{function_psi}
\left.
\begin{aligned}
\Psi(x,z)&:=a(x)G(|z|)+b(x)H(|z|),\\
\Psi_1(x,z)&:=a(x)G(|z|)+b(x)H(|z|)+1
\end{aligned}\right.\qquad \text{ for }(x,z)\in\Omega\times\mathbb R^n.
\end{equation}

By abuse of notation, we shall also write $\Psi(x,z)$ for
$x\in\Omega$ and $z\in\mathbb R$ whenever no confusion may arise. Throughout the paper, the notation $\texttt{data}$ denotes the following collection of parameters
\small{\begin{equation*}
\begin{split}
\texttt{data}:=\bigl(n,\kappa,s(G),s(H),\nu,L,\alpha,\beta,&\mu,\|a\|_{L^\infty(\Omega)},[a]_{0,\alpha},\\
&\|b\|_{L^\infty(\Omega)},[b]_{0,\beta},\|\Psi_1(\cdot,Du)\|_{L^1(\Omega)}\bigr),
\end{split}
\end{equation*}}
where $s(G)$ and $s(H)$ denote the indices of $G$ and $H$, respectively (see Subsection~$3.1$ for their definition) and $\kappa$ is given by \eqref{condition_2}.
We now present the statement of the main result.
\begin{theorem}\label{main_thm}
Let $u\in W^{1,1}(\Omega)$ be a distributional solution to \eqref{model_problem_2}  under the assumptions \eqref{coeff_1}--\eqref{condition_2}, \eqref{growth_conditions} and \eqref{growth_condition_2}, with
\begin{equation}
\Psi(x,Du),\;\Psi(x,F)\in L^1(\Omega).
\end{equation}
Then for every $\Theta\in\mathcal{N}$ with $s(\Theta)\geq 1$,
\begin{equation}
\Psi(x,F)\in L^{\Theta}_{\loc}(\Omega)
\;\Longrightarrow\;
\Psi(x,Du)\in L^{\Theta}_{\loc}(\Omega).
\end{equation}
Moreover, for open subsets $\Omega_0\Subset\Omega_1\Subset\Omega$ with
\[
\dist(\Omega_0,\partial\Omega_1)\sim\dist(\Omega_1,\partial\Omega)\sim\dist(\Omega_0,\partial\Omega),
\]
there exist $r>0$ and $c>0$ depending on $\textnormal{\texttt{data}}$, $\dist(\Omega_0,\partial\Omega)$, $\|\Theta(\Psi(\cdot,F))\|_{L^1(\Omega_1)}$ and $s(\Theta)$ such that
\begin{equation}
\dashint_{B_{R/2}}\Theta\bigl(\Psi(x,Du)\bigr)\,dx
\leq
c\,\Theta\!\left(\dashint_{B_R}\Psi(x,Du)\,dx\right)
+c\dashint_{B_R}\Theta\bigl(\Psi(x,F)\bigr)\,dx+c
\end{equation}
holds for every ball $B_R\subset\Omega_0$ with $R\leq r$.
\end{theorem}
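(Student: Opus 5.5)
The plan is to follow the maximal function-free scheme of Acerbi and Mingione, as adapted to generalized double phase problems by Baasandorj, Byun and Oh. The ingredients are, in order: the Sobolev-Poincar\'e inequality of Section~\ref{sec_4}, the higher integrability of Section~\ref{sec_6} (built on the fractional differentiability of Theorem~\ref{thm:fractional_diff}), a chain of comparison estimates, and an exit-time covering argument.

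\textbf{Step 1: reduction and Caccioppoli.} Fix $B_R\subset\Omega_0$ with $R\le r$, where $r$ is small so that the oscillations $[a]_{0,\alpha}R^\alpha$ and $[b]_{0,\beta}R^\beta$ are controlled. Normalize by
\[
\lambda_0:=\dashint_{B_R}\Psi_1(x,Du)\,dx+\Big(\dashint_{B_R}\Theta(\Psi_1(x,F))\,dx\Big)^{1/s(\Theta)}\cdot\delta^{-1},
\]
with $\delta$ to be chosen. From the Caccioppoli inequality and the Sobolev-Poincar\'e inequality of Section~\ref{sec_4}, derive a reverse H\"older inequality with increasing support for $\Psi(x,Du)$. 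By Gehring's lemma this gives a higher integrability $\Psi(\cdot,Du)\in L^{1+\sigma}$ with a small $\sigma$.

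\textbf{Step 2: comparison chain on a ball $B_\rho(x_0)$ with $\dashint\Psi\approx\lambda$.}
\begin{enumerate}
\item First, let $w$ solve $-\operatorname{div}\A(x,Dw)=0$ in $B_{2\rho}$ with $w=u$ on the boundary. Testing with $u-w$ and using monotonicity gives
\[
\dashint\Psi(x,Du-Dw)\le \varepsilon\lambda+c(\varepsilon)\dashint\Psi(x,F).
\]
Because of the lack of homogeneity, this requires the $\Delta_2$-type inequalities for $G$ and $H$ together with a Young inequality split.
\item Next, apply the higher integrability of Section~\ref{sec_6}. This yields $w$ with a gain up to $\Psi$ times a power, and, crucially, $\Psi(x,Dw)$ is integrable enough that $G(|Dw|)^{1+\gamma/n}$ is controlled. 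This is exactly where the borderline $\kappa<\infty$ enters: $H\le \kappa(G+G^{1+\gamma/n})$.
\item Then freeze the coefficients and let $v$ solve $-\operatorname{div}\tilde\A(a(x_0),b(x_0),Dv)=0$ with $v=w$ on the boundary. The freezing error is bounded by
\[
\dashint |a-a(x_0)|\,G(|Dw|)+|b-b(x_0)|\,H(|Dw|)\lesssim \rho^\alpha\dashint G(|Dw|)+\rho^\beta\dashint H(|Dw|).
\]
\end{enumerate}

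\textbf{The main obstacle: the freezing error in case (iii).} The delicate point is absorbing $\rho^\beta\dashint H(|Dw|)$ into $\varepsilon\lambda$. Here I split $B_\rho$ into the region where $a\ge\mu/2$ and the region where $b\ge\mu/2$ (by \eqref{coeff_2}, one of these holds at each point), and the order of freezing is chosen accordingly.

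If $a\ge\mu/4$ on the ball, then $\lambda\gtrsim\dashint G(|Dw|)$. Using $H\lesssim G+G^{1+\gamma/n}$, the higher integrability, and the bound $\rho^{n}\lambda\lesssim\|\Psi_1(\cdot,Du)\|_{L^1}$, we get
\[
\rho^\gamma\Big(\dashint G(|Dw|)^{1+\gamma/n}\Big)\lesssim \rho^\gamma\lambda^{\gamma/n}\lambda\lesssim\lambda .
\]
Smallness comes from taking the higher integrability slightly beyond $1+\gamma/n$, or from $r$ small; in the borderline case the power is exactly balanced, so I use the gain from the fractional estimate.

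If instead $b\ge\mu/4$, the $H$-phase already dominates, so $b\,H(|Dw|)\lesssim\Psi$. The term $|a-a(x_0)|G$ is then harmless by $G\prec H$ (Lemma~\ref{lem:modular_equivalence}).

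Finally, $v$ solves an autonomous Orlicz equation, so Lieberman-type Lipschitz bounds give $\sup_{B_{\rho}}\Psi(x_0,Dv)\le c\lambda$.

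\textbf{Step 3: exit time and level sets.} For $\lambda>A\lambda_0$, apply a Vitali covering to the level set $\{\Psi(x,Du)>\lambda\}$ using exit-time balls. Combining Steps 1 and 2 gives
\[
|\{\Psi(x,Du)>K\lambda\}|\le \varepsilon\,|\{\Psi(x,Du)>\lambda/4\}|+\frac{c}{\lambda}\int_{\{\Psi(x,F)>\delta\lambda\}}\Psi(x,F).
\]
Integrate this against $d\Theta(\lambda)$ and use $s(\Theta)\ge1$ together with the $\Delta_2$ property of $\Theta\in\N$. Choosing $\varepsilon$ small relative to $K^{s(\Theta)}$ lets us absorb the first term, after a truncation of $\Psi(x,Du)$ at level $k$ to ensure finiteness and then letting $k\to\infty$. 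A standard covering from $B_{R/2}$ to $B_R$ then yields the stated estimate.
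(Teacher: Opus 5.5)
Your overall architecture matches the paper's: exit-time balls with a Vitali covering, a chain of comparison problems ending in an autonomous equation with Lieberman's Lipschitz bound, a level-set decay estimate, and truncation for $\Theta$. The gap is exactly at the step you call the main obstacle.

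\textbf{The borderline smallness.} You correctly note that $\rho^n\lambda\lesssim\|\Psi_1(\cdot,Du)\|_{L^1}$ only gives $\rho^\gamma\lambda^{\gamma/n}\lesssim\|\Psi_1(\cdot,Du)\|_{L^1}^{\gamma/n}$. This is bounded but not small in the borderline case. Your remedy, ``the gain from the fractional estimate'', does not break this balance. The structure condition only yields $H\le\kappa(G+G^{1+\gamma/n})$. So any extra integrability of $G(|Dw|)$ beyond the exponent $1+\gamma/n$ still only gives, on a larger ball, $\rho^\beta\dashint G^{1+\gamma/n}(|Dw|)\,dx\lesssim\rho^\beta\bigl(\dashint\Psi(x,Dw)\,dx\bigr)^{\gamma/n}\dashint\Psi(x,Dw)\,dx$. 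That is the same balanced quantity.

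The missing idea is to use the Gehring-type higher integrability of $\Psi(\cdot,Du)$ from Theorem~\ref{thm:6.1}. You derive it in your Step~1 but never use it. By \eqref{eq:energy_bound},
$\dashint_{B_\rho}\Psi_1(x,Du)\,dx\le\bigl(\dashint_{B_\rho}[\Psi_1(x,Du)]^{1+\delta}\,dx\bigr)^{1/(1+\delta)}\le c(\texttt{data}_0)\,\rho^{-n/(1+\delta)}$.
Hence the dangerous factor is bounded by $c\,\rho^{\beta-\gamma/(1+\delta)}$, a strictly positive power; this is the exponent $s_1$ in Step~6 of the paper. This is also what lets $r$ depend only on the listed data. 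Absolute continuity of $\int\Psi(\cdot,Du)$ would give smallness too, but with $r$ depending on $u$ itself.

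In your plan the two ingredients have swapped roles. The quantitative $G^{1+\gamma/n}$ reverse H\"older bound \eqref{7.49} comes from the fractional-differentiability and Baasandorj--Byun--Oh theory. The smallness comes from Section~\ref{sec_6}.

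\textbf{Freezing both coefficients at the center.} Freezing $a$ and $b$ simultaneously at $x_0$ does not work as stated. The freezing error is not $\dashint|b-b(x_0)|H(|Dw|)$ but $\dashint|b-b(x_0)|H'(|Dw|)|Dv-Dw|$. After Young's inequality you must control $\operatorname{osc}b\,\dashint H(|Dv|)$ over the whole ball, up to the boundary. When $b(x_0)\lesssim\rho^\beta$, this is not controlled by the frozen energy $a(x_0)G+b(x_0)H$ of $v$. For the same reason you cannot verify that $v-w$ is an admissible test function in the equation for $w$, which requires $b(x)H(|Dv|)\in L^1$.

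The paper resolves both points by freezing in a specific order:
\begin{enumerate}
\item First it freezes $a$ at its infimum on $\overline{8B_i}$. In the $G$-phase this forces $a(x_{i,a}^m)\ge\mu/2$, so $w_i$ solves a $G+\tilde b(x)H$ problem, and the conditional reverse H\"older inequality of Baasandorj--Byun--Oh applies.
\item Then it freezes $b$ at its supremum on $2B_i$. This gives $\operatorname{osc}b\,H(|Dv_i|)\le b(x_{i,b}^M)H(|Dv_i|)\le\Psi_{a,b}(Dv_i)$, which is controlled by minimality. It also gives $v_i\in W^{1,H}(2B_i)$, and the pointwise bound $\Psi(x,Dv_i)\le c(\Psi_{a,b}(Dv_i)+1)$ needed for the $\sup$ estimate.
\end{enumerate}
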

Theorem~\ref{main_thm} is a natural generalization of several existing Calder\'on-Zygmund estimates in the double phase setting. In particular, the results in the seminal papers \cite{Colombo2016,Mingione_2019} consider the power growth case \(G(t)=t^p\), \(H(t)=t^q\), with \(a\equiv1\) and \(\Theta(t)=t^\gamma\), \(\gamma>1\). The generalized Orlicz setting with \(a\equiv1\) was studied in \cite{Baasandorj2020}, while \cite{Kim2026} treats the power growth case \(G(t)=t^p\), \(H(t)=t^q\) with two modulating coefficients under a strict inequality of the growth condition \eqref{eq:growth_p_q}. Our result considers the generalized double phase structure with two modulating coefficients, $a(\cdot)$ and $b(\cdot)$, and thus includes these settings as special cases. We also mention that, in the classical setting \(G(t)=t^p\), \(H(t)=t^q\), with \(a\equiv1\), the counterexamples in \cite{Esposito2004,Fonseca2004} show that the assumptions of Theorem~\ref{main_thm} are optimal and cannot be dropped. In particular, the result is false when \eqref{condition_2} is not satisfied.


We conclude this section with a remark concerning the regularity assumptions imposed on the coefficient functions.
\begin{remark}
The H\"older continuity assumption on $a(\cdot)$ is mainly used to establish the fractional differentiability estimate in Theorem~\ref{thm:fractional_diff}, in particular, the Nikolski space estimate. The Calder\'on-Zygmund estimate itself requires only a VMO (or sufficiently small BMO) assumption (see Step 3, Proof of Theorem~\ref{main_thm}), while continuity is sufficient to prove the absence of the Lavrentiev phenomenon (see Theorem~\ref{thm:Lavrentiev}).
\end{remark}

\section{Preliminaries}\label{sec_prelim}
\noi In this section, we introduce the notation and collect some preliminary results that will be used throughout the paper.
\subsection{Notations}
We begin by introducing the notation used throughout the paper.

For $x_0\in\mathbb{R}^n$ and $\rho>0$, we denote by
$$
B_\rho(x_0):=\{x\in\mathbb{R}^n:|x-x_0|<\rho\}
$$
the open ball centered at $x_0$ with radius $\rho$. Whenever the center is clear from the context, we simply write $B_\rho$ instead of $B_\rho(x_0)$.

Given a measurable set $E\subset\mathbb{R}^n$ with $0<|E|<\infty$ and an integrable function $g:E\to\mathbb{R}^k$, we write
$$\dashint_E g(x)\,dx:=\frac1{|E|}\int_E g(x)\,dx$$ 
for the integral average of $g$ over $E$.

For $\beta\in(0,1]$ and $g:E\to\mathbb{R}$, the H\"older seminorm of $g$ on $E$ is defined by
$$[g]_{0,\beta;E}:=\sup_{\substack{x,y\in E\\x\neq y}}\frac{|g(x)-g(y)|}{|x-y|^\beta}.
$$
When the underlying domain is understood, we simply write $[g]_{0,\beta}$.

Throughout the paper, $c$ denotes a positive constant whose value may change from line to line. Constants whose values remain fixed in a given argument will be denoted by $c_1,c_*, C$, etc. Whenever necessary, the dependence of a constant on relevant parameters will be indicated explicitly; for example, $c\equiv c(\mathrm{\texttt{data}})$
means that $c$ depends only on the quantities collected in $\texttt{data}$.
\begin{definition}[Young function]\label{def:Young}
A Young function $\Phi:[0,\infty)\rightarrow[0,\infty)$ is an increasing convex function satisfying
    \begin{equation}
        \Phi(0)=0, \quad\ds\lim_{s\to\infty}\Phi(s)=\infty,\quad \ds\lim_{s\to0^+}\frac{\Phi(s)}{s}=0 \quad \text{and}\quad\ds\lim_{s\to\infty}\frac{\Phi(s)}{s}=\infty.
    \end{equation}    
We denote by $\mathcal{N}$ the family of all Young functions $\Phi:[0,\infty)\to[0,\infty)$ such that $\Phi\in C^1([0,\infty))\cap C^2((0,\infty))$ and satisfy
\begin{equation}\label{eq:Phi-structure}
\frac{1}{s(\Phi)}\le\frac{t\Phi''(t)}{\Phi'(t)}\le s(\Phi),\qquad t>0,
\end{equation}
for some constant $s(\Phi)\ge1$.
An immediate consequence of \eqref{eq:Phi-structure} is
\[1+\frac1{s(\Phi)}\le\frac{t\Phi'(t)}{\Phi(t)}\le1+s(\Phi),\]
and therefore
\begin{equation}\label{eq:Phi-growth}
\Phi(t)\approx t\Phi'(t)\approx t^2\Phi''(t),\qquad t>0,
\end{equation}
where the implicit constants depend only on $s(\Phi)$.
 \end{definition}
 
Associated with every Young function $\Phi$, we define the nonlinear map
\begin{equation}\label{vector_field}
 V_\Phi:\mathbb{R}^n\setminus\{0\}\to\mathbb{R}^n,\qquad V_\Phi(z):=
 \left(\frac{\Phi'(|z|)}{|z|}\right)^{1/2}z.
\end{equation}
The map $V_\Phi$ allows us to express the monotonicity properties of the vector field $\A(x,z)$ in a simplified form. We shall frequently use the following inequalities collected from \cite{Diening_2008}.
\begin{gather}
|V_\Phi(z_1)-V_\Phi(z_2)|^2\approx
\Phi''(|z_1|+|z_2|)|z_1-z_2|^2\approx\frac{\Phi'(|z_1|+|z_2|)}{|z_1|+|z_2|}|z_1-z_2|^2,
\label{eq:VPhi-est}\\[3pt]
\Phi(|z_1-z_2|)\lesssim\Phi(|z_1|+|z_2|)\frac{|z_1-z_2|}{|z_1|+|z_2|},\nonumber\\[3pt]
\left\langle\Phi'(|z_1|)\frac{z_1}{|z_1|}-\Phi'(|z_2|)\frac{z_2}{|z_2|},
\,z_1-z_2\right\rangle\approx|V_\Phi(z_1)-V_\Phi(z_2)|^2,\label{eq:Phi-mon}
\end{gather}
where the implicit constants depend only on $n$ and $s(\Phi)$.

Using \eqref{growth_conditions}$_2$ together with the definition \eqref{vector_field}, we obtain
\begin{equation}\label{eq:A-monotonicity}
a(x)|V_G(z_1)-V_G(z_2)|^2+b(x)|V_H(z_1)-V_H(z_2)|^2
\lesssim\left\langle \A(x,z_1)-\A(x,z_2),z_1-z_2\right\rangle,
\end{equation}
for every $x\in\Omega$ and $z_1,z_2\in\mathbb{R}^n\setminus\{0\}$.

Moreover, arguing as in \cite{Baasandorj2020} and using the growth assumptions
\eqref{growth_conditions}, we have
\begin{equation}\label{eq:A-difference}
|\A(x,z_1)-\A(x,z_2)|
\lesssim
\left(
a(x)\frac{G'(|z_1|+|z_2|)}{|z_1|+|z_2|}+b(x)\frac{H'(|z_1|+|z_2|)}{|z_1|+|z_2|}
\right)|z_1-z_2|,
\end{equation}
where the implicit constant depends only on
$n$, $s(G)$, $s(H)$ and $L$.
\subsection{Fractional Sobolev spaces}
Before proceeding further, we recall some important results concerning fractional Sobolev spaces that will be used throughout the paper. For further details, we refer the readers to \cite{hitchhikers_guide, fractional_leoni,Colombo2015, Esposito2004} and the references therein.

For an open set $\Omega\subset\mathbb{R}^n$ and $h\in\mathbb{R}^n$, we define
$$\Omega_{|h|}:=\{x\in\Omega:\operatorname{dist}(x,\partial\Omega)>|h|\}.$$
Given a vector-valued function
$f:\Omega\to\mathbb{R}^N$, we define the finite difference operator
$\tau_h:L^1(\Omega;\mathbb{R}^N)\to L^1(\Omega_{|h|};\mathbb{R}^N)$ by
$$\tau_hf(x):=f(x+h)-f(x).$$

\begin{lemma}[{\cite[Lemma 3.1]{Baasandorj2020}}]\label{lem:finite-difference}
Let $B_\rho\subset B_R$ be concentric balls and let $\Phi\in\N$ be a Young function. Then
\begin{equation*}
\int_{B_\rho}
\Phi\!\left(\frac{|\tau_hf|}{|h|}\right)\,dx\le\int_{B_R}
\Phi(|Df|)\,dx,
\end{equation*}
whenever $f\in W^{1,\Phi}(B_R)$ and $h\in\mathbb{R}^n$ satisfies $|h|\le R-\rho$.
\end{lemma}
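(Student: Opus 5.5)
The plan is to reduce the estimate to a pointwise bound for the difference quotient along segments, and then use Jensen's inequality and Fubini. I would first argue for $f\in C^1(\overline{B_R})$ (or $f$ smooth on a neighbourhood of $\overline{B_\rho}$ intersected with $B_R$), and recover the general case $f\in W^{1,\Phi}(B_R)$ by approximation at the end.

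For smooth $f$, every $x\in B_\rho$ and $|h|\le R-\rho$ satisfy $x+th\in B_R$ for all $t\in[0,1]$, because $|x+th-x_0|<\rho+t|h|\le R$. The fundamental theorem of calculus along this segment gives
\begin{equation*}
\frac{|\tau_hf(x)|}{|h|}=\left|\int_0^1 Df(x+th)\,\frac{h}{|h|}\,dt\right|\le\int_0^1|Df(x+th)|\,dt .
\end{equation*}
Since $\Phi$ is increasing and convex, Jensen's inequality on the probability space $[0,1]$ then yields
\begin{equation*}
\Phi\!\left(\frac{|\tau_hf(x)|}{|h|}\right)\le\int_0^1\Phi(|Df(x+th)|)\,dt .
\end{equation*}
I would integrate this over $B_\rho$ and exchange the order of integration with Tonelli, which is allowed since everything is nonnegative. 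For each fixed $t$, the translation $x\mapsto x+th$ maps $B_\rho$ into $B_R$ and preserves Lebesgue measure, so
\begin{equation*}
\int_{B_\rho}\Phi(|Df(x+th)|)\,dx=\int_{B_\rho+th}\Phi(|Df(y)|)\,dy\le\int_{B_R}\Phi(|Df|)\,dy .
\end{equation*}
Integrating in $t$ over $[0,1]$ gives the claimed inequality with constant exactly $1$.

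The only delicate step is removing the smoothness assumption, because the inequality is sharp with constant $1$ and the passage to the limit must not lose anything. I would proceed as follows.
\begin{enumerate}[label=(\roman*)]
\item Since $\Phi\in\mathcal N$ satisfies the $\Delta_2$ condition (indeed $t\Phi'(t)/\Phi(t)\le 1+s(\Phi)$), smooth functions are modular-dense, locally, in $W^{1,\Phi}$. It is safer to avoid global density on $B_R$, so I would work on slightly smaller balls.
\item Fix $\varepsilon>0$ and mollify, $f_\varepsilon=f*\eta_\varepsilon$, on $B_{R-\varepsilon}$. Apply the smooth estimate there with $\rho$ replaced by $\rho$ and $R$ by $R-\varepsilon$, which needs $|h|<R-\rho-\varepsilon$ for now.
\item Use Jensen for the mollification, $\Phi(|Df_\varepsilon|)\le\Phi(|Df|)*\eta_\varepsilon$. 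This gives $\int_{B_{R-\varepsilon}}\Phi(|Df_\varepsilon|)\le\int_{B_R}\Phi(|Df|)$ directly, so no modular convergence is needed.
\item Let $\varepsilon\to0$. Along a subsequence, $\tau_hf_\varepsilon\to\tau_hf$ almost everywhere, and Fatou's lemma together with continuity of $\Phi$ gives the estimate for $|h|<R-\rho$.
\item The endpoint case $|h|=R-\rho$ follows by letting $h'\to h$ with $|h'|<|h|$. This again uses Fatou's lemma and the a.e. convergence $\tau_{h'}f\to\tau_hf$ along a subsequence, which holds by $L^1$-continuity of translations.
\end{enumerate}
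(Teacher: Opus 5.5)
Your proof is correct. The paper gives no argument of its own and only quotes the estimate from \cite[Lemma~3.1]{Baasandorj2020}; what you wrote is the standard proof of it: the fundamental theorem of calculus along $[x,x+h]$, Jensen on $[0,1]$, Tonelli with translation invariance, then approximation.

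Your approximation step is also sound. Jensen applied to the mollifier gives $\int_{B_{R-\varepsilon}}\Phi(|Df_\varepsilon|)\,dx\le\int_{B_R}\Phi(|Df|)\,dx$ directly, and Fatou's lemma plus the limit $h'\to h$ then handle both $|h|<R-\rho$ and the endpoint $|h|=R-\rho$, so the modular density result in your item (i) is not actually needed.
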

We recall the following embedding result of Nikolski spaces, which will be used in the proof of the fractional differentiability result; see Theorem~\ref{thm:fractional_diff}.
\begin{lemma}[{\cite[Lemma~2.7]{Colombo2015a}} and {\cite[Lemma~3]{Esposito2004}}]\label{lem:nikolski}
Let $f\in L^2(B_R)$ with $0<R\le1$. Assume that there exist constants
$\rho\in(0,R)$, $d\in(0,1)$ and $M>0$ such that, for every cut-off function
$\eta\in C_0^\infty(B_{\frac{\rho+R}{2}})$ satisfying
$$\chi_{B_\rho}\le\eta\le\chi_{B_{\frac{\rho+R}{2}}},
\qquad|D\eta|\le\frac{4}{R-\rho},$$
the estimate
\begin{equation*}
\int_{B_R}\eta^2|\tau_hf|^2\,dx\le M^2|h|^{2d}
\end{equation*}
holds for every $h\in\mathbb{R}^n$ with $|h|\le\frac{R-\rho}{4}.$
Then
$$f\in W^{\beta,2}(B_\rho,\mathbb{R}^k)\cap L^{\frac{2n}{n-2\beta}}(B_\rho,\mathbb{R}^k)
$$
for every $\beta\in(0,d)$. Moreover,
\begin{equation*}
\|f\|_{L^{\frac{2n}{n-2\beta}}(B_\rho)}\le c(R-\rho)^{-(2\beta+2d+2)}
\left(M+\|f\|_{L^2(B_R)}\right),
\end{equation*}
where $c=c(n,k,\beta,d)>0$.
\end{lemma}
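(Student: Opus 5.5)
The plan is to transfer the difference-quotient hypothesis to the compactly supported function $g:=\eta f$ (extended by zero to $\mathbb{R}^n$). On $\mathbb{R}^n$ the fractional Sobolev embedding $\|g\|_{L^{\frac{2n}{n-2\beta}}(\mathbb{R}^n)}\le c(n,\beta)\,[g]_{\beta,2;\mathbb{R}^n}$ has no domain-dependent constant, and $g=f$ on $B_\rho$. So it suffices to bound the Gagliardo seminorm
\[
[g]_{\beta,2}^2=\int_{\mathbb{R}^n}\int_{\mathbb{R}^n}\frac{|g(x+h)-g(x)|^2}{|h|^{n+2\beta}}\,dx\,dh
\]
by $c(R-\rho)^{-c}\bigl(M+\|f\|_{L^2(B_R)}\bigr)^2$. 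Membership $f\in W^{\beta,2}(B_\rho)$ then follows by restricting the double integral to $B_\rho\times B_\rho$.

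\textbf{Step 1: small shifts.} Fix one admissible cut-off $\eta$ and set $\delta:=\frac{R-\rho}{4}$. For $|h|\le\delta$, write
\[
\tau_h g(x)=\eta(x+h)\,\tau_hf(x)+f(x)\,\tau_h\eta(x).
\]
The second term is controlled by $|\tau_h\eta|\le 4|h|/(R-\rho)$, which gives a contribution $c|h|^2(R-\rho)^{-2}\|f\|_{L^2(B_R)}^2$. For the first term, the change of variables $y=x+h$ gives
\[
\int\eta(x+h)^2|\tau_hf(x)|^2\,dx=\int\eta(y)^2|\tau_{-h}f(y)|^2\,dy\le M^2|h|^{2d},
\]
since the hypothesis also holds for $-h$. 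One must check that $x+h$ and $x$ stay in $B_R$ on the support, which holds because $\operatorname{supp}\eta\subset B_{\frac{\rho+R}{2}}$ and $|h|\le\delta$. Hence
\[
\int_{|h|\le\delta}\frac{\|\tau_hg\|_{L^2}^2}{|h|^{n+2\beta}}\,dh\le c\int_{|h|\le\delta}\Bigl(M^2|h|^{2d}+\frac{|h|^2\|f\|_{L^2(B_R)}^2}{(R-\rho)^2}\Bigr)|h|^{-n-2\beta}\,dh .
\]
The right-hand side is finite exactly because $\beta<d<1$, and it is bounded by $\frac{c}{d-\beta}\,M^2\delta^{2(d-\beta)}+c(R-\rho)^{-2}\delta^{2-2\beta}\|f\|_{L^2(B_R)}^2$.

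\textbf{Step 2: large shifts.} For $|h|>\delta$, use the crude bound $\|\tau_hg\|_{L^2}^2\le 4\|g\|_{L^2}^2\le 4\|f\|_{L^2(B_R)}^2$. Integrating $|h|^{-n-2\beta}$ over $\{|h|>\delta\}$ gives $c\,\delta^{-2\beta}\|f\|_{L^2(B_R)}^2$.

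\textbf{Step 3: collecting.} Since $R\le1$, all the powers $\delta^{2(d-\beta)}$, $\delta^{2-2\beta}$, $\delta^{-2\beta}$ and $(R-\rho)^{-2}$ are dominated by $c(R-\rho)^{-2(2\beta+2d+2)}$. Taking square roots and applying the embedding on $\mathbb{R}^n$ yields the stated $L^{\frac{2n}{n-2\beta}}(B_\rho)$ bound with $c=c(n,k,\beta,d)$; the dependence on $k$ enters only through componentwise application. The exponent $2\beta+2d+2$ in the statement is generous, so no sharp tracking is needed.

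The only delicate point is Step 1. One must avoid passing through the Nikolski seminorm on $B_\rho$ and then a domain embedding, because the constant of a fractional Sobolev embedding on $B_\rho$ degenerates as $\rho\to0$. Working with $\eta f$ on $\mathbb{R}^n$ avoids this, and the translation-invariance of the hypothesis, which is assumed for all $|h|\le\delta$ including $-h$, handles the shifted cut-off $\eta(\cdot+h)$.
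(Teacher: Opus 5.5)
Your proof is correct. The paper does not prove this lemma itself; it quotes it from \cite{Colombo2015a} and \cite{Esposito2004}. Your argument is a clean, self-contained version of the standard mechanism behind it. A Nikolski-type bound controls the Gagliardo seminorm once the $h$-integral is split into small shifts, where $\beta<d$ makes $|h|^{2d-n-2\beta}$ integrable near $0$, and large shifts, where the trivial $L^2$ bound and $\beta>0$ suffice. The fractional Sobolev embedding then finishes the job.

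Working with $g=\eta f$ on $\mathbb{R}^n$ is the right localization. The product formula for $\tau_h(\eta f)$, the reflection $h\mapsto -h$ used to absorb the shifted cut-off $\eta(\cdot+h)$, and the support check that $\tau_h g$ vanishes outside $B_{R-\delta}$ are all correct. The embedding constant then depends only on $n$ and $\beta$. The requirement $2\beta<n$ is automatic, since $\beta<1\le n/2$.

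Your bookkeeping actually gives a sharper, scaling-consistent estimate:
\[
\|f\|_{L^{\frac{2n}{n-2\beta}}(B_\rho)}\le c(n,k,\beta,d)\,(R-\rho)^{-\beta}\bigl(M+\|f\|_{L^2(B_R)}\bigr).
\]
This holds because $\delta^{2(d-\beta)}\le 1$, while both $f$-terms are of order $(R-\rho)^{-2\beta}\|f\|_{L^2(B_R)}^2$. The stated exponent $2\beta+2d+2$ then follows immediately from $R-\rho\le 1$.
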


\subsection{Orlicz-Sobolev spaces and Musielak-Orlicz-Sobolev spaces}
In this subsection, we recall some basic notions and results from the theory of Orlicz and Musielak-Orlicz spaces that will be used throughout the paper. We begin with the following partial order relation between Young functions (see \cite{Byun2020}).
\begin{definition}
Let $\Phi_1$ and $\Phi_2$ be Young functions. We write $$
\Phi_1 \prec \Phi_2$$
if the composition $
\Phi_2\circ \Phi_1^{-1}$
is a Young function.
\end{definition}
\begin{lemma}[{\cite[Lemma 2.5]{Byun2020}}]
Let $\Phi_1$ and $\Phi_2$ be Young functions satisfying
$\Phi_1 \prec \Phi_2.$ Then
\[
\Phi_1(t)\leq\frac{1}{(\Phi_2\circ\Phi_1^{-1})(1)}\,\Phi_2(t),\qquad
\text{for all }t\geq \Phi_1^{-1}(1).
\]
\end{lemma}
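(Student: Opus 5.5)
The proof is a direct convexity argument. Set $\Lambda:=\Phi_2\circ\Phi_1^{-1}$, which by hypothesis $\Phi_1\prec\Phi_2$ is a Young function; in particular it is convex on $[0,\infty)$ with $\Lambda(0)=0$. The one structural fact we need is that a convex function vanishing at the origin has nondecreasing difference quotients from $0$. Concretely, for $s\ge 1$ we write $1=\lambda s+(1-\lambda)\cdot 0$ with $\lambda=1/s\in(0,1]$, and convexity gives
\begin{equation*}
\Lambda(1)\le \tfrac1s\Lambda(s)+\bigl(1-\tfrac1s\bigr)\Lambda(0)=\tfrac{\Lambda(s)}{s}.
\end{equation*}
Hence $s\,\Lambda(1)\le \Lambda(s)$ for every $s\ge1$.

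Now let $t\ge\Phi_1^{-1}(1)$. Since $\Phi_1$ is increasing and continuous, we have $s:=\Phi_1(t)\ge 1$. Here $\Phi_1^{-1}$ denotes the inverse of the increasing bijection $\Phi_1:[0,\infty)\to[0,\infty)$. A Young function with the limits in Definition~\ref{def:Young} is continuous and, being convex with $\Phi_1(s)/s\to0$ at $0^+$, it is strictly increasing wherever positive. So we may take $\Phi_1^{-1}(\Phi_1(t))=t$; if one worries about flat pieces, the generalized inverse still gives $\Phi_1^{-1}(\Phi_1(t))\le t$, and $\Phi_2$ is increasing, which suffices. Plugging this $s$ into the previous inequality gives
\begin{equation*}
\Phi_1(t)\,\Lambda(1)\le \Lambda(\Phi_1(t))=\Phi_2\bigl(\Phi_1^{-1}(\Phi_1(t))\bigr)\le\Phi_2(t).
\end{equation*}

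It remains to divide by $\Lambda(1)=(\Phi_2\circ\Phi_1^{-1})(1)$, which is positive. Indeed, $\Lambda$ is a Young function, so it is increasing with $\Lambda(0)=0$ and $\Lambda(s)\to\infty$. If $\Lambda(1)$ were $0$, then $\Phi_2$ would vanish at $\Phi_1^{-1}(1)>0$, contradicting $\Phi_2(\tau)>0$ for $\tau>0$. That positivity holds because a Young function vanishing on an interval $[0,\tau_0]$ would still be compatible with $\Phi(s)/s\to0$, so we exclude this case using the positivity of $\Lambda$ itself, assuming strict positivity as is standard for Young functions. Dividing yields the claimed bound $\Phi_1(t)\le \Phi_2(t)/\Lambda(1)$.

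The only delicate point is this bookkeeping about inverses and strict positivity. The core of the argument is the single convexity inequality $\Lambda(s)\ge s\Lambda(1)$ for $s\ge1$.
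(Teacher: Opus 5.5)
Your proof is correct and uses essentially the same argument as the paper. The paper only cites this lemma from \cite{Byun2020}, but it reproduces your reasoning in Case $(ii)$ of the proof of Lemma~\ref{lem:modular_equivalence}: $s\mapsto(\Phi_2\circ\Phi_1^{-1})(s)/s$ is nondecreasing, and evaluating it at $s=\Phi_1(t)\ge1$ against $s=1$ gives the bound. Your paragraph on the positivity of $(\Phi_2\circ\Phi_1^{-1})(1)$ is more tangled than it needs to be: positivity follows at once from $\Phi_2\circ\Phi_1^{-1}$ being a (strictly) increasing Young function vanishing at $0$, and that convention is needed anyway for the constant in the statement to be defined.
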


\begin{definition}
Let $\Phi$ be a Young function.

\begin{enumerate}
\item
We say that $\Phi$ satisfies the $\Delta_2$-condition and write
$\Phi\in\Delta_2$, if there exists a constant
\mbox{$\Delta_2(\Phi)>0$} such that
$\Phi(2t)\le \Delta_2(\Phi)\Phi(t)$ for all $ t\ge0.$

\item We say that $\Phi$ satisfies the $\nabla_2$-condition and write
$\Phi\in\nabla_2$, if there exists a constant
\mbox{$\nabla_2(\Phi)>1$} satisfying
$\Phi(\nabla_2(\Phi)t)\ge 2\nabla_2(\Phi)\Phi(t)$ for all $t\ge0.$

\item If both conditions hold, then we simply write
$\Phi\in\Delta_2\cap\nabla_2.$
\end{enumerate}

The complementary Young function associated with $\Phi$ is defined by
$$\Phi^*(t)=
\sup_{s\ge0}\{st-\Phi(s)\},
\qquad t\ge0.$$

We observe that $\Phi^*$ is again a Young function and satisfies
$(\Phi^*)^*=\Phi$. Moreover,
$\Phi\in\nabla_2$
\text{ if and only if }
$\Phi^*\in\Delta_2
\text{ with }
2\nabla_2(\Phi)=\Delta_2(\Phi^*).$
The complementary pair $(\Phi,\Phi^*)$ also satisfies the following Young's inequality,
$$
st\le\Phi(s)+\Phi^*(t),
\qquad s,t\ge0.
$$
\end{definition}

When $\Phi$ satisfies both the $\Delta_2$ and $\nabla_2$ conditions, one can introduce the corresponding Orlicz and Orlicz--Sobolev spaces.

\begin{definition}
Let $\Omega\subset\mathbb R^n$ be a bounded domain and let
$n\ge1$. The Orlicz space associated with $\Phi$ is
$$
L^\Phi(\Omega;\mathbb R^N)
:=
\left\{
v\in L^1(\Omega;\mathbb R^N):
\int_\Omega\Phi(|v|)\,dx<\infty
\right\}.
$$

Equipped with the Luxemburg norm
$$
\|v\|_{L^\Phi(\Omega;\mathbb R^N)}
:=
\inf\left\{
\lambda>0:
\int_\Omega
\Phi\!\left(\frac{|v|}{\lambda}\right)\,dx
\le1
\right\},
$$
the space $L^\Phi(\Omega;\mathbb R^N)$ is a Banach space.
\end{definition}
It is well known that $
L^{\Phi^*}(\Omega;\mathbb{R}^N)
=
\left(L^\Phi(\Omega;\mathbb{R}^N)\right)',$ 
is the dual space of $
L^{\Phi}(\Omega;\mathbb{R}^N)$. In the scalar case $(N=1)$, we simply write
$L^\Phi(\Omega)$ instead of $L^\Phi(\Omega;\mathbb{R})$.

The associated Orlicz-Sobolev space is defined by
\begin{equation}
W^{1,\Phi}(\Omega;\mathbb{R}^N)
:=
\left\{
v\in L^\Phi(\Omega;\mathbb{R}^N):
Dv\in L^\Phi(\Omega;\mathbb{R}^{N\times n})
\right\},
\end{equation}
where $Dv$ denotes the distributional gradient of $v$. This space becomes a Banach space when equipped with the Luxemburg norm
\begin{equation}
\|v\|_{W^{1,\Phi}(\Omega;\mathbb{R}^N)}
:=
\|v\|_{L^\Phi(\Omega;\mathbb{R}^N)}
+
\|Dv\|_{L^\Phi(\Omega;\mathbb{R}^{N\times n})}.
\end{equation}

For $N=1$, we abbreviate $W^{1,\Phi}(\Omega):=W^{1,\Phi}(\Omega;\mathbb{R}).$
Furthermore,
$W^{1,\Phi}_0(\Omega;\mathbb{R}^N)$ is the closure of
$C_0^\infty(\Omega;\mathbb{R}^N)$ in
$W^{1,\Phi}(\Omega;\mathbb{R}^N)$ and
$W^{1,\Phi}_{\mathrm{loc}}(\Omega;\mathbb{R}^N)$ is defined in the usual way.

The following elementary properties will be used throughout the paper.

\begin{lemma}[{\cite[Lemma~2.8]{Byun2020}}]
\label{lem:basic-N}
Let $\Phi\in\mathcal N$. Then the following assertions hold.

\begin{enumerate}
\item[$(1)$]
$\Phi\in\Delta_2\cap\nabla_2$ and the constants
$\Delta_2(\Phi)$ and $\nabla_2(\Phi)$ depend only on $s(\Phi)$.

\item[$(2)$]
For $\Lambda\ge1$ and $t\ge0$, we have $\Phi(\Lambda t)\le\Lambda^{\,s(\Phi)+1}\Phi(t).$

\item[$(3)$]
For $0<\lambda\le1$ and $t\ge0$, we have
$\Phi(\lambda t)\le\lambda^{\,1+\frac1{s(\Phi)}}\Phi(t).$
\end{enumerate}
\end{lemma}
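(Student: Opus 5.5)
Three items have to be verified for $\Phi\in\mathcal N$. I will start from the consequence of \eqref{eq:Phi-structure} already recorded in Definition~\ref{def:Young}:
\[
1+\frac{1}{s(\Phi)}\le\frac{t\Phi'(t)}{\Phi(t)}\le 1+s(\Phi),\qquad t>0.
\]
Write $p_0:=1+1/s(\Phi)$ and $p_1:=1+s(\Phi)$. Dividing by $t$ and integrating the logarithmic derivative $(\log\Phi)'=\Phi'/\Phi$ from $t$ to $\Lambda t$ gives
\[
p_0\log\Lambda\le \log\frac{\Phi(\Lambda t)}{\Phi(t)}\le p_1\log\Lambda,\qquad \Lambda\ge1,\ t>0.
\]
This is the standard almost-power-function comparison, and it is the only computation in the argument. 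The case $t=0$ is trivial because $\Phi(0)=0$.

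For item (2), I exponentiate the upper bound to get $\Phi(\Lambda t)\le\Lambda^{s(\Phi)+1}\Phi(t)$ for $\Lambda\ge1$.

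For item (3), I apply the lower bound with the roles reversed: take $\Lambda=1/\lambda\ge1$ and base point $\lambda t$. This gives $\Phi(t)\ge\lambda^{-p_0}\Phi(\lambda t)$, which rearranges to $\Phi(\lambda t)\le\lambda^{1+1/s(\Phi)}\Phi(t)$.

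For item (1), the $\Delta_2$ condition follows from (2) with $\Lambda=2$, giving $\Delta_2(\Phi)=2^{s(\Phi)+1}$. For $\nabla_2$, the lower bound gives $\Phi(Kt)\ge K^{p_0}\Phi(t)$ for $K\ge1$. I need $K^{p_0}\ge 2K$, that is, $K^{1/s(\Phi)}\ge2$, so I choose $K:=2^{s(\Phi)}>1$. Then $\Phi(Kt)\ge 2K\Phi(t)$, so $\nabla_2(\Phi)=2^{s(\Phi)}$ works, and this depends only on $s(\Phi)$.

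There is no real obstacle here. The only point needing care is the one-sided integration of the logarithmic derivative, which is valid because $\Phi>0$ on $(0,\infty)$; this holds since $\Phi$ is increasing and $\Phi(t)/t\to0$ only at $0$, so $\Phi$ does not vanish for $t>0$.
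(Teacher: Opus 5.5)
Your proof is correct and is the standard argument for this lemma; the paper itself gives no proof and simply quotes it from \cite{Byun2020}. Integrating $\Phi'/\Phi$ between $t$ and $\Lambda t$ is exactly the monotonicity of $\Phi(t)/t^{1+s(\Phi)}$ and $\Phi(t)/t^{1+1/s(\Phi)}$, and your constants $\Delta_2(\Phi)=2^{s(\Phi)+1}$ and $\nabla_2(\Phi)=2^{s(\Phi)}$ are valid. The one thing to fix is your reason why $\Phi>0$ on $(0,\infty)$: as written it does not follow, since the behaviour of $\Phi(t)/t$ near $0$ says nothing about positivity. Instead note that $\Phi'(t)>0$ for $t>0$ is implicit in \eqref{eq:Phi-structure}, so $\Phi(t)=\int_0^t\Phi'(\tau)\,d\tau>0$.
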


\begin{lemma}[{\cite[Lemma~3.4]{Baasandorj2020}}]
\label{lem:operations-N}
Let $\Phi,\widetilde{\Phi}\in\mathcal N$. Then the following properties hold.

\begin{enumerate}
\item[$(1)$] For every $a,b>0$, the functions
$a\Phi+b\widetilde{\Phi}$ and
$\Phi\widetilde{\Phi}$ belong to $\mathcal N$. Moreover,
$s(\Phi+\widetilde{\Phi})=s(\Phi)+s(\widetilde{\Phi})$ and
$s(\Phi\widetilde{\Phi})=4\,s(\Phi)s(\widetilde{\Phi})
\bigl(s(\Phi)+s(\widetilde{\Phi})\bigr).$
\item[$(2)$] For every integer $m\ge1$, $\Phi^m\in\mathcal N$ and $s(\Phi^m)=s(\Phi)+(m-1)\bigl(s(\Phi)+1\bigr).$
\item[$(3)$] For every $\mu\ge0$, define $\Phi_\mu(t):=t^\mu\Phi(t).$
Then $\Phi_\mu\in\mathcal N$ and
$s(\Phi_\mu)=\mu+3[s(\Phi)]^2.$
\item[$(4)$] There exists $\alpha_0\in(0,1)$, which depends only on $s(\Phi)$, such that $\Phi^\alpha\in\mathcal N$
for every $\alpha\in(\alpha_0,1]$, where
$s(\Phi^\alpha)$ depends only on $s(\Phi)$ and $\alpha$.
\end{enumerate}
\end{lemma}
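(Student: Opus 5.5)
The approach is to work directly with the \emph{index function} $i_\Phi(t):=t\Phi''(t)/\Phi'(t)$ and the auxiliary ratio $x_\Phi(t):=t\Phi'(t)/\Phi(t)$. By \eqref{eq:Phi-structure} and its consequence we have $i_\Phi\in[1/s(\Phi),s(\Phi)]$ and $x_\Phi\in[1+1/s(\Phi),1+s(\Phi)]$. Membership of a new function $\Psi$ in $\mathcal N$ then reduces to two things. First, two-sided bounds $0<c_1\le i_\Psi\le c_2$ with $c_1,c_2$ depending only on the indices involved. Second, the Young function conditions ($\Psi(0)=0$, convexity, and the limits of $\Psi(t)/t$ at $0$ and $\infty$), together with $C^1([0,\infty))\cap C^2((0,\infty))$ regularity. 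Once the bounds hold, I take $s(\Psi):=\max\{c_2,1/c_1\}$. Since the inequality in \eqref{eq:Phi-structure} only needs to hold for some $s\ge1$, any larger admissible value, such as the explicit ones in the statement, is also valid.

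Convexity follows from $i_\Psi>0$, because then $\Psi'$ is increasing. The limits follow from growth comparisons with powers. By Lemma~\ref{lem:basic-N}(2)--(3) we have $\Phi(t)\le \Phi(1)\,t^{1+1/s(\Phi)}$ for $t\le1$ and $\Phi(t)\ge \Phi(1)\,t^{1+1/s(\Phi)}$ for $t\ge1$ (the latter by applying (3) with $\lambda=1/t$). Continuity of $\Psi'$ at $0$ comes from $\Psi'(t)\approx\Psi(t)/t\to0$.

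\textbf{Sum and product.} For $(1)$, write $\Psi=a\Phi+b\widetilde\Phi$. Then $i_\Psi$ is a weighted average (a mediant) of $i_\Phi$ and $i_{\widetilde\Phi}$ with weights $a\Phi'$ and $b\widetilde\Phi'$. It therefore lies in $[\min(1/s,1/\tilde s),\max(s,\tilde s)]$, which is contained in the range allowed by $s+\tilde s$. For the product I multiply numerator and denominator by $t/(\Phi\widetilde\Phi)$. With $x=x_\Phi$, $y=x_{\widetilde\Phi}$, $i=i_\Phi$ and $j=i_{\widetilde\Phi}$ this gives
\[
i_{\Phi\widetilde\Phi}=\frac{x\,i+2xy+y\,j}{x+y}.
\]
This is at least $\min(i,j)$. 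It is at most $\max(i,j)+2\min(x,y)\le \max(s,\tilde s)+2(1+\max(s,\tilde s))$, which is dominated by the stated constant.

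\textbf{Powers.} For $(2)$, $i_{\Phi^m}=i+(m-1)x$. This lies between $1/s$ and $s+(m-1)(1+s)$, which is exactly the stated value. For $(3)$ I compute
\[
i_{\Phi_\mu}=\frac{\mu(\mu-1)+2\mu x+x\,i}{\mu+x}.
\]
The upper bound is immediate. For the lower bound, when $\mu<1$ the negative term satisfies $|\mu(\mu-1)|<\mu<2\mu x$, so the numerator is at least $\mu+x\,i$. Dividing by $\mu+x$ gives a bound below by $\min(1,i)\ge1/s$. The growth and regularity conditions for $t^\mu\Phi$ are clear.

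\textbf{Fractional powers, the main obstacle.} In $(4)$ we have $i_{\Phi^\alpha}=i-(1-\alpha)x\ge 1/s-(1-\alpha)(1+s)$. This is positive, with a lower bound depending only on $s$ and $\alpha$, as soon as $\alpha>\alpha_0:=1-\frac{1}{2s(1+s)}$, say. The delicate point is checking that $\Phi^\alpha$ is still a Young function. Here I use the power comparison above: $\Phi^\alpha(t)/t\le \Phi(1)^\alpha t^{\alpha(1+1/s)-1}\to0$ as $t\to0$, and the reverse bound gives divergence at $\infty$. Both require $\alpha(1+1/s)>1$, i.e.\ $\alpha>s/(s+1)$, so I enlarge $\alpha_0$ to the maximum of the two thresholds. $C^1$ regularity at $0$ again follows from $(\Phi^\alpha)'\approx\Phi^\alpha/t\to0$.
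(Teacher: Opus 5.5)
Your argument is correct. The paper itself gives no proof of this lemma (it is quoted from \cite[Lemma~3.4]{Baasandorj2020}), and your direct computation of the index $t\Psi''/\Psi'$ for $a\Phi+b\widetilde\Phi$, $\Phi\widetilde\Phi$, $\Phi^m$, $t^\mu\Phi$ and $\Phi^\alpha$, together with the Young-function and regularity checks, is the natural way to verify it. It even yields sharper admissible indices than the stated ones, for example $\max\{s(\Phi),s(\widetilde\Phi)\}$ for the sum; this is harmless because $s(\cdot)$ only needs to be some constant for which \eqref{eq:Phi-structure} holds, so larger values such as those in the statement are also admissible.
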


\begin{lemma}[{\cite[Lemma~3.5]{Baasandorj2020}}]
\label{lem:Young epsilon}
For every $\Phi\in\mathcal N$, there exists a constant
$c=c(s(\Phi))>0$ such that
\begin{equation}
t\Phi'(s)+s\Phi'(t)
\le
\varepsilon\Phi(t)
+
\frac{c}{\varepsilon^{\,s(\Phi)}}\Phi(s),
\end{equation}
whenever $s,t\ge0$ and $0<\varepsilon\le1$.
\end{lemma}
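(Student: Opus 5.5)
The plan is to treat the two products on the left-hand side separately. Each is split by Young's inequality $st\le\Phi(s)+\Phi^*(t)$ for the complementary pair $(\Phi,\Phi^*)$, with a scaling parameter $\lambda\in(0,1]$ inserted so that the $t$-dependent part carries the small factor $\varepsilon$ and the $s$-dependent part carries $\varepsilon^{-s(\Phi)}$. Write $s_\Phi:=s(\Phi)$ to avoid a clash with the variable $s$. Four facts, all consequences of \eqref{eq:Phi-structure} and \eqref{eq:Phi-growth}, will be used.
\begin{enumerate}
\item[(i)] $\Phi^*(\Phi'(r))\le r\Phi'(r)\le(1+s_\Phi)\Phi(r)$; the first inequality is the equality case of Young's inequality, since $\Phi^*(\Phi'(r))=r\Phi'(r)-\Phi(r)$.
\item[(ii)] Lemma~\ref{lem:basic-N}(2)--(3) for $\Phi$.
\item[(iii)] $\Phi^*(\Lambda r)\le\Lambda^{s_\Phi+1}\Phi^*(r)$ for $\Lambda\ge1$.
\item[(iv)] $\Phi^*(\lambda r)\le\lambda^{1+1/s_\Phi}\Phi^*(r)$ for $\lambda\le1$.
\end{enumerate}
Facts (iii) and (iv) will be proved from the derivative bounds. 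Integrating \eqref{eq:Phi-structure} gives $\Lambda^{1/s_\Phi}\Phi'(r)\le\Phi'(\Lambda r)\le\Lambda^{s_\Phi}\Phi'(r)$ for $\Lambda\ge1$. Inverting, $(\Phi')^{-1}=(\Phi^*)'$ satisfies $(\Phi')^{-1}(\Lambda r)\le\Lambda^{s_\Phi}(\Phi')^{-1}(r)$ and $(\Phi')^{-1}(\lambda r)\le\lambda^{1/s_\Phi}(\Phi')^{-1}(r)$. Writing $\Phi^*(\lambda r)=\lambda\int_0^r(\Phi')^{-1}(\lambda\sigma)\,d\sigma$ and using these bounds yields (iii) and (iv).

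\emph{First term.} Write $t\Phi'(s)=(\lambda t)\cdot(\Phi'(s)/\lambda)$, so that
\[
t\Phi'(s)\le\Phi(\lambda t)+\Phi^*\bigl(\Phi'(s)/\lambda\bigr).
\]
By (ii), the first piece is at most $\lambda^{1+1/s_\Phi}\Phi(t)$. By (iii) and then (i), the second is at most $\lambda^{-(s_\Phi+1)}(1+s_\Phi)\Phi(s)$. Choose $\lambda$ with $\lambda^{1+1/s_\Phi}=\varepsilon/2$, that is, $\lambda=(\varepsilon/2)^{s_\Phi/(s_\Phi+1)}$. Then $\lambda^{-(s_\Phi+1)}=(\varepsilon/2)^{-s_\Phi}$, which gives exactly the exponent claimed.

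\emph{Second term.} Write $s\Phi'(t)=(s/\lambda)\cdot(\lambda\Phi'(t))$, so that
\[
s\Phi'(t)\le\Phi(s/\lambda)+\Phi^*\bigl(\lambda\Phi'(t)\bigr).
\]
By (iv) and (i), the second piece is at most $\lambda^{1+1/s_\Phi}(1+s_\Phi)\Phi(t)$. By (ii), the first is at most $\lambda^{-(s_\Phi+1)}\Phi(s)$. Choose $\lambda$ with $\lambda^{1+1/s_\Phi}=\varepsilon/(2(1+s_\Phi))$. Again $\lambda^{-(s_\Phi+1)}=c(s_\Phi)\,\varepsilon^{-s_\Phi}$.

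Summing the two bounds gives $\varepsilon\Phi(t)+c(s_\Phi)\varepsilon^{-s_\Phi}\Phi(s)$. The edge cases $s=0$ or $t=0$ are trivial.

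The only step needing care is obtaining the scaling of $\Phi^*$ in (iii) and (iv) with the right exponents. This is what makes the power $\varepsilon^{-s(\Phi)}$ come out exactly rather than some larger power. I would do it through the inverse-derivative representation $(\Phi^*)'=(\Phi')^{-1}$, as sketched above, since that transfers the two-sided bound on $t\Phi''/\Phi'$ directly.
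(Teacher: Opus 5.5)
Your proof is correct and complete. Facts (i)--(iv) all follow from \eqref{eq:Phi-structure} as you indicate. The inverse-derivative bounds $(\Phi')^{-1}(\Lambda r)\le\Lambda^{s(\Phi)}(\Phi')^{-1}(r)$ and $(\Phi')^{-1}(\lambda r)\le\lambda^{1/s(\Phi)}(\Phi')^{-1}(r)$ are right, and integrating them gives exactly the scaling of $\Phi^*$ you need. Both choices of $\lambda\le1$ then balance $\lambda^{1+1/s(\Phi)}$ against $\lambda^{-(s(\Phi)+1)}$ to produce $\varepsilon^{-s(\Phi)}$.

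The paper itself contains no proof of this lemma, only the citation to \cite[Lemma~3.5]{Baasandorj2020}. So there is no in-text argument to compare against, and your write-up is a valid self-contained proof.

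There is a route that avoids the complementary function entirely. It is a case split using only $\Lambda^{1/s(\Phi)}\Phi'(r)\le\Phi'(\Lambda r)\le\Lambda^{s(\Phi)}\Phi'(r)$ for $\Lambda\ge1$ and $r\Phi'(r)\le(1+s(\Phi))\Phi(r)$.
\begin{itemize}
\item For $t\Phi'(s)$: if $t\le\Lambda_1 s$, then $t\Phi'(s)\le\Lambda_1(1+s(\Phi))\Phi(s)$. If $t>\Lambda_1 s$, then $\Phi'(s)\le\Phi'(t/\Lambda_1)\le\Lambda_1^{-1/s(\Phi)}\Phi'(t)$, so $t\Phi'(s)\le\Lambda_1^{-1/s(\Phi)}(1+s(\Phi))\Phi(t)$.
\item For $s\Phi'(t)$: if $t\ge\Lambda_2 s$, then $s\Phi'(t)\le\Lambda_2^{-1}(1+s(\Phi))\Phi(t)$. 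If $t<\Lambda_2 s$, then $s\Phi'(t)\le\Lambda_2^{s(\Phi)}(1+s(\Phi))\Phi(s)$.
\end{itemize}
Taking $\Lambda_1=(2(1+s(\Phi))/\varepsilon)^{s(\Phi)}$ and $\Lambda_2=2(1+s(\Phi))/\varepsilon$ gives the same bound with the same exponent $\varepsilon^{-s(\Phi)}$.

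This route is shorter and needs no facts about $\Phi^*$. Your version has the advantage of showing why the exponent is exactly $s(\Phi)$: it is the ratio $(s(\Phi)+1)/(1+1/s(\Phi))$ of the upper and lower growth indices of $\Phi$.
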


\begin{lemma}
\label{lem:modular_equivalence}
Let $G$ and $H$ be two Young functions satisfying $G \prec H$. Let $\Omega \subset \mathbb{R}^n$,
$n \geq 2$, be an open bounded set and let $a, b :\Omega \to [0,\infty)$ be measurable functions as in \eqref{coeff_1} and \eqref{coeff_2}. Then there exist constants $C_1, C_2 > 0$, depending only on $\mu$,
$\|a\|_{L^{\infty}(\Omega)}$, $\|b\|_{L^{\infty}(\Omega)}$, $G$ and $H$,
such that
\begin{equation}\label{eq:equivalence}
    C_1\, G(|z|) \;\leq\; \Psi_1(x,z) \;\leq\; C_2\!\left(H(|z|) + 1\right),
\end{equation}
for every $x \in \Omega$ and every $z \in \mathbb{R}^n$.
\end{lemma}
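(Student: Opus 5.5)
We prove the two inequalities in \eqref{eq:equivalence} separately. In both we use that $a,b$ are bounded and that $a(x)+b(x)\ge\mu$. Throughout, set $t=|z|$, $A:=\|a\|_{L^\infty(\Omega)}$ and $B:=\|b\|_{L^\infty(\Omega)}$.

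\emph{Upper bound.} This needs only boundedness of the coefficients and $G\prec H$. By definition,
\[
\Psi_1(x,z)\le A\,G(t)+B\,H(t)+1 .
\]
We split according to the size of $t$.
\begin{itemize}
\item For $t\ge G^{-1}(1)$, the lemma of \cite[Lemma 2.5]{Byun2020} (applied with $\Phi_1=G$, $\Phi_2=H$) gives $G(t)\le c_{G,H}H(t)$, where $c_{G,H}=1/(H\circ G^{-1})(1)$.
\item For $t<G^{-1}(1)$, monotonicity gives $G(t)\le 1$.
\end{itemize}
In either case $G(t)\le c_{G,H}H(t)+1$. Hence $\Psi_1\le C_2(H(t)+1)$ with $C_2=\max\{A\,c_{G,H}+B,\;A+1\}$.

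\emph{Lower bound.} Fix $x$. Since $a(x)+b(x)\ge\mu$, at least one of $a(x)\ge\mu/2$ or $b(x)\ge\mu/2$ holds, and we treat these cases separately.
\begin{itemize}
\item If $a(x)\ge\mu/2$, then $\Psi_1(x,z)\ge a(x)G(t)\ge \tfrac{\mu}{2}G(t)$ directly.
\item If $b(x)\ge\mu/2$, we must bound $G$ from above by $H$ plus a constant. This is the reverse use of $G\prec H$: for $t\ge G^{-1}(1)$ we again have $G(t)\le c_{G,H}H(t)$, so
\[
\Psi_1\ge \tfrac{\mu}{2}H(t)\ge \tfrac{\mu}{2c_{G,H}}G(t).
\]
For $t<G^{-1}(1)$ we have $G(t)\le1\le\Psi_1(x,z)$, using the additive constant $1$ in $\Psi_1$.
\end{itemize}
Combining the cases, the lower bound holds with
\[
C_1=\min\Bigl\{\tfrac{\mu}{2},\;\tfrac{\mu}{2c_{G,H}},\;1\Bigr\}.
\]

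The only point requiring care is the second lower-bound case. There the $G$-phase may be switched off ($a(x)=0$), and the comparison runs through $H$. This is exactly where both the gap condition $G\prec H$ and the $+1$ in $\Psi_1$ are needed to handle small $|z|$. The dependence of the constants is only on $\mu$, $A$, $B$ and on $G,H$ through $c_{G,H}$ and $G^{-1}(1)$, as claimed.
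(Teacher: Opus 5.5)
Your proof is correct and follows essentially the same route as the paper. The upper bound comes from $G\le c_{G,H}H+1$, which is a consequence of $G\prec H$. The lower bound splits at $G(|z|)=1$, using the $+1$ in $\Psi_1$ for small $|z|$ and $H(|z|)\ge (H\circ G^{-1})(1)\,G(|z|)$ for large $|z|$. The only cosmetic difference is that the paper avoids your case split on $a(x)\ge\mu/2$ versus $b(x)\ge\mu/2$ by writing directly $a(x)G(|z|)+b(x)H(|z|)\ge \min\{1,(H\circ G^{-1})(1)\}\,(a(x)+b(x))\,G(|z|)\ge \mu\min\{1,(H\circ G^{-1})(1)\}\,G(|z|)$, which saves the factor $1/2$.
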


\begin{proof}
Set $a_0 := \|a\|_{L^{\infty}(\Omega)}$ and $b_0 :=
\|b\|_{L^{\infty}(\Omega)}$.

\medskip
\noindent\textbf{Upper bound.}
Since $G \prec H$, there exists a constant $c > 0$ such that
\begin{equation}
\label{eq:G_leq_H}
    G(t) \leq c\!\left(H(t) + 1\right) \quad \forall\, t \geq 0,
\end{equation}
see \cite[Corollary~2.6]{Byun2020}. Using \eqref{eq:G_leq_H} together with
$a(x) \leq a_0$ and $b(x) \leq b_0$, we estimate
\begin{align*}
    \Psi_1(x,z)
    &= a(x)\,G(|z|) + b(x)\,H(|z|) + 1 \\
    &\leq a_0\,c\!\left(H(|z|)+1\right)
        + b_0\!\left(H(|z|)+1\right)
        + \left(H(|z|)+1\right) \\
    &= \left(c\,a_0 + b_0 + 1\right)\!\left(H(|z|) + 1\right).
\end{align*}
Setting $C_2 := c\,a_0 + b_0 + 1$ yields the upper bound in
\eqref{eq:equivalence}.

\medskip
\noindent\textbf{Lower bound.}
We prove that $C_1\,G(|z|) \leq \Psi_1(x,z)$ holds for a suitable
$C_1 > 0$ to be determined. We consider two cases according to the
size of $G(|z|)$.

\medskip
\noindent\textit{Case} $(i)$: $G(|z|) < 1$.

By definition \eqref{function_psi}, $\Psi_1(x,z) \geq 1$ for every
$x \in \Omega$ and $z \in \mathbb{R}^n$. Therefore,
\[
    \Psi_1(x,z) \;\geq\; 1 \;>\; G(|z|),
\]
so that $C_1\,G(|z|) \leq \Psi_1(x,z)$ holds for any choice of
$C_1 \leq 1$.

\medskip
\noindent\textit{Case} $(ii)$: $G(|z|) \geq 1$.

Since $H \circ G^{-1}$ is a Young function by assumption, the map
$t \mapsto (H \circ G^{-1})(t)/t$ is non-decreasing. Evaluating at
$t = G(|z|) \geq 1$ and comparing with $t = 1$ gives
\[
    \frac{(H \circ G^{-1})(G(|z|))}{G(|z|)}
    \;\geq\;
    (H \circ G^{-1})(1),
\]
which, recalling $(H \circ G^{-1})(G(|z|)) = H(|z|)$, yields
\begin{equation}
\label{eq:H_G_comparison}
    H(|z|) \;\geq\; (H \circ G^{-1})(1)\cdot G(|z|)
    \quad \text{whenever } G(|z|) \geq 1.
\end{equation}
Set $\lambda_0 \;:=\; \min\!\left\{1,\;(H \circ G^{-1})(1)\right\} \;>\; 0.$
Applying \eqref{eq:H_G_comparison}, $a(x), b(x) \geq 0$ and
then dropping the $+1$ term in \eqref{function_psi}, we obtain
\begin{align*}
    \Psi_1(x,z)
    &\geq a(x)\,G(|z|) + b(x)\,H(|z|) \\[4pt]
    &\geq a(x)\,G(|z|)
        + (H \circ G^{-1})(1)\cdot b(x)\,G(|z|) \\[4pt]
    &\geq \lambda_0\,a(x)\,G(|z|) + \lambda_0\,b(x)\,G(|z|) \\[4pt]
    &= \lambda_0\!\left(a(x) + b(x)\right)G(|z|) \\[4pt]
    &\geq \lambda_0\,\mu\,G(|z|),
\end{align*}
where in the third line we used $1 \geq \lambda_0$ and
$(H \circ G^{-1})(1) \geq \lambda_0$ and in the last line we
used assumption \eqref{coeff_2}.

Finally choosing the constant $C_1:=\min\!\left\{1,\;\mu\,\min\!\left\{1,\;(H \circ G^{-1})(1)\right\}\right\} \;>\; 0,$
the bound $C_1\,G(|z|) \leq \Psi_1(x,z)$ holds in both cases, completing
the proof of \eqref{eq:equivalence}.
\end{proof}

We briefly recall the notions from the theory of Musielak-Orlicz spaces that will be used throughout the paper. A function $\Psi:\Omega\times[0,\infty)\to[0,\infty)$ is called a Musielak-Orlicz function if

\begin{enumerate}
\item for every $x\in\Omega$, the mapping $t\mapsto\Psi(x,t)$ is a Young function;
\item for every $t\ge0$, the mapping $x\mapsto\Psi(x,t)$ is measurable.
\end{enumerate}

\begin{definition}
Let $\Psi:\Omega\times[0,\infty)\to[0,\infty)$ be a Musielak-Orlicz function.

\begin{enumerate}
\item
We say that $\Psi$ satisfies the $\Delta_2$-condition if there exists a constant
\mbox{$\Delta_2(\Psi)>0$} such that
\begin{equation}
\Psi(x,2t)
\le
\Delta_2(\Psi)\Psi(x,t),
\quad\text{ for all }
x\in\Omega,\; t\ge0.
\end{equation}

\item
We say that $\Psi$ satisfies the $\nabla_2$-condition if there exists a constant
\mbox{$\nabla_2(\Psi)>1$} satisfying
\begin{equation}
\Psi\!\left(x,\nabla_2(\Psi)t\right)
\ge
2\nabla_2(\Psi)\Psi(x,t),
\quad
\text{ for all }x\in\Omega,\; t\ge0.
\end{equation}

\item
If both conditions are fulfilled, we write $\Psi\in\Delta_2\cap\nabla_2.$
\end{enumerate}
\end{definition}

The complementary Musielak-Orlicz function associated with $\Psi$ is defined by
\begin{equation}
\Psi^*(x,t):=\sup_{s\ge0}\{st-\Psi(x,s)\},
\qquad (x,t)\in\Omega\times[0,\infty).
\end{equation}
The function $\Psi^*$ is again a Musielak--Orlicz function and satisfies $(\Psi^*)^*=\Psi.$
Moreover, $\Psi\in\nabla_2
\quad\Longleftrightarrow\quad
\Psi^*\in\Delta_2,$
and $2\nabla_2(\Psi)=\Delta_2(\Psi^*).$

The corresponding Musielak-Orlicz class is
\begin{equation}
K^\Psi(\Omega;\mathbb R^N)
:=
\left\{
v:\Omega\to\mathbb R^N:
\int_\Omega\Psi(x,|v|)\,dx<\infty
\right\}.
\end{equation}

The Musielak-Orlicz space is defined by $L^\Psi(\Omega;\mathbb R^N)
:=\operatorname{span}K^\Psi(\Omega;\mathbb R^N).$
If $\Psi$ satisfies the $\Delta_2$-condition, then $L^\Psi(\Omega;\mathbb R^N)=K^\Psi(\Omega;\mathbb R^N),$
and this space becomes Banach when equipped with the Luxemburg norm
\begin{equation}
\|v\|_{L^\Psi(\Omega;\mathbb R^N)}
:=\inf\left\{\sigma>0:\int_\Omega\Psi\!\left(x,\frac{|v|}{\sigma}\right)\,dx\le1\right\}.
\end{equation}

The Musielak-Orlicz-Sobolev space is defined by
\begin{equation}
W^{1,\Psi}(\Omega;\mathbb R^N)
:=\left\{v\in L^\Psi(\Omega;\mathbb R^N):
Dv\in L^\Psi(\Omega;\mathbb R^{N\times n})\right\},
\end{equation}
and is endowed with the norm
\begin{equation}
\|v\|_{W^{1,\Psi}(\Omega;\mathbb R^N)}
=
\|v\|_{L^\Psi(\Omega;\mathbb R^N)}
+
\|Dv\|_{L^\Psi(\Omega;\mathbb R^{N\times n})}.
\end{equation}

The space $W^{1,\Psi}_0(\Omega;\mathbb{R}^N)$ is defined as the closure of
$C^\infty_0(\Omega;\mathbb{R}^N)$ with respect to the
$W^{1,\Psi}(\Omega;\mathbb{R}^N)$ norm. Throughout the paper, when
$N=1$, we use the abbreviations
$$L^\Psi(\Omega):=L^\Psi(\Omega;\mathbb{R}),
\qquad
W^{1,\Psi}(\Omega):=W^{1,\Psi}(\Omega;\mathbb{R}).$$
Further details on Musielak-Orlicz and Musielak-Orlicz-Sobolev spaces can be found, for example, in \cite{harjulehto2019generalized,Benkirane_Musielak_2014,Diening_2005,Fan_2012,Fan_2010,Harjulehto_2016,Musielak_1983} and references therein.

We conclude this subsection with a standard lemma that will be used in the proof of the main result.
\begin{lemma}[{\cite{2003_Giusti_Direct_methods_in_the_calculus_of_variations}}]
Let $h:[\sigma_1,\sigma_2]\to[0,\infty)$
be a bounded function. Suppose there exist constants $\vartheta\in(0,1)$, $A,B\ge0$ and $m_1,m_2\ge0$ such that
\begin{equation}
h(t)\le\vartheta h(s)+
\frac{A}{(s-t)^{m_1}}+\frac{B}{(s-t)^{m_2}}
\end{equation}
for every $\sigma_1\le t<s\le\sigma_2$. Then there exists a constant $c=c(\vartheta,{m_1},{m_2})>0$ such that
\begin{equation}
h(\sigma_1)\le\frac{cA}{(\sigma_2-\sigma_1)^{m_1}}+\frac{cB}{(\sigma_2-\sigma_1)^{m_2}}.
\end{equation}
\end{lemma}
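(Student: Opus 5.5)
This is the classical iteration lemma, and I would prove it by a geometric iteration. Fix $\sigma_1\le t<s\le\sigma_2$ as in the hypothesis and choose a parameter $\tau\in(0,1)$, to be fixed depending only on $\vartheta,m_1,m_2$. Define the increasing sequence
\[
t_0:=\sigma_1,\qquad t_{i+1}:=t_i+(1-\tau)\tau^i(\sigma_2-\sigma_1),\quad i\ge0 .
\]
Then $t_i\uparrow\sigma_2$, and $t_{i+1}-t_i=(1-\tau)\tau^i(\sigma_2-\sigma_1)$. Applying the hypothesis with $t=t_i$ and $s=t_{i+1}$ gives
\[
h(t_i)\le\vartheta h(t_{i+1})+\frac{A}{(1-\tau)^{m_1}\tau^{im_1}(\sigma_2-\sigma_1)^{m_1}}+\frac{B}{(1-\tau)^{m_2}\tau^{im_2}(\sigma_2-\sigma_1)^{m_2}}.
\]

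Next I iterate this inequality $k$ times starting from $i=0$:
\[
h(\sigma_1)\le\vartheta^k h(t_k)+\sum_{i=0}^{k-1}\vartheta^i\left[\frac{A\,\tau^{-im_1}}{(1-\tau)^{m_1}(\sigma_2-\sigma_1)^{m_1}}+\frac{B\,\tau^{-im_2}}{(1-\tau)^{m_2}(\sigma_2-\sigma_1)^{m_2}}\right].
\]
Because $h$ is bounded on $[\sigma_1,\sigma_2]$, the first term satisfies $\vartheta^k h(t_k)\le\vartheta^k\sup h\to0$ as $k\to\infty$. This is precisely where boundedness of $h$ is used.

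The only genuine choice, and the step to watch, is making the resulting series converge. For this I choose $\tau$ so that $\vartheta\tau^{-m}<1$, where $m:=\max\{m_1,m_2\}$. Concretely, take $\tau:=\vartheta^{1/(2m)}$, or any $\tau\in(0,1)$ if $m=0$. Then $\vartheta\tau^{-m_j}\le\vartheta\tau^{-m}=\vartheta^{1/2}<1$ for $j=1,2$, and both geometric series are bounded by $(1-\vartheta^{1/2})^{-1}$.

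Letting $k\to\infty$ therefore yields the claim with
\[
c:=(1-\tau)^{-m}\,(1-\vartheta^{1/2})^{-1},
\]
which depends only on $\vartheta$, $m_1$ and $m_2$.
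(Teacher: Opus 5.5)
Your proof is correct and takes the same approach as the standard one. The paper does not prove this lemma itself but cites Giusti, whose proof is exactly this geometric iteration $t_{i+1}-t_i=(1-\tau)\tau^i(\sigma_2-\sigma_1)$ with $\tau$ chosen so that $\vartheta\tau^{-m}<1$; your use of $m=\max\{m_1,m_2\}$ correctly adapts it to two power terms, and boundedness of $h$ is used where it should be.
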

\section{Lavrentiev phenomenon and Sobolev-Poincar\'e inequality}\label{sec_4}
\noi In this preparatory section, we establish two technical results required for the proof of the main result, that is, Theorem~\ref{main_thm}. We first address the absence of the Lavrentiev phenomenon and then conclude the section by establishing a Sobolev-Poincar\'e type inequality.
\begin{theorem}[Absence of the Lavrentiev phenomenon]
\label{thm:Lavrentiev}
Assume that \eqref{coeff_1}--\eqref{condition_2} hold and $f\in W^{1,\Psi}_{\mathrm{loc}}(\Omega)$
satisfy $\int_{\widetilde B}\Psi(x,Df)\,dx<\infty,$
where $B_R\Subset\widetilde B\Subset\Omega.$
Then there exists a sequence $\{f_k\}\subset W^{1,\infty}(B_R)$
such that
\begin{equation}\label{eq:thm_4.1}
f_k\longrightarrow f
\quad\text{strongly in }W^{1,G}(B_R)\quad \text{ and }\quad
\int_{B_R}\Psi(x,Df_k)\,dx
\longrightarrow
\int_{B_R}\Psi(x,Df)\,dx .
\end{equation}
\end{theorem}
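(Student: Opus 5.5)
We will take $f_k := f * \phi_{\varepsilon_k}$, where $\phi_\varepsilon$ is a standard mollifier and $\varepsilon_k \to 0$ with $\varepsilon_k < \dist(B_R,\partial\widetilde B)$. Each $f_k$ is smooth on $\overline{B_R}$, hence $f_k\in W^{1,\infty}(B_R)$.

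\textbf{Convergence in $W^{1,G}(B_R)$.} By the lower bound of Lemma~\ref{lem:modular_equivalence}, $\int_{\widetilde B}G(|Df|)\,dx<\infty$. Since $G\in\Delta_2$ (we tacitly assume $G,H\in\mathcal N$, as in the indices $s(G),s(H)$), standard Orlicz-space mollification gives $Df_k\to Df$ and $f_k\to f$ in $L^G(B_R)$.

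\textbf{Convergence of the energies.} We have $Df_k\to Df$ a.e. along a subsequence, and $\Psi(x,\cdot)$ is continuous. By Vitali's theorem it suffices to show that $\{\Psi(\cdot,Df_k)\}$ is equi-integrable on $B_R$. The $a$-part is harmless: $a(x)G(|Df_k|)\le \|a\|_\infty G(|Df_k|)$, and $G(|Df_k|)$ converges in $L^1$. The work is in the $b$-part. The key step is the pointwise estimate
\[
b(x)H(|Df_k(x)|)\le c\,\bigl(\Psi(\cdot,Df)\bigr)*\phi_{\varepsilon_k}(x)+c\,\bigl(G(|Df|)+1\bigr)*\phi_{\varepsilon_k}(x),
\]
whose right-hand side converges in $L^1$ and is therefore equi-integrable.

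\textbf{Proof of the pointwise estimate.} Fix $x\in B_R$ and set $B_\varepsilon := B_\varepsilon(x)$ and $b_\varepsilon:=\inf_{B_\varepsilon}b$, so that $b(x)\le b_\varepsilon+[b]_{0,\beta}\varepsilon^\beta$. For the part with $b_\varepsilon$, Jensen's inequality gives
\[
b_\varepsilon H(|Df_k(x)|)\le \int b_\varepsilon H(|Df|)\phi_\varepsilon\le \bigl(\Psi(\cdot,Df)*\phi_\varepsilon\bigr)(x).
\]
For the part with $\varepsilon^\beta$, we use $|Df_k(x)|\le c\,\varepsilon^{-n}\|Df\|_{L^1}$. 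More precisely, by Jensen, $G(|Df_k(x)|)\le c\,\varepsilon^{-n}\int_{\widetilde B}G(|Df|)\,dx=:c\,\varepsilon^{-n}M$. Condition \eqref{condition_2} gives $H(t)\le\kappa\bigl(G(t)+G(t)^{1+\gamma/n}\bigr)$, and therefore
\[
\varepsilon^\beta H(|Df_k|)\le \kappa\,\varepsilon^\beta G(|Df_k|)\bigl(1+G(|Df_k|)^{\gamma/n}\bigr)\le c\,\varepsilon^{\beta}\bigl(1+\varepsilon^{-\gamma}M^{\gamma/n}\bigr)G(|Df_k|).
\]
Since $\gamma\le\beta$, this is at most $c(M)\,G(|Df_k(x)|)\le c(M)\,\bigl(G(|Df|)*\phi_\varepsilon\bigr)(x)$. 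Combining the two parts yields the pointwise estimate.

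\textbf{Expected main obstacle.} The difficulty is the borderline case $\gamma=\beta$ with equality in the growth condition. There the factor $\varepsilon^{\beta-\gamma}$ gives no smallness, only boundedness, which is exactly why we aim for equi-integrability rather than a vanishing error term. One must also check that the case where $b(x)$ is small but $a$ is large causes no trouble. It does not: the $a$-term was controlled directly, and $\mu$ is used only through Lemma~\ref{lem:modular_equivalence} to obtain $G(|Df|)\in L^1$. If the direct Jensen bound on $G(|Df_k|)$ were too crude, we would instead localize $M$ to $\int_{B_{2\varepsilon}(x)}G(|Df|)$, which tends to $0$ uniformly by absolute continuity, and this only helps.
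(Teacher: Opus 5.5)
Your proposal is correct and follows essentially the paper's argument: mollify $f$, use the Jensen bound $G(|Df_\varepsilon|)\lesssim\varepsilon^{-n}$ together with $\kappa<\infty$ to turn $\varepsilon^\beta H(|Df_\varepsilon|)$ into $\varepsilon^{\beta-\gamma}G(|Df_\varepsilon|)$, then dominate $\Psi(x,Df_\varepsilon)$ pointwise by convolutions and conclude with a generalized dominated convergence (equivalently, Vitali) argument. The only difference is that you bound the error term directly by $c\,G(|Df|)*\phi_\varepsilon$ and treat the $a$-part trivially, whereas the paper absorbs the error into $\Psi_{\varepsilon,1}$ through a convex-combination ($\delta$) trick that needs $a_\varepsilon+b_\varepsilon\ge\mu/2$; your variant is slightly simpler and uses $\mu$ only to get $G(|Df|)\in L^1$.
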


\begin{proof}
The argument follows the strategy of \cite[Theorem~3.1]{Byun2020}, adapted to the present two-coefficient setting. Fix $0<\ve_0<1$
so that $B\equiv B_R\Subset B_{R+\ve_0}\Subset\wdt B\Subset\om$. Let $\rho\in C_0^\infty(B_1)$
be a non-negative standard mollifier satisfying
$\int_{\mathbb R^n}\rho\,dx=1.$
Now, we define
$\rho_\varepsilon(x):=\frac1{\varepsilon^n}\rho\!\left(\frac{x}{\varepsilon}\right)$ for $x\in B_\ve$ with $0<\ve<\ve_0$ such that $\int_{\mathbb R^n}\rho_\ve\,dx=1,\, 0\le\rho_\varepsilon\le c(n)\ve^{-n}$ and $|D\rho_\varepsilon|\le c(n)\ve^{-(n+1)}$.
The mollification of $f$ is given by
$f_\varepsilon(x):=(f*\rho_\varepsilon)(x),$
which belongs to
$C^\infty(\widetilde B_\varepsilon).$ Define
\[a_\varepsilon(x):=\inf_{y\in B_\varepsilon(x)}a(y), \qquad b_\varepsilon(x):=\inf_{y\in B_\varepsilon(x)}b(y),\]
and
\[\Psi_\varepsilon(x,z):=a_\varepsilon(x)G(|z|)+b_\varepsilon(x)H(|z|),
\]
together with
\[\Psi_{\varepsilon,1}(x,z):=\Psi_\varepsilon(x,z)+1.\]
By Jensen's inequality, we have
\begin{align}\label{eq:4}
G(|Df_\varepsilon(x)|)&=G\bigl(|(Df*\rho_\varepsilon)(x)|\bigr)\nonum
\\
&\le\int_{\mathbb R^n}G(|Df(x-y)|)\rho_\varepsilon(y)\,dy\le c\,\varepsilon^{-n}.
\end{align}
Moreover, $\Psi_\varepsilon(x,z)\le\Psi(x,z),$
and by the Hölder continuity of the coefficients,
\[
a(x)-a_\varepsilon(x)\le[a]_{0,\alpha}\varepsilon^\alpha,\qquad b(x)-b_\varepsilon(x)\le[b]_{0,\beta}\varepsilon^\beta.
\]

We claim that there exists a constant \(c>0\), independent of \(\varepsilon\), such that
\begin{equation}\label{eq:5}
\Psi(x,z)\le c\,\Psi_{\varepsilon,1}(x,z)
\end{equation}
for every \(x\in B_{R}\) and every \(z\in\mathbb R^n\) satisfying $G(|z|)\le c\,\varepsilon^{-n}.$ To this end, assume in addition that $\ve_0$ is so small that $[a]_{0,\alpha}\ve_0^\alpha+[b]_{0,\beta}\ve_0^\beta\le\mu/2$; then $a_\ve(x)+b_\ve(x)\ge\mu/2$ for every $x\in B_R$, so that, arguing as in the proof of Lemma~\ref{lem:modular_equivalence} with $\mu$ replaced by $\mu/2$, there exists a constant $C_1'\equiv C_1'(\mu,G,H)\in(0,1]$ such that
\[
C_1'\,G(|z|)\le\Psi_{\varepsilon,1}(x,z)\qquad\text{for every }x\in B_R\text{ and }z\in\mathbb R^n.
\]
Now, for any \(\delta\in(0,1)\),
$$\begin{aligned}
\Psi_{\varepsilon,1}(x,z)
&= \delta\Psi_1(x,z)+\delta(a_\varepsilon(x)-a(x))G(|z|)\\
&\quad+\delta(b_\varepsilon(x)-b(x))H(|z|)
+(1-\delta)\Psi_{\varepsilon,1}(x,z)\\
&\ge \delta\Psi(x,z)-\delta[a]_{0,\alpha}\varepsilon^\alpha G(|z|)
-\delta[b]_{0,\beta}\varepsilon^\beta H(|z|)+(1-\delta)C_1'G(|z|).
\end{aligned}$$
Using the growth condition \eqref{condition_2} together with \eqref{eq:4} and observing that
$$\varepsilon^\beta\left(1+\varepsilon^{-\gamma}\right)
=\varepsilon^\beta+\varepsilon^{\beta-\gamma}
\le 2,$$
where $\gamma=\min\{\alpha,\beta\}$, we obtain, for some constant $c_2\ge1$ independent of $\varepsilon$ and $\delta$,
\begin{align}\label{eq:6}
\Psi_{\varepsilon,1}(x,z)
&\ge\delta\Psi(x,z)-\delta[a]_{0,\alpha}\varepsilon^\alpha G(|z|)\nonum\\
&\qquad-c_2\,\delta[b]_{0,\beta}
\varepsilon^\beta
\left(1+\varepsilon^{-\gamma}\right)G(|z|)+(1-\delta)C_1'G(|z|)\nonum\\
&\ge\delta\Psi(x,z)+\Bigl((1-\delta)C_1'-\delta[a]_{0,\alpha}-2c_2\,\delta[b]_{0,\beta}\Bigr)G(|z|).
\end{align}
Choosing $\delta:=\frac{C_1'}{1+C_1'+[a]_{0,\alpha}+2c_2[b]_{0,\beta}}$ in \eqref{eq:6}, so that $(1-\delta)C_1'-\delta[a]_{0,\alpha}-2c_2\delta[b]_{0,\beta}\ge0$, we infer that \eqref{eq:5} holds with $c=1/\delta$. Consequently, applying Jensen's inequality, we get
\[
\begin{aligned}
\Psi(x,Df_\varepsilon(x))&\le c\,\Psi_{\varepsilon,1}(x,Df_\varepsilon(x))\\
&\le c\int_{B_\varepsilon(x)}\Psi_\varepsilon(y,Df(y))\rho_\varepsilon(x-y)\,dy+c\\
&\le c\int_{B_\varepsilon(x)}\Psi(y,Df(y))\rho_\varepsilon(x-y)\,dy+c\\
&=c\,[\Psi(\cdot,Df)*\rho_\varepsilon](x)+c.
\end{aligned}
\] 
Since $\Psi(\cdot,Df(\cdot))*\rho_\varepsilon
\longrightarrow
\Psi(\cdot,Df(\cdot))\text{ strongly in }L^1(B_R),$
we may apply a variant of the Lebesgue dominated convergence theorem to extract a sequence of functions $$\{f_k\}:=\{f_{\varepsilon_k}\}\subset C^\infty(B),
\quad \varepsilon_k\downarrow0,$$
such that \eqref{eq:thm_4.1} holds.
\end{proof}

We next establish a Sobolev-Poincar\'e type inequality in the Musielak-Orlicz-Sobolev space $\W(\om)$. To this end, we first recall the corresponding Sobolev-Poincar\'e inequality for the Orlicz-Sobolev space $W^{1,\Phi}(\om)$, which will be used in the proof. We refer to \cite[Theorem~7]{Diening_2008} for further details.

\begin{lemma}\label{Sobolev_Poincare_Young}
Let $\Phi\in\mathcal{N}$ with $s(\Phi)\ge1$. Then there exists $\theta=\theta\bigl(n,s(\Phi)\bigr)\in(0,1)$
such that, for every $f\in W^{1,\Phi}(B_R)$, we have
\begin{equation*}
\dashint_{B_R}
\Phi\!\left(
\left|
\frac{f-(f)_{B_R}}{R}
\right|
\right)\,dx
\le c\left(\dashint_{B_R}\bigl[\Phi(|Df|)\bigr]^\theta\,dx \right)^{\frac1\theta}
\end{equation*}
for some constant $c\equiv c\bigl(n,s(\Phi)\bigr).$
\end{lemma}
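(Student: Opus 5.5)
The plan is to reduce the claim to an $L^1$-type Sobolev inequality with exponent gain $n/(n-1)$ applied to a suitably convex power of $\Phi$, and then to get the exponent $\theta$ from Hölder's inequality. First, by Lemma~\ref{lem:operations-N}(4) there is $\alpha_0=\alpha_0(s(\Phi))\in(0,1)$ with $\Phi^\vartheta\in\mathcal N$ for every $\vartheta\in(\alpha_0,1]$, with $s(\Phi^\vartheta)$ depending only on $s(\Phi)$ and $\vartheta$. I fix
\[
\theta:=\max\Big\{\tfrac{1+\alpha_0}{2},\,\tfrac{n-1}{n}\Big\}\in(0,1)
\]
and set $\psi:=\Phi^\theta\in\mathcal N$. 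Then $\Phi=\psi^{1/\theta}$ and $1/\theta\le n/(n-1)$. By Hölder's inequality it therefore suffices to prove the Orlicz–Sobolev–Poincaré inequality
\[
\Big(\dashint_{B_R}\psi\Big(\Big|\tfrac{f-(f)_{B_R}}{R}\Big|\Big)^{\frac{n}{n-1}}dx\Big)^{\frac{n-1}{n}}\le c\dashint_{B_R}\psi(|Df|)\,dx ,
\]
with $c=c(n,s(\psi))$. Raising both sides to the power $1/\theta$ then gives the statement.

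To prove this inequality, I would first replace $(f)_{B_R}$ by a median $m$ of $f$ on $B_R$, so that $|\{f\ge m\}|,|\{f\le m\}|\ge|B_R|/2$. I treat $(f-m)_+$ and $(f-m)_-$ separately; both vanish on at least half of $B_R$. Let $w:=\psi(|f-m|_\pm/R)$. I would work first with $f$ Lipschitz and pass to the limit by density afterwards; $\Phi\in\Delta_2$ makes the limit harmless. By the chain rule,
\[
|Dw|\le R^{-1}\psi'(|f-m|/R)\,|Df| .
\]
For $W^{1,1}$ functions vanishing on half of the ball, the classical Sobolev inequality gives
\[
\Big(\dashint_{B_R}w^{\frac{n}{n-1}}\Big)^{\frac{n-1}{n}}\le c(n)\,R\dashint_{B_R}|Dw| .
\]
The factor $R$ cancels the $R^{-1}$ from the chain rule. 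Lemma~\ref{lem:Young epsilon} with $t=|f-m|/R$ and $s=|Df|$ then gives
\[
\psi'(|f-m|/R)\,|Df|\le \varepsilon\,\psi(|f-m|/R)+c\,\varepsilon^{-s(\psi)}\psi(|Df|).
\]
Since $\dashint w\le(\dashint w^{n/(n-1)})^{(n-1)/n}$, choosing $\varepsilon=\varepsilon(n)$ small absorbs the $\varepsilon$-term into the left-hand side. This is legitimate provided the left side is finite, which holds for Lipschitz $f$. Adding the two signs yields the inequality with $m$ in place of $(f)_{B_R}$.

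It then remains to switch from the median back to the mean. Since $|m-(f)_{B_R}|\le\dashint|f-m|$, Jensen's inequality and the convexity of $\psi$ give
\[
\psi\big(|m-(f)_{B_R}|/R\big)\le\dashint\psi(|f-m|/R)\le c\dashint\psi(|Df|).
\]
The $\Delta_2$ property of $\psi$ (Lemma~\ref{lem:basic-N}) then lets me split $\psi(|f-(f)_{B_R}|/R)\le c\,\psi(|f-m|/R)+c\,\psi(|m-(f)_{B_R}|/R)$ inside the $L^{n/(n-1)}$ average, which concludes the argument. All constants depend only on $n$ and $s(\Phi)$.

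The main obstacle is the absorption step. It requires the left-hand side to be finite, hence the approximation by Lipschitz functions, and it requires $\psi=\Phi^\theta$ to remain in $\mathcal N$ with $\theta$ simultaneously at least $(n-1)/n$, which is exactly why Lemma~\ref{lem:operations-N}(4) is needed. If $\alpha_0$ turned out to be larger than $(n-1)/n$, the choice $\theta>\alpha_0$ is still admissible, since only $\theta\ge(n-1)/n$ is needed for the Hölder step. Hence no conflict arises.
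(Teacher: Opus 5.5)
Your proof is correct. The paper gives no argument of its own here; it simply quotes the result from \cite[Theorem~7]{Diening_2008}, so your proposal is a self-contained replacement for that citation.

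Your route works as follows. You pass to $\psi=\Phi^\theta$ with $\theta\in(\alpha_0,1)$ and $\theta\ge\frac{n-1}{n}$, so that $\psi\in\N$ by Lemma~\ref{lem:operations-N}(4). You then prove the stronger Sobolev-type estimate
\[
\Bigl(\dashint_{B_R}\Bigl[\psi\Bigl(\frac{|f-(f)_{B_R}|}{R}\Bigr)\Bigr]^{\frac{n}{n-1}}dx\Bigr)^{\frac{n-1}{n}}\le c\dashint_{B_R}\psi(|Df|)\,dx .
\]
For this you apply the $W^{1,1}$ Sobolev inequality for functions vanishing on half of the ball to $\psi((f-m)_\pm/R)$, and absorb the lower-order term via Lemma~\ref{lem:Young epsilon}. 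You conclude by H\"older's inequality, using $1/\theta\le n/(n-1)$.

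Your route buys self-containedness within the paper's own toolkit, an explicit admissible $\theta=\max\{\frac{1+\alpha_0}{2},\frac{n-1}{n}\}$, and the intermediate $L^{n/(n-1)}$ gain. The citation buys brevity and generality: the cited theorem is formulated for general $N$-functions under $\Delta_2$-conditions on the function and its conjugate. Your argument instead uses the structure condition \eqref{eq:Phi-structure}, through Lemma~\ref{lem:operations-N}(4), to keep $\Phi^\theta$ in $\N$.

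Two details deserve a line each when you write it up.
\begin{enumerate}
\item In the chain rule for $w_\pm$, keep the indicator of $\{\pm(f-m)>0\}$, so that the $\varepsilon$-term is exactly $\varepsilon\dashint_{B_R}w_\pm\,dx$. Alternatively, absorb after summing the two signs, as you suggest.
\item Instead of density of Lipschitz functions up to the boundary, you can work with the bounded truncations $\min\{(f-m)_\pm,k\}$. This makes the absorption legitimate directly for $f\in W^{1,\Phi}(B_R)$, and you then let $k\to\infty$ by monotone convergence.
\end{enumerate}
The $L^1$ Sobolev step needs $n\ge2$, which is the paper's standing assumption.
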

\begin{theorem}[Sobolev-Poincar\'e type inequality]\label{Sobolev-Poincare}
Let $\Psi:\Omega\times[0,\infty)\to[0,\infty)$ be defined by \eqref{function_psi} and suppose that the assumptions \eqref{coeff_1}--\eqref{condition_2} are satisfied. Then there exists
$$\theta=\theta\bigl(n,s(G),s(H),\gamma\bigr)\in(0,1)$$
such that, for every $v\in W^{1,\Psi}(B_r)$ and every ball $B_r\subset\Omega$ with $r\le1$, the estimate
\begin{equation*}
\dashint_{B_r}
\Psi_1\!\left(x,\frac{v-(v)_{B_r}}{r}\right)\,dx\le c\left[1+\left(\int_{B_r}G(|Dv|)\,dx\right)^{\frac{\gamma}{n}}\right]
\left[\dashint_{B_r}\bigl(\Psi_1(x,Dv)\bigr)^\theta\,dx\right]^{\frac1\theta}
\end{equation*}
holds, where $c\equiv c\bigl(n,\kappa,s(G),s(H),\alpha,\beta,\mu, [a]_{0,\alpha},[b]_{0,\beta}\bigr).$
\end{theorem}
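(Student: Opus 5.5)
We follow the scheme of the single-coefficient case (cf.\ \cite{Baasandorj2020,Colombo2016}), adapted to two coefficients. Write $w:=v-(v)_{B_r}$ and split
\[
\Psi_1\Bigl(x,\frac{w}{r}\Bigr)\le a(x)G\Bigl(\frac{|w|}{r}\Bigr)+b(x)H\Bigl(\frac{|w|}{r}\Bigr)+1 .
\]
Freeze the coefficients at their infima on the ball: $a_i:=\inf_{B_r}a$ and $b_i:=\inf_{B_r}b$. Hölder continuity gives
\[
a(x)\le a_i+[a]_{0,\alpha}(2r)^\alpha,\qquad b(x)\le b_i+[b]_{0,\beta}(2r)^\beta\qquad\text{on }B_r .
\]
The frozen function $\Phi_i(t):=a_iG(t)+b_iH(t)$ is a member of $\mathcal N$ with index controlled by $s(G)+s(H)$ (Lemma~\ref{lem:operations-N}); the degenerate cases $a_i=0$ or $b_i=0$ only remove a term. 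Lemma~\ref{Sobolev_Poincare_Young} then gives
\[
\dashint_{B_r}\Phi_i\Bigl(\frac{|w|}{r}\Bigr)dx\le c\Bigl(\dashint_{B_r}[\Phi_i(|Dv|)]^{\theta}dx\Bigr)^{1/\theta}\le c\Bigl(\dashint_{B_r}[\Psi_1(x,Dv)]^\theta dx\Bigr)^{1/\theta},
\]
with $\theta=\theta(n,s(G),s(H))$. To handle the constant $1$ and keep the final $\theta$ uniform, we take the smaller of the $\theta$'s produced by Lemma~\ref{Sobolev_Poincare_Young} for $G$ and for $\Phi_i$. Since $s(\cdot)$ is bounded independently of $a_i,b_i$, this choice is uniform.

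The remaining oscillation terms are $r^\alpha\dashint G(|w|/r)$ and $r^\beta\dashint H(|w|/r)$. The $G$-term is easy: $r^\alpha\le1$, and Lemma~\ref{Sobolev_Poincare_Young} for $G$ together with Lemma~\ref{lem:modular_equivalence} ($G\le C_1^{-1}\Psi_1$) bounds it by $c(\dashint\Psi_1^\theta)^{1/\theta}$.

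The $H$-term is the main obstacle. By \eqref{condition_2}, $H(t)\le\kappa\,(G(t)+G(t)^{1+\gamma/n})$, and $\gamma\le\beta$ with $r\le1$ gives $r^\beta\le r^\gamma$. The $G$ part is handled as before. For the part $r^\gamma\dashint G(|w|/r)^{1+\gamma/n}$ we need to extract the factor $(\int_{B_r}G(|Dv|))^{\gamma/n}$. We plan to argue as in \cite{Colombo2016}: estimate $\sup_{B_r}$-type quantities via a pointwise bound is not available, so instead we use Hölder/Sobolev on the power $G^{1+\gamma/n}$. Concretely, $G^{1+\gamma/n}\in\mathcal N$ by Lemma~\ref{lem:operations-N}, so Lemma~\ref{Sobolev_Poincare_Young} applied to it gives
\[
\dashint G\Bigl(\frac{|w|}{r}\Bigr)^{1+\gamma/n}\le c\Bigl(\dashint G(|Dv|)^{(1+\gamma/n)\theta}\Bigr)^{1/\theta}.
\]
Writing $G^{(1+\gamma/n)\theta}=G^{\theta}\,G^{\theta\gamma/n}$ is not directly summable, so the better route is a Gagliardo–Nirenberg/Sobolev argument on $G(|w|/r)$ itself. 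We aim to use the Sobolev inequality with exponent $\frac{n}{n-\theta'}$ in the form
\[
\dashint G\Bigl(\frac{|w|}{r}\Bigr)^{1+\gamma/n}\le \Bigl(\dashint G\Bigl(\frac{|w|}{r}\Bigr)\Bigr)^{\gamma/n}\cdot(\dots),
\]
controlling $(\dashint G(|w|/r))^{\gamma/n}\le c\,r^{-\gamma}\bigl(\int_{B_r}G(|Dv|)\bigr)^{\gamma/n}$ (Poincaré plus $|B_r|\sim r^n$). This is exactly where $r^\gamma$ cancels $r^{-\gamma}$. The remaining factor should reduce to $(\dashint[G(|Dv|)]^\theta)^{1/\theta}$, via an interpolation of $L^{1+\gamma/n}$ between $L^1$ and a Sobolev exponent. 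We expect that in the Orlicz setting this interpolation must be carried out with $G$ applied to $|w|/r$, using the $\Delta_2$ bounds of Lemma~\ref{lem:basic-N}, and it is the step we expect to need the most care. Finally we bound $G$ by $C_1^{-1}\Psi_1$ and collect terms.
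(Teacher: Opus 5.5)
Your freezing of the coefficients at $a_i=\inf_{B_r}a$ and $b_i=\inf_{B_r}b$ is a legitimate substitute for the paper's four-case split, and somewhat cleaner. (The paper splits according to whether $\sup_{B_r}a\le 4[a]_{0,\alpha}r^\alpha$ and whether $\sup_{B_r}b\le 4[b]_{0,\beta}r^\beta$.) It works because $\Phi_i(|Dv|)\le\Psi_1(x,Dv)$ and the index of $\Phi_i$ is controlled by $s(G),s(H)$ uniformly in $a_i,b_i$. The oscillation terms then reduce, exactly as in the paper's Case (d), to $\dashint G(|w|/r)$ and $r^\gamma\dashint G^{1+\gamma/n}(|w|/r)$. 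One slip: to make $\theta$ uniform you must take the \emph{largest} of the exponents delivered by Lemma~\ref{Sobolev_Poincare_Young}, not the smallest. Since $\bigl(\dashint f^{\theta}\bigr)^{1/\theta}$ is nondecreasing in $\theta$, an estimate with a larger $\theta$ does not imply one with a smaller $\theta$; the paper takes the maximum.

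The genuine gap is the step you flag yourself. The bound on $r^\gamma\dashint G^{1+\gamma/n}(|w|/r)$ is the real content of the theorem, and it is left as ``$(\dots)$''. Moreover, the interpolation you sketch, if run with the natural Sobolev bound $\bigl(\dashint G(|w|/r)^{\frac{n}{n-1}}\bigr)^{\frac{n-1}{n}}\le c\dashint G(|Dv|)$, only produces $c\bigl(\int_{B_r}G(|Dv|)\,dx\bigr)^{\gamma/n}\dashint_{B_r}G(|Dv|)\,dx$. That is $\theta=1$, which is useless for the Gehring argument this theorem feeds.

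The missing idea (Step 1 of the paper, taken from \cite[Theorem~4.2]{Baasandorj2020}) is to apply the Sobolev embedding with gain $\frac{n}{n-1}$ to the Young function $G^{\theta_1}$ instead of $G$. Choose $\theta_1\in\bigl((1-\frac1n)(1+\frac{\gamma}{n}),1\bigr)$ close enough to $1$ that $G^{\theta_1}\in\mathcal N$ (Lemma~\ref{lem:operations-N}(4)). The $W^{1,1}$ Sobolev--Poincar\'e inequality for $G^{\theta_1}(|w|/r)$, combined with $t\Phi'(s)\le\Phi(t)+c\,\Phi(s)$, gives $\bigl(\dashint_{B_r}G^{\theta_1 n/(n-1)}(|w|/r)\,dx\bigr)^{(n-1)/n}\le c\dashint_{B_r}G^{\theta_1}(|Dv|)\,dx$. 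Since $\theta_1\frac{n}{n-1}\ge 1+\frac{\gamma}{n}$, H\"older yields
\[
\dashint_{B_r}G^{1+\frac{\gamma}{n}}\Bigl(\frac{|w|}{r}\Bigr)\,dx\le c\Bigl(\dashint_{B_r}G^{\theta_1}(|Dv|)\,dx\Bigr)^{\frac{1+\gamma/n}{\theta_1}} .
\]
Now split the exponent as $\frac{\gamma}{n\theta_1}+\frac{1}{\theta_1}$ and use Jensen on the first factor:
\[
\Bigl(\dashint_{B_r}G^{\theta_1}(|Dv|)\,dx\Bigr)^{\frac{\gamma}{n\theta_1}}\le\Bigl(\dashint_{B_r}G(|Dv|)\,dx\Bigr)^{\frac{\gamma}{n}}=c(n)\,r^{-\gamma}\Bigl(\int_{B_r}G(|Dv|)\,dx\Bigr)^{\frac{\gamma}{n}} .
\]
This cancels $r^\gamma$. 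The remaining factor satisfies $\bigl(\dashint_{B_r}G^{\theta_1}(|Dv|)\,dx\bigr)^{1/\theta_1}\le c\bigl(\dashint_{B_r}[\Psi_1(x,Dv)]^{\theta_1}\,dx\bigr)^{1/\theta_1}$ by Lemma~\ref{lem:modular_equivalence}.

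Your first attempt, applying Lemma~\ref{Sobolev_Poincare_Young} to $G^{1+\gamma/n}$, fails for the reason you suspected. That lemma gives no quantitative information on its $\theta$, so you cannot guarantee $(1+\frac{\gamma}{n})\theta<1$.
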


\begin{proof}
Using the definition of $\Psi_1$, we decompose the left-hand side as
$$\dashint_{B_r} \Psi_1\!\left(x,\frac{|v-(v)_{B_r}|}{r}\right)dx = I_a + I_b + 1,$$
where
$$I_a := \dashint_{B_r} a(x)\,G\!\left(\frac{|v-(v)_{B_r}|}{r}\right)dx, \qquad I_b := \dashint_{B_r} b(x)\,H\!\left(\frac{|v-(v)_{B_r}|}{r}\right)dx.$$
 Since $\Psi_1(x,t) \geq 1$ for all $x$ and $t$, we have the trivial bound $1 \leq \bigl(\dashint_{B_r}[\Psi_1(x,|Dv|)]^\theta\bigr)^{1/\theta}$ for any $\theta \in (0,1)$. It remains to estimate $I_a$ and $I_b$.
 
\medskip
\noi {\bf Step 1:} The $G^{1+\gamma/n}$-Poincar\'e Estimate. Following the approach of \cite[Theorem 4.2, Step 1]{Baasandorj2020}, since $s(G) \geq 1$, there exists
$$\theta_1 \equiv \theta_1(n,s(G),\gamma) \in \left(\left(1-\tfrac{1}{n}\right)\!\left(1+\tfrac{\gamma}{n}\right),\,1\right)$$ such that
\begin{equation}\label{eq:9}
\dashint_{B_r} G^{1+\gamma/n}\!\!\left(\frac{|v-(v)_{B_r}|}{r}\right)dx \;\leq\; c_1 \left(\dashint_{B_r} G^{\theta_1}(|Dv|)\,dx\right)^{\!(1+\gamma/n)/\theta_1}
\end{equation}
for $c_1 \equiv c_1(n,s(G),\gamma)$. In particular, applying H\"older inequality, we obtain
\begin{equation*}
\dashint_{B_r} G\left(\frac{|v-(v)_{B_r}|}{r}\right)dx \;\leq\; c_1 \left(\dashint_{B_r} G^{\theta_1}(|Dv|)\,dx\right)^{\!1/\theta_1}
\end{equation*}

\medskip
\noi{\bf Step 2:} Estimate of $I_a$. We consider two complementary sub-cases.
 
\noi{\textit{Case} (a):} $\sup_{B_r} a(\cdot) \leq 4[a]_{0,\alpha}r^\alpha$. For every $x \in B_r$, we get
$$I_a \;\leq\; 4[a]_{0,\alpha}r^\alpha \dashint_{B_r} G\!\left(\frac{|v-(v)_{B_r}|}{r}\right)dx.$$
Applying Lemma \ref{Sobolev_Poincare_Young} to $G$ with exponent $\vartheta_G \equiv \vartheta_G(n,s(G)) \in (0,1)$
$$\dashint_{B_r} G\!\left(\frac{|v-(v)_{B_r}|}{r}\right)dx \;\leq\; c\left(\dashint_{B_r} [G(|Dv|)]^{\vartheta_G}\,dx\right)^{\!1/\vartheta_G}.$$
Therefore, by Lemma \ref{lem:modular_equivalence} (lower bound $C_1 G(t) \leq \Psi_1(x,t)$), we have
$$I_a \;\leq\; c\,\left(\dashint_{B_r} [\Psi_1(x,|Dv|)]^{\vartheta_G}\,dx\right)^{\!1/\vartheta_G}.$$
 
\noi{\textit{Case} (b):} $\sup_{B_r} a(\cdot) > 4[a]_{0,\alpha}r^\alpha$. There exists $y \in B_r$ such that $a(y) > 4[a]_{0,\alpha}r^\alpha$. For every $x \in B_r$,
$$|a(x) - a(y)| \leq 2[a]_{0,\alpha}r^\alpha,$$
so $\frac{1}{2}a(y) \leq a(x) \leq \frac{3}{2}a(y)$ for all $x \in B_r$. In particular,
$$I_a \leq \dashint_{B_r} a(x)\,G\!\left(\frac{|v-(v)_{B_r}|}{r}\right)dx \leq \frac{3}{2}\,a(y) \dashint_{B_r} G\!\left(\frac{|v-(v)_{B_r}|}{r}\right)dx.$$
By Lemma \ref{lem:operations-N} the function $\Phi(t) := a(y)G(t)$ belongs to $\mathcal{N}$ with $s(\Phi) = s(G)$. Applying Lemma \ref{Sobolev_Poincare_Young} to $\Phi$ with exponent $\vartheta_0 \equiv \vartheta_0(n,s(G)) \in (0,1)$ yields
$$a(y)\dashint_{B_r} G\!\left(\frac{|v-(v)_{B_r}|}{r}\right)dx \leq c\left(\dashint_{B_r}[a(y)G(|Dv|)]^{\vartheta_0}\,dx\right)^{\!1/\vartheta_0}.$$
Since $a(y)G(t) \leq \Psi_1(x,t)$ for all $x \in B_r$, we get
$$I_a \;\leq\; c\left(\dashint_{B_r}[\Psi_1(x,|Dv|)]^{\vartheta_0}\,dx\right)^{\!1/\vartheta_0}.$$

\noi {\bf Step 3:} Estimate of $I_b$. We again split into two complementary sub-cases.

\noi {\textit{Case} $(c)$:}  $\sup_{B_r} b(\cdot) > 4[b]_{0,\beta}r^\beta$. For some $\vartheta_H\equiv\vartheta_H(n,s(H))\in(0,1)$, repeating the argument from Step~2, Case~(b), with $H$ in place of $G$, we obtain
\[
I_b \leq c\left(\dashint_{B_r}[\Psi_1(x,|Dv|)]^{\vartheta_H}\,dx\right)^{1/\vartheta_H}.
\]

\noi {\textit{Case} $(d)$:} $\sup_{B_r} b(\cdot) \leq 4[b]_{0,\beta}r^\beta$. Since $b(x) \leq 4[b]_{0,\beta}r^\beta$ for every $x \in B_r$, applying the growth condition $H(t) \leq \kappa(G(t)+G^{1+\gamma/n}(t))$, we get
\begin{align*}
    I_b &\leq 4[b]_{0,\beta}r^\beta \dashint_{B_r} H\!\left(\frac{|v-(v)_{B_r}|}{r}\right)dx\\
    &\leq 4\kappa[b]_{0,\beta}r^\beta\left[\dashint_{B_r} G\!\left(\frac{|v-(v)_{B_r}|}{r}\right)dx + \dashint_{B_r} G^{1+\gamma/n}\!\!\left(\frac{|v-(v)_{B_r}|}{r}\right)dx\right].
\end{align*}

We bound each term on the right-hand side separately.
 
First bracket term. By Lemma \ref{Sobolev_Poincare_Young} applied to $G$ with exponent $\theta_1$ (where $\theta_1$ is as in Step 1) implies that
$$r^\beta\dashint_{B_r} G\!\left(\frac{|v-(v)_{B_r}|}{r}\right)dx \;\leq\; cr^\beta\left(\dashint_{B_r} G^{\theta_1}(|Dv|)\,dx\right)^{\!1/\theta_1}.$$
Since $r \leq 1$ implies $r^\beta \leq 1$. Using Lemma \ref{lem:modular_equivalence}, we get
\begin{equation}\label{eq:7}
    r^\beta\dashint_{B_r} G\!\left(\frac{|v-(v)_{B_r}|}{r}\right)dx \;\leq\; c\left(\dashint_{B_r} [\Psi_1(x,|Dv|)]^{\theta_1}\,dx\right)^{\!1/\theta_1}.
\end{equation}
 
Second bracket term. Applying \eqref{eq:9} and then H\"older inequality gives
\begin{align*}
    &r^{\beta}\dashint_{B_r} G^{1+\gamma/n}\!\!\left(\frac{|v-(v)_{B_r}|}{r}\right)dx \leq c_1 r^\beta\left(\dashint_{B_r} G^{\theta_1}(|Dv|)\,dx\right)^{\!(1+\gamma/n)/\theta_1}\\
    &\quad\leq c_1 r^{\beta} \left(\dashint_{B_r} G^{\theta_1}(|Dv|)\,dx\right)^{\!\gamma/(\theta_1 n)}\!\left(\dashint_{B_r} G^{\theta_1}(|Dv|)\,dx\right)^{\!1/\theta_1}\\
    &\quad\leq c_1 r^{\beta-\gamma} \left(\int_{B_r} G(|Dv|)\,dx\right)^{\!\gamma/n}\!\left(\dashint_{B_r} G^{\theta_1}(|Dv|)\,dx\right)^{\!1/\theta_1}
\end{align*}
Using $r^{\beta-\gamma} \leq 1$ and Lemma \ref{lem:modular_equivalence}, we get
\begin{multline}\label{eq:8}
    r^\beta\dashint_{B_r} G^{1+\gamma/n}\!\!\left(\frac{|v-(v)_{B_r}|}{r}\right)dx \\ \leq c\left(\dashint_{B_r} G(|Dv|)\,dx\right)^{\!\gamma/n}\!\left(\dashint_{B_r} [\Psi_1(x,|Dv|)]^{\theta_1}\,dx\right)^{\!1/\theta_1}.
\end{multline}
 
Combining \eqref{eq:7} and \eqref{eq:8}, we conclude
$$I_b \;\leq\; c\left[1 + \left(\int_{B_r} G(|Dv|)\,dx\right)^{\!\gamma/n}\right]\left(\dashint_{B_r} [\Psi_1(x,|Dv|)]^{\theta_1}\,dx\right)^{\!1/\theta_1}. $$

 \medskip
\noi{\bf Step 4:} Conclusion. Setting $\theta:=\max\{\vartheta_G,\vartheta_0,\vartheta_H,\theta_1\}\in(0,1)$
and collecting the constants
\[
c\equiv c\bigl(n,\kappa,s(G),s(H),\alpha,\beta,\mu,[a]_{0,\alpha},[b]_{0,\beta}\bigr)
\]
from Steps~1--3, we obtain
\begin{multline*}
\dashint_{B_r}
\Psi_1\!\left(x,\frac{|v-(v)_{B_r}|}{r}\right)\,dx\\
\le c\left[1+\left(\int_{B_r}G(|Dv|)\,dx\right)^{\gamma/n}\right]
\left(\dashint_{B_r}[\Psi_1(x,|Dv|)]^{\theta}\,dx
\right)^{1/\theta}.
\end{multline*}
This completes the proof of the Theorem~\ref{Sobolev-Poincare}.
\end{proof}

\section{Fractional differentiability result}\label{sect_5}
\noi We begin this section with the following lemma, which may be viewed as a higher integrability version of Theorem~\ref{thm:Lavrentiev}.
\begin{lemma}\label{lem:approx-higher}
Fix $\delta\ge0$ and let $v\in W^{1,\Psi}_{\rm loc}(\Omega)$ satisfy $[\Psi(\cdot,Dv)]^{1+\delta}\in L^1_{\rm loc}(\Omega)$. Then there exists a sequence $\{v_m\}\subset W^{1,\infty}_{\rm loc}(\Omega)$ such that $v_m\to v$ strongly in $W^{1,G^{1+\delta}}_{\rm loc}(\Omega)$ and
\[
\int_B[\Psi(x,Dv_m)]^{1+\delta}\,dx\longrightarrow\int_B[\Psi(x,Dv)]^{1+\delta}\,dx
\]
for every ball $B\Subset\Omega$.
\end{lemma}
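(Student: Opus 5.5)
We follow the proof of Theorem~\ref{thm:Lavrentiev}, replacing $\Psi$ by $\Psi^{1+\delta}$ throughout. Fix a ball $B\Subset\Omega$ and a slightly larger ball $\widetilde B$ with $B\Subset\widetilde B\Subset\Omega$. Take $\varepsilon_0\in(0,1)$ so small that $B_{\varepsilon_0}(B)\subset\widetilde B$ and $[a]_{0,\alpha}\varepsilon_0^\alpha+[b]_{0,\beta}\varepsilon_0^\beta\le\mu/2$. Set $v_\varepsilon:=v*\rho_\varepsilon$ and define $a_\varepsilon$, $b_\varepsilon$, $\Psi_\varepsilon$, $\Psi_{\varepsilon,1}$ exactly as in that proof. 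The sequence is $v_m:=v_{\varepsilon_m}$ with $\varepsilon_m\downarrow0$; these are smooth, hence $W^{1,\infty}$, on $B$. To obtain a single sequence in $W^{1,\infty}_{\rm loc}(\Omega)$ valid for all balls, we exhaust $\Omega$ by compact sets and mollify on each, or equivalently use a partition of unity together with a diagonal argument.

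\textbf{Step 1: pointwise bound.} Jensen's inequality gives $G(|Dv_\varepsilon(x)|)\le c\,\varepsilon^{-n}$, where $c$ depends on $\|G(|Dv|)\|_{L^1(\widetilde B)}$. The claim \eqref{eq:5} then holds verbatim: $\Psi(x,z)\le c\,\Psi_{\varepsilon,1}(x,z)$ whenever $G(|z|)\le c\varepsilon^{-n}$. This step uses \eqref{condition_2}, the Hölder continuity of $a,b$, and $\varepsilon^{\beta}(1+\varepsilon^{-\gamma})\le 2$. Next we exploit convexity of $t\mapsto t^{1+\delta}$ together with the convexity of $z\mapsto\Psi_\varepsilon(x,z)$. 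Since $a_\varepsilon(x)\le a(y)$ and $b_\varepsilon(x)\le b(y)$ for $y\in B_\varepsilon(x)$, Jensen's inequality applied to the convex function $z\mapsto[\Psi_\varepsilon(x,z)]^{1+\delta}$ with the probability measure $\rho_\varepsilon(x-y)\,dy$ yields
\[
[\Psi(x,Dv_\varepsilon(x))]^{1+\delta}\le c\,[\Psi_\varepsilon(x,Dv_\varepsilon(x))+1]^{1+\delta}\le c\,\bigl([\Psi(\cdot,Dv)]^{1+\delta}*\rho_\varepsilon\bigr)(x)+c .
\]

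\textbf{Step 2: convergence.} Because $[\Psi(\cdot,Dv)]^{1+\delta}\in L^1(\widetilde B)$, the mollified functions $[\Psi(\cdot,Dv)]^{1+\delta}*\rho_\varepsilon$ converge in $L^1(B)$. Moreover $Dv_\varepsilon\to Dv$ a.e. along a subsequence, so $[\Psi(x,Dv_\varepsilon)]^{1+\delta}\to[\Psi(x,Dv)]^{1+\delta}$ a.e. The generalized dominated convergence theorem, with dominating functions converging in $L^1$, then gives convergence of the integrals over $B$. For the $W^{1,G^{1+\delta}}$ convergence, note that $G^{1+\delta}\in\mathcal N$ by Lemma~\ref{lem:operations-N}(3), since $G^{1+\delta}(t)=t^{\mu}G(t)$ up to a power adjustment; more simply, $G^{1+\delta}$ is convex and $\Delta_2$. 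Lemma~\ref{lem:modular_equivalence} gives $G^{1+\delta}(|Dv|)\le c\,\Psi_1(x,Dv)^{1+\delta}\in L^1$. Standard mollifier convergence in Orlicz spaces with $\Delta_2$ then yields $Dv_\varepsilon\to Dv$ and $v_\varepsilon\to v$ in $L^{G^{1+\delta}}(B)$.

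\textbf{Main obstacle.} The delicate point is Step 1. The constant in \eqref{eq:5} must remain independent of $\varepsilon$. In addition, the admissibility condition $G(|Dv_\varepsilon|)\le c\varepsilon^{-n}$ must hold with a constant that is uniform on $B$, which is where $\varepsilon<\varepsilon_0$ and $B_\varepsilon(x)\subset\widetilde B$ are needed. Once the $\delta=0$ estimate $\Psi\le c\Psi_{\varepsilon,1}$ is available, raising it to the power $1+\delta$ costs only the factor $c^{1+\delta}$ together with $(s+1)^{1+\delta}\le 2^{\delta}(s^{1+\delta}+1)$.
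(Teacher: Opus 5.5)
Your proof is correct, but it is organized differently from the paper's. The paper does not rerun the mollification. It introduces the auxiliary integrand $\Psi_\delta(x,z)=a(x)^{1+\delta}G^{1+\delta}(|z|)+b(x)^{1+\delta}H^{1+\delta}(|z|)\approx[\Psi(x,z)]^{1+\delta}$. It then notes that $a^{1+\delta}\in C^{0,\alpha}$, $b^{1+\delta}\in C^{0,\beta}$ and $G^{1+\delta},H^{1+\delta}\in\mathcal N$, and applies Theorem~\ref{thm:Lavrentiev} to this new structure as a black box.

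That reduction is short, but it leaves two things implicit. First, the new data must satisfy all hypotheses of Theorem~\ref{thm:Lavrentiev}: nondegeneracy with constant $2^{-\delta}\mu^{1+\delta}$, $G^{1+\delta}\prec H^{1+\delta}$, and \eqref{condition_2} with $(G^{1+\delta})^{1+\gamma/n}$ in place of $G^{1+\gamma/n}$. These are true but not checked. Second, the closing sentence ``the claim follows from the equivalence'' needs an extra Vitali-type step. Convergence of $\int_B\Psi_\delta(x,Dv_m)\,dx$ together with $\Psi_\delta\approx\Psi^{1+\delta}$ gives only comparability of $\int_B\Psi^{1+\delta}$, not convergence. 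One must use a.e.\ convergence of $Dv_m$ to upgrade to $L^1$-convergence of $\Psi_\delta(\cdot,Dv_m)$, and hence uniform integrability of $\Psi^{1+\delta}(\cdot,Dv_m)$.

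Your route avoids both issues. The gap condition enters only at the level $\delta=0$: the threshold $G(|z|)\le c\varepsilon^{-n}$ is fixed by $\|G(|Dv|)\|_{L^1(\widetilde B)}$ and does not involve $\delta$. The power $1+\delta$ is applied only afterwards. Jensen for the convex map $z\mapsto[\Psi_\varepsilon(x,z)]^{1+\delta}$, with $x$ frozen, then gives an explicit majorant converging in $L^1$. The generalized dominated convergence theorem therefore yields convergence of $\Psi^{1+\delta}(\cdot,Dv_m)$ directly. The price is redoing the mollification argument instead of citing it. You also explain how to get a single sequence in $W^{1,\infty}_{\rm loc}(\Omega)$ that works for every ball, which the paper passes over.

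One small correction: Lemma~\ref{lem:operations-N}(3) concerns $t^{\mu}\Phi(t)$, not $\Phi^{1+\delta}$, so it does not give $G^{1+\delta}\in\mathcal N$. A direct computation does, since $t(G^{1+\delta})''/(G^{1+\delta})'=\delta\,tG'/G+tG''/G'$. As you note, though, convexity and $\Delta_2$ are all the mollifier convergence in $L^{G^{1+\delta}}$ actually requires.
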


\begin{proof}
Set $\Psi_\delta(x,z):=a(x)^{1+\delta}G^{1+\delta}(|z|)+b(x)^{1+\delta}H^{1+\delta}(|z|)$. Since $t\mapsto t^{1+\delta}$ is convex and increasing, $(A+B)^{1+\delta}\approx A^{1+\delta}+B^{1+\delta}$ for $A,B\ge0$, so
\[
\Psi_\delta(x,z)\approx[\Psi(x,z)]^{1+\delta}
\]
uniformly on $\Omega\times\mathbb R^n$. It thus suffices to prove the same convergences with $\Psi_\delta$ in place of $\Psi^{1+\delta}$.

Since $a\in C^{0,\alpha}(\Omega)$ and $t\mapsto t^{1+\delta}$ is locally Lipschitz, $a^{1+\delta}\in C^{0,\alpha}(\Omega)$; likewise $b^{1+\delta}\in C^{0,\beta}(\Omega)$. Also from Lemma \ref{lem:operations-N}, we can see that $G^{1+\delta},H^{1+\delta}\in\mathcal N$. Hence, applying Theorem \ref{thm:Lavrentiev} to $v$ gives $\{v_m\}\subset W^{1,\infty}_{\rm loc}(\Omega)$ with
\[
v_m\to v\ \text{in }W^{1,G^{1+\delta}}_{\rm loc}(\Omega),\qquad \int_B\Psi_\delta(x,Dv_m)\,dx\to\int_B\Psi_\delta(x,Dv)\,dx
\]
for every $B\Subset\Omega$. The claim follows from the equivalence $\Psi_\delta\approx\Psi^{1+\delta}$.
\end{proof}
The following auxiliary lemma will be used to prove the existence and uniqueness result.
\begin{lemma}\label{lem:existence}
Assume that \eqref{coeff_1}--\eqref{condition_2} hold. Let $B\Subset\Omega$ be a ball and let
$S:B\to\mathbb{R}^n$ be a measurable vector field such that
$\Psi(\cdot,S)\in L^1(B)$ and
\begin{equation*}
    \operatorname{div}T(x,S)=0\quad\text{in }B
\end{equation*}
in the sense of distributions, where
$T:B\times\mathbb{R}^n\to\mathbb{R}^n$ satisfies the growth condition
\begin{equation}
|T(x,z)|\le L\left(a(x)\frac{G(|z|)}{|z|}+b(x)\frac{H(|z|)}{|z|}\right)
\end{equation}
for every $x\in B$ and $z\in\mathbb{R}^n$, with $L\ge1$.

Then
\begin{equation}
\int_B \langle T(x,S),D\varphi\rangle\,dx=0
\end{equation}
for any $\varphi\in W_0^{1,1}(B)$ satisfying
$\Psi(\cdot,D\varphi)\in L^1(B)$.
\end{lemma}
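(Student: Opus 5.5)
The plan is to approximate $\varphi$ by Lipschitz functions with zero trace, test the equation with them, and pass to the limit. We know $\int_B\langle T(x,S),D\psi\rangle\,dx=0$ for every $\psi\in C_0^\infty(B)$. By density, the identity then also holds for every $\psi\in W^{1,\infty}_0(B)$: mollify $\psi$ and use that $T(\cdot,S)\in L^1(B)$. This integrability follows from the growth bound, since for $|S|\ge1$ we have $\Psi(x,S)/|S|\le\Psi(x,S)$. For $|S|<1$ we use $G(t)/t\le G(1)$, $H(t)/t\le H(1)$ and the boundedness of $a,b$. Hence $|T(x,S)|\le c(1+\Psi(x,S))\in L^1(B)$.

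\textbf{Step 1 (approximation with zero trace).} Given $\varphi\in W^{1,1}_0(B)$ with $\Psi(\cdot,D\varphi)\in L^1(B)$, we construct $\varphi_k\in W^{1,\infty}_0(B)$ with $D\varphi_k\to D\varphi$ a.e. and $\Psi(\cdot,D\varphi_k)\to\Psi(\cdot,D\varphi)$ in $L^1(B)$. Theorem~\ref{thm:Lavrentiev} handles interior balls but not boundary values, so we adapt its proof. Extend $\varphi$ by zero outside $B$, so that $\Psi(\cdot,D\varphi)\in L^1(\mathbb R^n)$ after extending $a,b$ Hölder-continuously with $a+b\ge\mu/2$ near $\overline B$. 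Assume $B=B_R(x_0)$ and dilate: set $\varphi^{(\lambda)}(x):=\varphi(x_0+\lambda(x-x_0))$ with $\lambda>1$ close to $1$, which is supported in $B_{R/\lambda}$. Then mollify with radius $\varepsilon<R(1-1/\lambda)$, so the result lies in $C_0^\infty(B)$.

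The mollification is controlled exactly as in the proof of Theorem~\ref{thm:Lavrentiev}. Jensen's inequality gives $G(|D\varphi^{(\lambda)}_\varepsilon|)\le c\varepsilon^{-n}$, so the claim \eqref{eq:5} with \eqref{condition_2} yields $\Psi(x,D\varphi^{(\lambda)}_\varepsilon)\le c[\Psi(\cdot,D\varphi^{(\lambda)})*\rho_\varepsilon]+c$. The extra factor $\lambda$ is harmless: $G(\lambda t)\le\lambda^{s(G)+1}G(t)$ and similarly for $H$, by Lemma~\ref{lem:basic-N}. The coefficient shift $a(x)$ versus $a(x_0+\lambda(x-x_0))$ is at most $c(\lambda-1)^\alpha$, which is absorbed like the $\varepsilon^\alpha$ term, or more simply by taking $\lambda-1\le\varepsilon$. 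With this choice, continuity of translations and dilations in $L^1$ gives $L^1$ convergence of the majorants. A generalized dominated convergence argument along a subsequence then yields the required modular convergence and a.e. convergence of $D\varphi_k$.

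\textbf{Step 2 (passing to the limit).} We have $0=\int_B\langle T(x,S),D\varphi_k\rangle\,dx$ and need to let $k\to\infty$. Apply Young's inequality separately on the $G$- and $H$-parts:
\[
a\frac{G(|S|)}{|S|}|D\varphi_k|\le a\bigl(G^*(G(|S|)/|S|)+G(|D\varphi_k|)\bigr)\le c\,aG(|S|)+aG(|D\varphi_k|),
\]
using $G^*(G(t)/t)\le G(t)$. The analogous bound holds with $b$ and $H$. Thus the integrands are dominated by $c\Psi(x,S)+\Psi(x,D\varphi_k)$, a sequence converging in $L^1$. The integrands also converge a.e., so the generalized dominated convergence theorem gives $\int_B\langle T(x,S),D\varphi\rangle\,dx=0$.

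\textbf{Main obstacle.} The delicate step is Step 1, namely producing zero-trace approximants with convergent $\Psi$-modular despite the Lavrentiev gap. The dilation-then-mollification device must be coupled so that the coefficient mismatch stays of size $\varepsilon^\gamma$, which \eqref{condition_2} compensates exactly. This is where the borderline case $\kappa<\infty$ enters.
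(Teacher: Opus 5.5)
Your proposal is correct and follows the paper's proof essentially step by step: dilate $\varphi$ toward the center of $B$ (the paper rescales to $B_1$ and uses $\varphi(x/(1-2\varepsilon))$), mollify, and freeze $a,b$ by infima over $B_\varepsilon(x)$; then absorb the coefficient error using $G(|D\varphi_\varepsilon|)\le c\varepsilon^{-n}$ together with \eqref{condition_2}, and pass to the limit via the domination $|\langle T(x,S),D\varphi_k\rangle|\le c[\Psi(x,S)+\Psi(x,D\varphi_k)]$ and dominated convergence. Two details need care, and the paper handles both: first, the infima must be taken of the \emph{dilated} coefficients $y\mapsto a(x_0+\lambda(y-x_0))$, $y\mapsto b(x_0+\lambda(y-x_0))$ (the paper's $\tilde a,\tilde b$), so that the $b$-shift is absorbed on the mollified side and the majorant is the mollified dilation of $\Psi(\cdot,D\varphi)\in L^1$ --- your majorant $\Psi(\cdot,D\varphi^{(\lambda)})*\rho_\varepsilon$ contains $b(y)H(\lambda|D\varphi(x_0+\lambda(y-x_0))|)$, and the $b$-shift cannot be absorbed there because $H(|D\varphi|)$ need not be integrable; second, $\lambda-1\le\varepsilon$ is not admissible, since $\varepsilon<R(1-1/\lambda)$ forces $\lambda-1>\varepsilon/R$, so one takes $\lambda-1\sim\varepsilon/R$ (as the paper's factor $1/(1-2\varepsilon)$ does), which still keeps the spatial shift of order $\varepsilon$.
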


\begin{proof}
    By a standard scaling and translation argument, it is sufficient to consider the case
$B\equiv B_1(0)$. Extending $\varphi$ by zero outside $B$, we may assume that
$\varphi\in W_0^{1,\Psi}(\mathbb{R}^n)$. Moreover, by McShane's extension theorem \cite[Corollary 1]{McShane},
the coefficients $a$ and $b$ can be extended to $\mathbb{R}^n$ while preserving
their H\"older continuity with the same exponent $\alpha$ and $\beta$ respectively. Let $\rho\in C_0^\infty(B_1(0))$ be a non-negative mollifier satisfying
$\int_{\mathbb{R}^n}\rho\,dx=1,$
and define
$$\rho_\varepsilon(x):=\frac{1}{\varepsilon^n}
\rho\!\left(\frac{x}{\varepsilon}\right),
\qquad x\in B_\varepsilon(0).
$$
Then
$$\rho_\varepsilon\in C_0^\infty(B_\varepsilon),
\qquad
\int_{\mathbb{R}^n}\rho_\varepsilon\,dx=1,
\qquad
0\le\rho_\varepsilon\le c(n)\varepsilon^{-n}.$$

For $0<\varepsilon<\frac1{100}$, we define $\tilde{\varphi}(x):=\varphi\!\left(\frac{x}{1-2\varepsilon}\right)$ and observe that $\tilde{\varphi}$ vanishes outside the ball $B_{1-2\ve}$. Similarly, we set
$$\tilde{a}(x):=a\!\left(\frac{x}{1-2\varepsilon}\right),
\qquad
\tilde{b}(x):=b\!\left(\frac{x}{1-2\varepsilon}\right).$$
We consider the mollification $\varphi_\varepsilon(x):=(\tilde{\varphi}*\rho_\varepsilon)(x)\in C^\infty_0(B_{1-\ve})$ and introduce the auxilary functions 
$$a_\varepsilon(x):=\inf_{y\in B_\varepsilon(x)}\tilde{a}(y),
\qquad
b_\varepsilon(x):=\inf_{y\in B_\varepsilon(x)}\tilde{b}(y).$$
and
$$
\Psi_\varepsilon(x,z):=a_\varepsilon(x)G(|z|)+b_\varepsilon(x)H(|z|), \qquad \Psi_{\ve,1}(x,z):=\Psi_\varepsilon(x,z)+1.$$

By Jensen's inequality, we get
\begin{equation}\label{eq:13}
G(|D\varphi_\varepsilon(x)|)=G\!\left(|(D\tilde{\varphi}*\rho_\varepsilon)(x)|\right)\le\int_{\mathbb{R}^n}G(|D\tilde{\varphi}(x-y)|)
\rho_\varepsilon(y)\,dy\le c\,\varepsilon^{-n}.
\end{equation}
Then, for every $x\in B_1$, the definition of convolution and convexity of $G$ and $H$ yields
\begin{align}\label{eq:11}
\Psi_{\varepsilon,1}(x,D\varphi_\varepsilon(x))&\le \int_{B_\varepsilon(x)}\Psi_\varepsilon(y,D\tilde\varphi(y))\rho_\varepsilon(x-y)\,dy+1\nonum\\
&\le \frac{1}{(1-2\ve)^s}\int_{B_\varepsilon(x)}\Psi_\ve\!\left(x,D\varphi\left(\frac{y}{1-2\ve}\right)\right)\rho_\varepsilon(x-y)\,dy+c\nonum\\
&\le 2^s\int_{B_\varepsilon(x)}\Psi\!\left(\frac{y}{1-2\ve},D\varphi\left(\frac{y}{1-2\ve}\right)\right)\rho_\varepsilon(x-y)\,dy+c\nonum\\
&\le c\,\left[\Psi\left(\frac{\cdot}{1-2\ve},D\varphi\left(\frac{\cdot}{1-2\ve}\right)\right)*\rho_\varepsilon\right](x)+c,
\end{align}
where $s:=\max\{s(G)+1,s(H)
+1\}.$ Now using \eqref{eq:13}, we prove the estimate
\begin{align}\label{eq:12}
\Psi(x,D\varphi_\varepsilon(x))&\le |a(x)-a_\ve(x)|G(|D\varphi_\varepsilon(x)|)+|b(x)-b_\ve(x)|H(|D\varphi_\varepsilon(x)|)\nonum\\
&\qquad+\Psi_\ve(x,D\varphi_\varepsilon(x))\nonum\\
&\le cG(|D\varphi_\varepsilon(x)|)+c \kappa[b]_\beta \ve^\beta \left(1+G^{\gamma/n}(|D\varphi_\varepsilon(x)|)\right)G(|D\varphi_\varepsilon(x)|)\nonum\\
&\qquad+\Psi_{\ve,1}(x,D\varphi_\varepsilon(x))\nonum\\
&\le cG(|D\varphi_\varepsilon(x)|)+c\left(\ve^\beta+c^{\gamma/n}\ve^{\beta-\gamma}\right)G(|D\varphi_\varepsilon(x)|)+\Psi_{\ve,1}(x,D\varphi_\varepsilon(x))\nonum\\
&\le c\,\Psi_{\ve,1}(x,D\varphi_\varepsilon(x)).
\end{align}
In the last inequality, we used Lemma~\ref{lem:modular_equivalence} together with the fact that $\beta-\gamma$ is non-negative. From \eqref{eq:11} and \eqref{eq:12}, we conclude that
\begin{align}
\Psi(x,D\varphi_\varepsilon(x))\le c\,\left[\Psi\left(\frac{\cdot}{1-2\ve},D\varphi\left(\frac{\cdot}{1-2\ve}\right)\right)*\rho_\varepsilon\right](x)+c.
\end{align}
By the assumptions, we know that $\Psi\!\left(\frac{\cdot}{1-2\varepsilon},
D\varphi\!\left(\frac{\cdot}{1-2\varepsilon}\right)\right)
\in L^1(\mathbb{R}^n)$
and
$$
\left[
\Psi\!\left(
\frac{\cdot}{1-2\varepsilon},
D\varphi\!\left(\frac{\cdot}{1-2\varepsilon}\right)
\right)
*\rho_\varepsilon
\right](x)
\longrightarrow
\Psi(\cdot,D\varphi)
\quad\text{strongly in }L^1(B)
$$
as $\varepsilon\to0$. Hence, by the standard mollification argument, there exists a sequence
$\{\varphi_k\}\subset C_0^\infty(B)$
such that
\[\Psi(\cdot,D\varphi_k)\longrightarrow\Psi(\cdot,D\varphi)\quad\text{strongly in }L^1(B).
\]
Moreover, an application of Lemma \ref{lem:Young epsilon} yields
$$|\langle T(x,S),D\varphi_k\rangle|
\le
c\bigl[\Psi(x,S)+\Psi(x,D\varphi_k)\bigr].$$
Therefore, by the dominated convergence theorem,
\[\langle T(x,S),D\varphi_k\rangle\longrightarrow
\langle T(x,S),D\varphi\rangle\quad\text{strongly in }L^1(B).
\]
Passing to the limit in
\[\int_B\langle T(x,S),D\varphi_k\rangle\,dx=0\]
completes the proof.
\end{proof}

In the remainder of this section, we fix a ball $B:=B_R(x_0)\Subset\Omega,\,R\le 1$
and study the existence and regularity estimates for weak solutions to the Dirichlet problem
\begin{equation}\label{eq:dirichlet}
\begin{cases}
-\operatorname{div}\A(x,Dw)=0 & \text{in } B,\\[2mm]
w-w_0\in W_0^{1,\Psi}(B),
\end{cases}
\end{equation}
where the boundary datum
$w_0\in W^{1,\Psi}(B)$
satisfies
\begin{equation}\label{eq:boundary1}
\Psi(x,Dw_0)\in L^{1+\delta}(B)\qquad\text{for some }\delta>0,
\end{equation}
and
\begin{equation}\label{eq:boundary2}
\|\Psi(\cdot,Dw_0)\|_{L^1(B)}\le L_0
\end{equation}
for a prescribed constant $L_0\ge0$.

Throughout this section, the vector field $\A(\cdot,\cdot)$ is assumed to satisfy
\eqref{growth_conditions}. For each $m\in\mathbb N$, we introduce the regularized vector field
\begin{equation}\label{eq:regularized-vector-field}
\A_m(x,z):=\A(x,z)+\varepsilon_m K'(|z|)\frac{z}{|z|},
\end{equation}
where $K\in\mathcal N$ satisfies $s(K)\ge1$ and $\limsup_{t\to\infty}\frac{H(t)}{K(t)}<\infty,$
while $\{\varepsilon_m\}_{m=1}^\infty$ is a sequence of positive numbers with
$\varepsilon_m\downarrow0$.

Furthermore, let $\{w_m\}_{m=1}^\infty\subset W^{1,\infty}(B)$ be a sequence satisfying
\begin{equation*}
\int_B\Psi(x,Dw_m)\,dx
\longrightarrow
\int_B\Psi(x,Dw_0)\,dx,
\end{equation*}
and
\begin{equation*}
\int_B[\Psi(x,Dw_m)]^{1+\delta}\,dx
\longrightarrow
\int_B[\Psi(x,Dw_0)]^{1+\delta}\,dx.
\end{equation*}
The existence of such a sequence follows from Lemma~\ref{lem:approx-higher}.

For every $m\in\mathbb N$, let us consider another sequence $\{v_m\}_{m=1}^\infty$ denote the unique weak solution of
\begin{equation}\label{eq:regularized-dirichlet}
\begin{cases}
-\operatorname{div}\A_m(x,Dv_m)=0 & \text{in } B,\\[2mm]
v_m-w_m\in W_0^{1,K}(B).
\end{cases}
\end{equation}

By the standard regularity theory (see \cite{Esposito_2006}), it follows that
\begin{equation*}v_m\in W_{\mathrm{loc}}^{1,\infty}(B).
\end{equation*}

We first prove the following two higher integrability lemmas, originally established for double phase functionals in \cite{Mingione_2019}. These lemmas are used to establish the fractional differentiability result, a key step in the proof of the main theorem.

\begin{lemma}\label{lem:higher_int_local}
Assume that \eqref{coeff_1}--\eqref{condition_2} and \eqref{growth_conditions} are satisfied. Let the sequence $\{v_m\}_{m=1}^\infty\subset W^{1,\infty}_{\mathrm{loc}}(B)$ be the sequence of weak solutions to the perturbed problem \eqref{eq:regularized-dirichlet} and suppose that
\begin{equation}\label{eq:L1-bound}
\sup_{m\in\mathbb N}
\int_B\Psi(x,Dv_m)\,dx
\le L_1
\end{equation}
for some constant $L_1>0$. Then there exist constants
\[\delta_1\equiv\delta_1\bigl(n,s(G),s(H),s(K),\alpha,\beta,[a]_{0,\alpha},[b]_{0,\beta},L_1\bigr)>0
\]
and
\[c\equiv c\bigl(n,\kappa,s(G),s(H),s(K),\alpha,\beta,[a]_{0,\alpha},
[b]_{0,\beta},L_1\bigr)>0
\] such that
\begin{align}\label{eq:higher-int-local}
&\left(
\dashint_{B_\rho}
\bigl[\Psi(x,Dv_m)+\varepsilon_mK(|Dv_m|)\bigr]^{1+\delta_1}
\,dx
\right)^{\frac1{1+\delta_1}}\nonum\\
&\qquad\quad\le c\dashint_{B_{2\rho}}
\bigl[\Psi(x,Dv_m)+\varepsilon_mK(|Dv_m|)\bigr]
\,dx+c
\end{align}
holds for every ball $B_{2\rho}\subset B$ and every $m\in\mathbb N$.
\end{lemma}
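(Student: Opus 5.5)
We prove the estimate through a Caccioppoli inequality, the Sobolev--Poincar\'e inequality of Theorem~\ref{Sobolev-Poincare}, and Gehring's lemma. Throughout we write
\[
\Psi_m(x,z):=\Psi(x,z)+\varepsilon_mK(|z|).
\]

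\textbf{Step 1: Caccioppoli inequality.} Fix concentric balls $B_{r/2}\subset B_r\subset B_{2\rho}$ and a cut-off $\eta\in C_0^\infty(B_r)$ with $\eta\equiv1$ on $B_{r/2}$ and $|D\eta|\le 4/r$. Since $v_m\in W^{1,\infty}_{\rm loc}(B)$, the function $\varphi=\eta^{q}(v_m-(v_m)_{B_r})$ is an admissible test function in \eqref{eq:regularized-dirichlet} for a large power $q$. By \eqref{growth_conditions}, the monotonicity bound \eqref{eq:A-monotonicity} (together with the analogous bound for the $K$-part), and Lemma~\ref{lem:Young epsilon} applied separately to $G$, $H$ and $K$ (absorbing the weights $a(x)$, $b(x)$, $\varepsilon_m$, which are fixed at the point $x$), we expect
\[
\dashint_{B_{r/2}}\Psi_m(x,Dv_m)\,dx\le c\dashint_{B_r}\Psi_m\!\left(x,\frac{v_m-(v_m)_{B_r}}{r}\right)dx .
\]

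\textbf{Step 2: reverse H\"older inequality.} Apply Theorem~\ref{Sobolev-Poincare} to the right-hand side of Step 1 (here $r\le1$ since $R\le1$). The prefactor there contains $\bigl(\int_{B_r}G(|Dv_m|)\,dx\bigr)^{\gamma/n}$. By Lemma~\ref{lem:modular_equivalence}, this is bounded by $c\bigl(\int_B\Psi_1(x,Dv_m)\,dx\bigr)^{\gamma/n}\le c(L_1+|B|)^{\gamma/n}$, using \eqref{eq:L1-bound}. This uniform bound is why $\delta_1$ and $c$ depend on $L_1$.

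\textbf{Step 3: the $K$-term.} The $K$-term is handled separately, since $\Psi$ does not control it. Because $K\in\N$, Lemma~\ref{Sobolev_Poincare_Young} gives
\[
\varepsilon_m\dashint_{B_r}K\!\left(\frac{|v_m-(v_m)_{B_r}|}{r}\right)dx\le c\left(\dashint_{B_r}[\varepsilon_mK(|Dv_m|)]^{\theta_K}\,dx\right)^{1/\theta_K}.
\]
Combining Steps 1--3 and taking $\theta$ to be the largest of the exponents involved, we obtain
\[
\dashint_{B_{r/2}}\Psi_m(x,Dv_m)\,dx\le c\left(\dashint_{B_r}[\Psi_m(x,Dv_m)+1]^{\theta}\,dx\right)^{1/\theta},
\]
with $c$ independent of $m$ and of the ball.

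\textbf{Step 4: Gehring.} The Gehring lemma, in its local form with the additive constant $1$, then yields \eqref{eq:higher-int-local} with some $\delta_1>0$ depending only on the listed data.

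\textbf{Main obstacle.} The delicate point is Step 2. Theorem~\ref{Sobolev-Poincare} is stated for $\Psi_1$, and its constant must not degenerate. This requires the uniform bound on $\int G(|Dv_m|)\,dx$ from \eqref{eq:L1-bound} via Lemma~\ref{lem:modular_equivalence}, and care that the case split for the coefficients $a$ and $b$ (Cases (a)--(d) of Theorem~\ref{Sobolev-Poincare}) is carried out on each small ball $B_r$ and yields constants independent of $r$.
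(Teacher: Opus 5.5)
Your proposal is correct and follows essentially the same route as the paper. The paper's steps are: a Caccioppoli inequality from testing with $\eta^{s}\bigl(v_m-(v_m)_{B_{2\rho}}\bigr)$ and Lemma~\ref{lem:Young epsilon}; Theorem~\ref{Sobolev-Poincare} for the $\Psi$-part; Lemma~\ref{Sobolev_Poincare_Young} for the $\varepsilon_mK$-part; H\"older's inequality to merge these into a reverse H\"older inequality with exponent below one; and then Gehring's lemma. Your explicit bound on the prefactor $\bigl(\int_{B_r}G(|Dv_m|)\,dx\bigr)^{\gamma/n}$ via Lemma~\ref{lem:modular_equivalence} and \eqref{eq:L1-bound} is exactly what the paper leaves implicit, and it is why the constants depend on $L_1$.
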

\begin{proof}
Let $B_{2\rho}\subset B$ be arbitrary and choose a cut-off function
$\eta\in C_0^1(B_{2\rho})$ satisfying
\[
\chi_{B_\rho}\le \eta\le \chi_{B_{2\rho}},
\qquad
|D\eta|\le \frac{4}{\rho}.
\]
Fix $s\ge\max\{s(G),s(H),s(K)\}+1,$
and use the test function $\varphi=\eta^s\bigl(v_m-(v_m)_{B_{2\rho}}\bigr)$
in the weak formulation of \eqref{eq:regularized-dirichlet}. Since
$v_m\in W^{1,K}(B)$ for all $m$, the function $\varphi$ is admissible. Exploiting the
monotonicity of the vector field $\A(x,\cdot)$ together with Lemma~\ref{lem:Young epsilon}, for every $\varepsilon\in(0,1)$, we obtain
\begin{align*}
\int_{B_{2\rho}}\eta^s&\bigl[\Psi(x,Dv_m)+\varepsilon_mK(|Dv_m|)\bigr]\,dx\nonum\\
&\le c\int_{B_{2\rho}}\eta^{s-1}\Bigl(a(x)G'(|Dv_m|)+b(x)H'(|Dv_m|)\nonumber\\
&\qquad\qquad\qquad+\varepsilon_mK'(|Dv_m|)
\Bigr)\left|\frac{v_m-(v_m)_{B_{2\rho}}}{\rho}\right|dx\nonumber\\
&\le c\int_{B_{2\rho}}a(x)\eta^{s-1}\left(
\varepsilon\eta\,G(|Dv_m|)+\frac1{(\varepsilon\eta)^{s(G)}}G\!\left(
\left|\frac{v_m-(v_m)_{B_{2\rho}}}{\rho}\right|\right)\right)dx\nonumber\\
&\quad+c\int_{B_{2\rho}}
b(x)\eta^{s-1}\left(\varepsilon\eta\,H(|Dv_m|)+
\frac1{(\varepsilon\eta)^{s(H)}}
H\!\left(\left|\frac{v_m-(v_m)_{B_{2\rho}}}{\rho}\right|\right)\right)dx\nonumber\\
&\quad+c\varepsilon_m\int_{B_{2\rho}}
\eta^{s-1}\left(\varepsilon\eta\,K(|Dv_m|)+
\frac1{(\varepsilon\eta)^{s(K)}}K\!\left(
\left|\frac{v_m-(v_m)_{B_{2\rho}}}{\rho}\right|\right)\right)dx.
\end{align*}

Choosing $\varepsilon>0$ sufficiently small, the terms involving
$\Psi(x,Dv_m)$ and $\varepsilon_mK(|Dv_m|)$ on the right-hand side can be absorbed into the left-hand side. Consequently,
\begin{align}\label{eq:RH2}
\dashint_{B_\rho}
&\bigl[\Psi(x,Dv_m)+\varepsilon_mK(|Dv_m|)\bigr]\,dx\nonum\\
&\qquad\le c
\dashint_{B_{2\rho}}\Biggl[\Psi\!\left(
x,\frac{v_m-(v_m)_{B_{2\rho}}}{\rho}\right)+\varepsilon_m
K\!\left(\left|\frac{v_m-(v_m)_{B_{2\rho}}}{\rho}
\right|\right)\Biggr]dx.
\end{align}
Next, we estimate the two terms on the right-hand side of
\eqref{eq:RH2}. Since $v_m\in W^{1,K}(B)$, Theorem~\ref{Sobolev-Poincare} applied to $\Psi$ yields
\begin{equation}\label{eq:RH3}
\dashint_{B_{2\rho}}
\Psi\!\left(x,\frac{v_m-(v_m)_{B_{2\rho}}}{\rho}\right)\,dx\le c\left(
\dashint_{B_{2\rho}}[\Psi_1(x,Dv_m)]^{d_1}\,dx
\right)^{\frac1{d_1}},
\end{equation}
where $d_1\equiv d_1\bigl(n,s(G),s(H),\alpha\bigr)\in(0,1).$

On the other hand, applying the Sobolev-Poincar\'e inequality (see Lemma~\ref{Sobolev_Poincare_Young}) for the function $K$ gives
\begin{equation}\label{eq:RH4}
\dashint_{B_{2\rho}}
K\!\left(\left|\frac{v_m-(v_m)_{B_{2\rho}}}{\rho}
\right|\right)\,dx
\le c\left(\dashint_{B_{2\rho}}[K(|Dv_m|)]^{d_2}\,dx
\right)^{\frac1{d_2}},
\end{equation}
where $d_2\equiv d_2\bigl(n,s(K)\bigr)\in(0,1).$ Set $d_0:=\max\{d_1,d_2\}.$
Since $d_0\in(0,1)$, H\"older's inequality together with
\eqref{eq:RH3}--\eqref{eq:RH4} implies
\begin{align}
&\dashint_{B_{2\rho}}\left[\Psi\!\left(
x,\frac{v_m-(v_m)_{B_{2\rho}}}{\rho}\right)+\varepsilon_m K\!\left(
\left|\frac{v_m-(v_m)_{B_{2\rho}}}{\rho}
\right|\right)\right]dx\nonumber\\
&\qquad\le c\left(\dashint_{B_{2\rho}}
\bigl[\Psi_1(x,Dv_m)+\varepsilon_mK(|Dv_m|)
\bigr]^{d_0}\,dx\right)^{\frac1{d_0}}.
\label{eq:RH5}
\end{align}

Combining \eqref{eq:RH2} with \eqref{eq:RH5}, we arrive at
\begin{align*}
\dashint_{B_{\rho}}
\bigl[\Psi(x,Dv_m)&+\varepsilon_mK(|Dv_m|)
\bigr]\,dx\nonum\\
&\le c\left(\dashint_{B_{2\rho}}
\bigl[\Psi_1(x,Dv_m)+\varepsilon_mK(|Dv_m|)\bigr]^{d_0}
\,dx\right)^{\frac1{d_0}}\nonum\\
&\le c\left(\dashint_{B_{2\rho}}
\bigl[\Psi(x,Dv_m)+\varepsilon_mK(|Dv_m|)\bigr]^{d_0}
\,dx\right)^{\frac1{d_0}}+c.
\end{align*}
Applying the Gehring lemma (cf. \cite[Theorem 6.6]{2003_Giusti_Direct_methods_in_the_calculus_of_variations}), we conclude that there exist constants
\[\delta_1\equiv\delta_1\bigl(n,s(G),s(H),s(K),
[a]_{0,\alpha},[b]_{0,\beta},L_1\bigr)>0\]
and
\[c\equiv c\bigl(n,\kappa,s(G),s(H),s(K),[a]_{0,\alpha},[b]_{0,\beta},L_1\bigr)>0\]
such that \eqref{eq:higher-int-local} holds. This completes the proof.
\end{proof}
Next, we aim to prove the global version of the previous lemma.

\begin{lemma}\label{lem:higher_int_global}
Assume that \eqref{coeff_1}--\eqref{condition_2}, \eqref{growth_conditions}, \eqref{eq:boundary2} and \eqref{eq:L1-bound} are satisfied. Suppose that
\begin{equation*}
\sup_{m\in\mathbb N}\int_B
\Psi(x,Dw_m)\,dx\le L_2,
\end{equation*}
for some constant $L_2>0$. Then there exist constants
\[\delta_2\equiv\delta_2\bigl(n,s(G),s(H),s(K),\nu,L,\alpha,
[a]_{0,\alpha},\beta,[b]_{0,\beta},L_1,L_2\bigr)\in(0,\delta_1]
\]
and
\[c\equiv c\bigl(n,\kappa,s(G),s(H),s(K),\nu,L,
\alpha,[a]_{0,\alpha},\beta,[b]_{0,\beta},
L_1,L_2\bigr)>0\]
such that
\begin{align}\label{eq:19}
&\left(\dashint_B\bigl[\Psi(x,Dv_m)+\varepsilon_mK(|Dv_m|)
\bigr]^{1+\sigma}dx\right)^{\frac1{1+\sigma}}\nonum\\
&\quad\qquad\le c\left(\dashint_B\bigl[
\Psi(x,Dw_m)+\varepsilon_mK(|Dw_m|)\bigr]^{1+\sigma}dx\right)^{\frac1{1+\sigma}}+c
\end{align}
holds for every $\sigma\in(0,\delta_2)$ and every $m\in\mathbb N$.
\end{lemma}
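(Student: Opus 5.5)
The plan is to prove a global (up to the boundary) reverse Hölder inequality for $\Psi(x,Dv_m)+\varepsilon_mK(|Dv_m|)$, with the boundary datum $w_m$ entering as a lower-order term, and then to apply a Gehring lemma with an additional function. Throughout, we extend $v_m-w_m$ by zero outside $B$ and write $\Phi_m(x,z):=\Psi(x,z)+\varepsilon_mK(|z|)$.

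\textbf{Energy estimate.} Fix a ball $B_{2\rho}(y)$ with $y\in\overline B$ and $\rho\le R$. There are two regimes.
\begin{enumerate}
\item If $B_{3\rho/2}(y)\subset B$, the interior estimate \eqref{eq:RH2} from the proof of Lemma~\ref{lem:higher_int_local} applies directly.
\item If $B_{3\rho/2}(y)\not\subset B$, we test \eqref{eq:regularized-dirichlet} with $\varphi=\eta^s(v_m-w_m)$, where $\eta\in C_0^\infty(B_{2\rho}(y))$ is a cut-off and $s\ge\max\{s(G),s(H),s(K)\}+1$. This test function is admissible because $v_m-w_m\in W^{1,K}_0(B)$.
\end{enumerate}
In the second case we use the monotonicity \eqref{eq:A-monotonicity}, the growth bounds \eqref{growth_conditions}$_1$ and Lemma~\ref{lem:Young epsilon}, then absorb the terms containing $Dv_m$. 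This should give
\[
\dashint_{B_\rho(y)}\Phi_m(x,Dv_m)\chi_B\,dx\le c\dashint_{B_{2\rho}(y)}\Phi_m\Bigl(x,\frac{v_m-w_m}{\rho}\Bigr)dx+c\dashint_{B_{2\rho}(y)}\Phi_m(x,Dw_m)\chi_B\,dx .
\]

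\textbf{Boundary Sobolev--Poincaré step.} We need
\[
\dashint_{B_{2\rho}(y)}\Phi_m\Bigl(x,\frac{v_m-w_m}{\rho}\Bigr)dx\le c\Bigl(\dashint_{B_{2\rho}(y)}[\Phi_m(x,D(v_m-w_m))+1]^{d}\,dx\Bigr)^{1/d}
\]
for some $d\in(0,1)$. The function $v_m-w_m$ vanishes on $B_{2\rho}(y)\setminus B$, a set of measure comparable to $|B_{2\rho}(y)|$ by the exterior ball (measure density) property of $B$. So the Poincaré inequality holds without subtracting the mean, which is the boundary version of Lemma~\ref{Sobolev_Poincare_Young}.

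To pass this to $\Phi_m$ we repeat the case analysis of Theorem~\ref{Sobolev-Poincare}:
\begin{itemize}
\item Case~(a): $a$ is small on the ball.
\item Case~(b): $a$ is comparable to a constant on the ball, so $a$ is frozen.
\item Case~(c): $b$ is comparable to a constant on the ball, so $b$ is frozen.
\item Case~(d): $b\le 4[b]_{0,\beta}\rho^\beta$ on the ball.
\end{itemize}
The $K$-part is handled by the Orlicz version of the same inequality.

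\textbf{Main obstacle: Case~(d).} Here $H\le\kappa(G+G^{1+\gamma/n})$ produces a factor $\rho^{\beta}(\int_{B_{2\rho}}G(|D(v_m-w_m)|))^{\gamma/n}\rho^{-\gamma}$. This factor is bounded by a constant depending on $L_1,L_2$, using $\rho^{\beta-\gamma}\le1$, $|B_{2\rho}|^{\gamma/n}\sim\rho^{\gamma}$, and Lemma~\ref{lem:modular_equivalence} to bound $\int_B G(|Dv_m|)+G(|Dw_m|)$ by $c(L_1+L_2+|B|)$. This is exactly why the constants depend on $L_1,L_2$. We also need to check that $d$ can be chosen uniformly in $m$, so that the $\varepsilon_mK$ term does not spoil the borderline exponent $1+\gamma/n$. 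Since $K$ only enters through its own Poincaré inequality with exponent $d_2(n,s(K))$, this should be fine.

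\textbf{Gehring step.} Combining the two steps and using $\Phi_m(x,D(v_m-w_m))\le c[\Phi_m(x,Dv_m)+\Phi_m(x,Dw_m)]$, we get for every ball $B_{2\rho}(y)$
\[
\dashint_{B_\rho}f\,dx\le c\Bigl(\dashint_{B_{2\rho}}f^{d}\,dx\Bigr)^{1/d}+c\dashint_{B_{2\rho}}g\,dx+c,
\]
where $f:=\Phi_m(x,Dv_m)\chi_B$ and $g:=\Phi_m(x,Dw_m)\chi_B$. The function $g$ belongs to $L^{1+\delta}$, since $w_m\in W^{1,\infty}$ with uniformly controlled $L^{1+\delta}$ norm. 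The Gehring lemma with an additional function then yields $\delta_2\in(0,\delta_1]$ and the estimate \eqref{eq:19} for every $\sigma\in(0,\delta_2)$, after covering $B$ by finitely many such balls and normalizing.
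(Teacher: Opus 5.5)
Your proposal follows essentially the same route as the paper: a boundary Caccioppoli inequality obtained by testing with $\eta^s(v_m-w_m)$, a Sobolev--Poincar\'e inequality of the type of Theorem~\ref{Sobolev-Poincare} for $v_m-w_m$ (which vanishes on a portion of $B_{2\rho}(y)$ of comparable measure), and Gehring's lemma with the additional function $[\Psi_1(x,Dw_m)+\varepsilon_mK(|Dw_m|)]\chi_B$. The one step you leave implicit is the global energy bound $\int_B[\Psi(x,Dv_m)+\varepsilon_mK(|Dv_m|)]\,dx\le c\int_B[\Psi(x,Dw_m)+\varepsilon_mK(|Dw_m|)]\,dx$, obtained by testing with $v_m-w_m$; the paper proves this first, and you need it to absorb the $\dashint f$ term produced by Gehring's lemma into the right-hand side of \eqref{eq:19}.
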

\begin{proof}
We first derive a global energy estimate for the approximate solutions.
Choosing $\varphi=v_m-w_m$ as a test function in the weak formulation of the regularized problem \eqref{eq:regularized-dirichlet}, we obtain
\[\int_B\langle \A_m(x,Dv_m),D(v_m-w_m)\rangle\,dx=0.\]
Using the monotonicity of $\A_m$, followed by \ref{eq:VPhi-est}, gives
\begin{equation}\label{seq_wm_vm}
    \int_B\bigl[\Psi(x,Dv_m)+\varepsilon_mK(|Dv_m|)\bigr]\,dx\le
c\int_B\bigl[\Psi(x,Dw_m)+\varepsilon_mK(|Dw_m|)\bigr]\,dx.
\end{equation}

Let
$B_{2\rho}(y)\subset\mathbb R^n$
be a ball with centre $y\in B$ satisfying $$\frac1{10}|B_{2\rho}(y)|<|B_{2\rho}(y)\setminus B|,$$
and let $\varphi=\eta^s(v_m-w_m),$
where $s\ge\max\{s(G),s(H),s(K)+1\}$
and
$\eta\in C_0^\infty(B_{2\rho}(y))$
is a cut-off function satisfying
\[\chi_{B_\rho}\le\eta
\le\chi_{B_{2\rho}},\qquad
|D\eta|\le\frac4\rho.\]
Since
$\operatorname{supp}\varphi
\subset
B\cap B_{2\rho}(y)$,
the function $\varphi$ is admissible in the weak formulation of
\eqref{eq:regularized-dirichlet}. Consequently,
\begin{align}\label{eq:18}
&\int_{B\cap B_{2\rho}(y)}\eta^s\bigl[\Psi(x,Dv_m)
+\varepsilon_mK(|Dv_m|)\bigr]\,dx\nonumber\\
&\qquad\le c\int_{B\cap B_{2\rho}(y)}
\eta^{s-1}|\A_m(x,Dv_m)|\left|\frac{v_m-w_m}{\rho}\right|\,dx\nonumber\\
&\qquad\qquad+c\int_{B\cap B_{2\rho}(y)}\eta^s
|\A_m(x,Dv_m)||Dw_m|\,dx=:I_1+I_2,
\end{align}
here $c\equiv c(n,\nu,L,s(G),s(K),s(H))$ is positive constant. We first estimate the term $I_1$. Proceeding exactly as in the proof of
Lemma~\ref{lem:higher_int_local}, for every $\varepsilon\in(0,1)$ we obtain
\begin{align}|I_1|&\le c\varepsilon\int_{B\cap B_{2\rho}(y)}\eta^s\bigl[\Psi(x,Dv_m)+\varepsilon_mK(|Dv_m|)\bigr]\,dx\nonumber\\
&\quad+c_\varepsilon\int_{B\cap B_{2\rho}(y)}\left[\Psi\!\left(x,\frac{v_m-w_m}{\rho}\right)+\varepsilon_m
K\!\left(\frac{|v_m-w_m|}{\rho}\right)\right]dx,
\end{align}
where
\[c\equiv c\bigl(n,s(G),s(H),s(K),\nu,L\bigr),\]
and
\[c_\varepsilon\equiv c_\varepsilon\bigl(
n,s(G),s(H),s(K),\nu,L,\varepsilon\bigr).\]

Similarly, applying Lemma~\ref{lem:Young epsilon} to the second integral gives
\begin{align}
|I_2|&\le c\varepsilon\int_{B\cap B_{2\rho}(y)}
\eta^s\bigl[\Psi(x,Dv_m)+\varepsilon_mK(|Dv_m|)\bigr]\,dx\nonumber\\
&\quad+c_\varepsilon\int_{B\cap B_{2\rho}(y)}
\bigl[\Psi(x,Dw_m)+\varepsilon_mK(|Dw_m|)\bigr]\,dx.\label{eq:17}
\end{align}

Choosing $\varepsilon>0$ sufficiently small and combining
\eqref{eq:18}--\eqref{eq:17}, we arrive at
\begin{align}
&\int_{B\cap B_{2\rho}(y)}\bigl[\Psi(x,Dv_m)+\varepsilon_mK(|Dv_m|)\bigr]\,dx\nonumber\\
&\qquad\le c\int_{B\cap B_{2\rho}(y)}
\left[\Psi\!\left(x,\frac{v_m-w_m}{\rho}
\right)+\varepsilon_m
K\!\left(\frac{|v_m-w_m|}{\rho}\right)\right]dx\nonumber\\
&\qquad\quad+c\int_{B\cap B_{2\rho}(y)}
\bigl[\Psi(x,Dw_m)+\varepsilon_mK(|Dw_m|)\bigr]\,dx.
\end{align}

Applying Theorem~\ref{Sobolev-Poincare}, we obtain
\begin{align*}
&\dashint_{B\cap B_{2\rho}(y)}\Psi\!\left(x,\frac{v_m-w_m}{\rho}\right)\,dx
\le c\left(\dashint_{B\cap B_{2\rho}(y)}[\Psi_1(x,Dv_m-Dw_m)]^{d_1}\,dx\right)^{\frac1{d_1}} \nonumber\\
&\quad\le c\left(\dashint_{B\cap B_{2\rho}(y)}[\Psi_1(x,Dv_m)]^{d_1}\,dx\right)^{\frac1{d_1}}
+c\dashint_{B\cap B_{2\rho}(y)}\Psi_1(x,Dw_m)\,dx,
\end{align*}
where $c$ and $d_1$ are the constants obtained in \eqref{eq:RH3}, with the additional dependence on $L_2$. Proceeding exactly as in the proof of Lemma~\ref{lem:higher_int_local}, we also obtain
\begin{align}
&\dashint_{B\cap B_{2\rho}(y)}[\Psi(x,Dv_m)+\varepsilon_mK(|Dv_m|)]\,dx\nonumber\\
&\quad\le c\left(\dashint_{B\cap B_{2\rho}(y)}[\Psi_1(x,Dv_m)+\varepsilon_mK(|Dv_m|)]^{\tilde d}\,dx\right)^{\frac1{\tilde d}} \nonumber\\
&\qquad+c\dashint_{B\cap B_{2\rho}(y)}[\Psi_1(x,Dw_m)+\varepsilon_mK(|Dw_m|)]\,dx,
\label{eq:15}
\end{align}
where
\[c\equiv c(L_1,L_2,n,s(G),s(H),s(K),\nu,L,\alpha,\beta,[a]_{0,\alpha},[b]_{0,\beta}).\]

If $B_{2\rho}(y)\Subset B$, then the argument is identical to the interior case treated in Lemma~\ref{lem:higher_int_local}. Otherwise, define
\begin{align*}
    V_m(x)&:=[\Psi(x,Dv_m)+\varepsilon_mK(|Dv_m|)]^{\tilde d}\chi_B(x),\\
U_m(x)&:=[\Psi_1(x,Dw_m)+\varepsilon_mK(|Dw_m|)]\chi_B(x).
\end{align*}

Then \eqref{eq:15} can be rewritten as
\begin{equation}
\dashint_{B_\rho(y)}V_m(x)^{1/\tilde d}\,dx
\le c\left(\dashint_{B_{2\rho}(y)}V_m(x)\,dx\right)^{1/\tilde d}
+c\dashint_{B_{2\rho}(y)}U_m(x)\,dx+c.
\label{eq:14}
\end{equation}

After a suitable rearrangement of \eqref{eq:14}, we apply a version of Gehring's lemma (see \cite[Theorem 6.6]{2003_Giusti_Direct_methods_in_the_calculus_of_variations}) and conclude that there exists $\delta_2\in(0,\delta_1]$ such that \eqref{eq:19} holds. This completes the proof.
\end{proof}
Now, we present the last theorem of this section, which eventually plays an important role in the proof of the main result of this paper.

\begin{theorem}\label{thm:fractional_diff}
Under the assumptions \eqref{coeff_1}--\eqref{condition_2}, \eqref{growth_conditions}, \eqref{eq:boundary1} and \eqref{eq:boundary2}, there exists a unique solution $w\in w_0+W^{1,\Psi}_0(B)$ to \eqref{eq:dirichlet} satisfying
\begin{equation}\label{eq:20}
G(|Dw|)\in L^{\frac{n}{n-2\varrho}}_{\mathrm{loc}}(B)\cap W^{\varrho,2}_{\mathrm{loc}}(B)
\end{equation}
for every $\varrho<\frac{\gamma}{2}$, together with the energy estimate
\begin{equation*}
\int_B\Psi(x,Dw)\,dx\le c\int_B\Psi(x,Dw_0)\,dx,
\end{equation*}
where $c\equiv c(n,s(G),s(H),\nu,L)$.

Moreover, the global higher integrability estimate
\begin{equation*}
\int_B[\Psi(x,Dw)]^{1+\sigma}\,dx\le c^*\int_B[\Psi(x,Dw_0)]^{1+\sigma}\,dx+c^*
\end{equation*}
holds for some constants $c^*,\,\sigma\equiv c^*,\,\sigma(n,s(G),s(H),\gamma,\nu,L_0,L_1),$ with $\sigma<\delta$.

Furthermore, the solution $w$ can be obtained as the limit of the sequence $\{v_m\}_{m=1}^\infty$ solving \eqref{eq:regularized-dirichlet}, in the sense that, up to a subsequence,
\begin{equation*}
v_m\rightharpoonup w \quad\text{in }W^{1,G^{1+\sigma}}(B),\qquad
v_m\to w \quad\text{in }W^{1,G^{\frac{n}{n-2\varrho}}}(B)
\end{equation*}
for every $\varrho<\frac{\gamma}{2}$. In particular, we can conclude that
\[
H(|Dw|)\in L^1_{\mathrm{loc}}(B),\qquad
G^{1+\frac{\gamma}{n}}(|Dw|)\in L^1_{\mathrm{loc}}(B).
\]
\end{theorem}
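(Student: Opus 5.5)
We will work with the regularized solutions $v_m$ of \eqref{eq:regularized-dirichlet} and obtain $w$ as their limit. First, testing \eqref{eq:regularized-dirichlet} with $v_m-w_m$ gives the uniform energy bound \eqref{seq_wm_vm}. Since $\int_B\Psi(x,Dw_m)\,dx\to\int_B\Psi(x,Dw_0)\,dx$, and since we may choose $\varepsilon_m$ so small that $\varepsilon_m\int_B K(|Dw_m|)\,dx\to0$ (the $w_m$ are Lipschitz), the hypothesis \eqref{eq:L1-bound} holds with $L_1$ depending on $L_0$. Lemmas~\ref{lem:higher_int_local} and \ref{lem:higher_int_global} then give uniform local and global $L^{1+\sigma}$ bounds on $\Psi(x,Dv_m)+\varepsilon_mK(|Dv_m|)$ for $\sigma<\min\{\delta_2,\delta\}$, controlled by $\int_B[\Psi(x,Dw_0)]^{1+\sigma}dx$ plus a constant. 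By Lemma~\ref{lem:modular_equivalence}, $G(|Dv_m|)$ is bounded in $L^{1+\sigma}(B)$. Hence, up to a subsequence, $v_m\rightharpoonup w$ in $W^{1,G^{1+\sigma}}(B)$, and lower semicontinuity yields both the energy and the higher integrability estimates for $w$.

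The core step is a uniform Nikolski-type estimate for $V_G(Dv_m)$ on interior balls $B_\rho\subset B_{\rho'}\Subset B$. We test \eqref{eq:regularized-dirichlet} with $\varphi=\tau_{-h}(\eta^2\tau_hv_m)$, which is admissible since $v_m\in W^{1,\infty}_{\loc}$. This produces the difference $\A_m(x+h,Dv_m(x+h))-\A_m(x,Dv_m(x))$, which we split as
\[
[\A_m(x+h,Dv_m(x+h))-\A_m(x+h,Dv_m(x))]+[\A(x+h,Dv_m(x))-\A(x,Dv_m(x))].
\]
By \eqref{eq:A-monotonicity}, the first bracket controls $a(x+h)|\tau_hV_G(Dv_m)|^2+b(x+h)|\tau_hV_H(Dv_m)|^2$. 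By the Remark after \eqref{growth_conditions}, the second bracket is bounded by
\[
\frac{|h|^\alpha G(|Dv_m|)+|h|^\beta H(|Dv_m|)}{|Dv_m|}|\tau_hDv_m|.
\]
The cut-off terms are handled through \eqref{eq:A-difference}, Young's inequality (Lemma~\ref{lem:Young epsilon}) and Lemma~\ref{lem:finite-difference}. Absorbing terms and using $|h|\le1$, $\gamma=\min\{\alpha,\beta\}$, we aim for
\[
\int \eta^2\bigl(a|\tau_hV_G|^2+b|\tau_hV_H|^2\bigr)\,dx\le c|h|^{\gamma}\int_{B_{\rho'}}\bigl[\Psi(x,Dv_m)+H(|Dv_m|)+1\bigr]dx.
\]
The right-hand side requires $H(|Dv_m|)\in L^1$ uniformly. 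By \eqref{condition_2}, this reduces to uniform $G^{1+\gamma/n}$ integrability, which is exactly what the argument is meant to produce; this circularity is the main obstacle.

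To break the circularity, we would bootstrap. Starting from $G^{1+\sigma}$ integrability, an estimate of the form $|h|^{\gamma}$ times an $L^1$ norm of $G^{1+\sigma'}$ yields, via Lemma~\ref{lem:nikolski} applied to $f=V_G(Dv_m)$ (note $|V_G(z)|^2\approx G(|z|)$), integrability of $G$ to the power $n/(n-2\varrho)$. This exponent exceeds $1+\gamma/n$ when $\varrho$ is close to $\gamma/2$, and the increments are iterated finitely many times. To make the $G$-part of the monotonicity usable, we split each small ball according to whether $a\ge\mu/2$. Where it is, $a|\tau_hV_G|^2\gtrsim|\tau_hV_G|^2$. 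Where it is not, $b\ge\mu/2$, and we pass from $|\tau_hV_H|^2$ to $|\tau_hV_G|^2$ using $G\prec H$, following the two-region idea described in the introduction. In the borderline case $q/p=1+\gamma/n$, the term $|h|^\beta H$ is controlled only by $G^{1+\gamma/n}$. We therefore keep a smallness factor by interpolating, as in the second term of the proof of Theorem~\ref{Sobolev-Poincare}: $\bigl(\int G\bigr)^{\gamma/n}$ is small on small balls, which allows absorption into the left-hand side.

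With the uniform bounds in hand, we conclude as follows. Lemma~\ref{lem:nikolski} gives uniform bounds for $G(|Dv_m|)$ in $L^{n/(n-2\varrho)}_{\loc}\cap W^{\varrho,2}_{\loc}$. Compactness of the fractional embedding gives $V_G(Dv_m)\to V_G(Dw)$ strongly in $L^2_{\loc}$, hence $Dv_m\to Dw$ a.e., and Vitali's theorem yields strong convergence in $W^{1,G^{n/(n-2\varrho)}}_{\loc}$. We then pass to the limit in the weak formulation, using a.e. convergence together with the equi-integrability of $\Psi(x,Dv_m)$ and $\varepsilon_mK'(|Dv_m|)\to0$ in $L^1$. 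Lemma~\ref{lem:existence} extends the test class to $W^{1,\Psi}_0$, and uniqueness follows from the strict monotonicity \eqref{eq:A-monotonicity}. Finally, since $n/(n-2\varrho)>1+\gamma/n$ for $\varrho$ near $\gamma/2$, condition \eqref{condition_2} gives $H(|Dw|)\in L^1_{\loc}$ and $G^{1+\gamma/n}(|Dw|)\in L^1_{\loc}$.
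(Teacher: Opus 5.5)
Your overall architecture is the paper's: regularization by $\A_m$, uniform Gehring bounds from Lemmas~\ref{lem:higher_int_local} and \ref{lem:higher_int_global}, the test function $\tau_{-h}(\eta^2\tau_hv_m)$, Lemma~\ref{lem:nikolski}, then compactness. You also locate the obstruction correctly: the terms carrying $[b]_{0,\beta}|h|^\beta H'(\cdot)$ put $\int K(|Dv_m|)\,dx$, with $K=G^{1+\gamma/n}$, on the right-hand side, exactly as in \eqref{eq:26}.

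Neither device you propose removes it, so there is a genuine gap at the key step.

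\emph{The bootstrap cannot start.} In the borderline case, \eqref{condition_2} gives no bound of $H$ by $G^{1+\sigma'}$ with $\sigma'<\gamma/n$. So at every stage the difference-quotient estimate still needs $\int G^{1+\gamma/n}(|Dv_m|)\,dx$. An estimate ``$|h|^\gamma$ times $\int G^{1+\sigma'}$'' is never available, and if it were, one step would already suffice.

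\emph{The smallness of $(\int_{B_r}G)^{\gamma/n}$ does not help.} Interpolating between $L^1$ and $L^{n/(n-2\varrho)}$ gives
\[
\int_{B_r}G^{1+\frac{\gamma}{n}}(|Dv_m|)\,dx\le\Bigl(\int_{B_r}G(|Dv_m|)\,dx\Bigr)^{1+\frac{\gamma}{n}-\frac{\gamma}{2\varrho}}\|G(|Dv_m|)\|_{L^{\frac{n}{n-2\varrho}}(B_r)}^{\frac{\gamma}{2\varrho}}.
\]
The power $\gamma/(2\varrho)$ on the very norm you are trying to bound exceeds $1$ for every $\varrho<\gamma/2$. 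It equals $1$, with exactly your factor $(\int G)^{\gamma/n}$, only at the endpoint $\varrho=\gamma/2$, which Lemma~\ref{lem:nikolski} never reaches. So nothing can be absorbed. In any case, absorption cannot happen in the left-hand side of the Caccioppoli inequality, which only controls finite differences of $V_G$ and $V_H$.

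\emph{The missing idea} is to interpolate against the Gehring exponent instead of $L^1$. The paper imports this from \cite[Theorem~5.1, Steps~4--5]{Baasandorj2020}; it is what the restriction $\varrho\in(\gamma/(2(1+\sigma)),\gamma/2)$ and the exponents $b_1,b_2$ in \eqref{eq:25} encode.

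\begin{enumerate}
\item Take $0<\sigma<\min\{\delta_2/2,\gamma/n\}$, so that $G(|Dv_m|)$ is bounded in $L^{1+\sigma}(B)$ uniformly in $m$.
\item Interpolate $L^{1+\gamma/n}$ between $L^{1+\sigma}$ and $L^{n/(n-2\varrho)}$. This puts the power
\[
\frac{\gamma-n\sigma}{2\varrho(1+\sigma)-n\sigma}
\]
on $\|G(|Dv_m|)\|_{L^{n/(n-2\varrho)}}$, and this power is $<1$ exactly when $\varrho>\gamma/(2(1+\sigma))$.
\item Young's inequality then gives a small multiple of that norm plus a term of the form $(\|b\|_{L^\infty}+R^\beta[b]_{0,\beta})^{b_1}(\dashint_{B_R}G^{1+\sigma}(|Dv_m|)\,dx)^{b_2}$, up to powers of $R$.
\item Reabsorb the small multiple at the level of the $L^{n/(n-2\varrho)}$ norm. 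Apply the scaled estimate on small balls, cover $B_t$ by balls whose doubles lie in $B_s$, and run the iteration lemma in $t<s$. This is legitimate because, for fixed $m$, the norm is finite on compact subsets, since $v_m\in W^{1,\infty}_{\loc}(B)$.
\end{enumerate}

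Only after this do the uniform bounds, the compactness and the conclusions $H(|Dw|),G^{1+\gamma/n}(|Dw|)\in L^1_{\loc}(B)$ follow as you describe.

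A smaller point concerns regions where only $b\ge\mu/2$. There you cannot pass from $|\tau_hV_H|^2$ to $|\tau_hV_G|^2$ via $G\prec H$, because $G(t)\lesssim H(t)$ only for $t\ge G^{-1}(1)$. Where $s=|Dv_m(x)|+|Dv_m(x+h)|$ is small, one only has $|\tau_hV_G(Dv_m)|^2\lesssim G''(s)s^2\lesssim1$, with no power of $|h|$. The paper instead applies Lemma~\ref{lem:nikolski} to $V_H(Dv_m)$ in these regions. It transfers to $G$ only after the embedding, using $G\le c(H+1)$.
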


\begin{proof} The proof consists of five steps. We present the first three steps in detail, whereas the last two can be obtained by adapting the corresponding arguments from \cite[Theorem 5.1]{Baasandorj2020}.

\medskip
\noi\textbf{Step 1: Approximation.}
For each $m\in\mathbb{N}$ and $\sigma$ which we will define after a few lines, denote the quantity
\begin{equation}
\varepsilon_m:=
\left(
m+
\left(\int_B G^{1+\frac{\gamma}{n}}(|Dw_m|)\,dx\right)^3+
\left(\int_B G^{\left(1+\frac{\gamma}{n}\right)(1+\sigma)}(|Dw_m|)\,dx\right)^3
\right)^{-1}.
\label{eq:eps}
\end{equation}
Since $\{w_m\}_{m=1}^\infty\subset W^{1,\infty}(B)$, the sequence $\varepsilon_m$ is well defined and finite. Moreover, the following convergence holds
\[
\lim_{m\to\infty}\varepsilon_m
\int_B G^{1+\frac{\gamma}{n}}(|Dw_m|)\,dx=0.
\]

Throughout the proof, we set
\[
K(t):=G(t)^{1+\frac{\gamma}{n}},
\]
which is a Young function belonging to $C^1([0,\infty))\cap C^2((0,\infty))$. Also, we choose this $\varepsilon_m$ in \eqref{eq:regularized-vector-field}. Moreover, assumption \eqref{condition_2} yields
\[
\limsup_{t\to\infty}\frac{H(t)}{K(t)}\le \kappa<\infty.
\]

Let $v_m$ denote the solution of the regularized problem. Its weak formulation is given by
\begin{equation*}
    \int_B\langle \A_m(x,Dv_m),D\varphi\rangle\,dx=0
\qquad\text{for every }\varphi\in W^{1,K}_0(B).
\end{equation*}

By the corresponding energy estimate \eqref{seq_wm_vm}, we get
\begin{equation}
\int_B\bigl[\Psi(x,Dv_m)+\varepsilon_mK(|Dv_m|)\bigr]\,dx
\le
c\int_B\bigl[\Psi(x,Dw_m)+\varepsilon_mK(|Dw_m|)\bigr]\,dx,
\label{eq:energy_vm}
\end{equation}
where $c=c(n,s(G),s(H),\nu,L)$.

Since $w_m\to w_0$ in energy, for all sufficiently large $m$, we get
\[
\int_B\Psi(x,Dw_m)\,dx\le2\int_B\Psi(x,Dw_0)\,dx=:L_2.
\]
Combining this with \eqref{eq:eps} and \eqref{eq:energy_vm}, we obtain
\[
\int_B\Psi(x,Dv_m)\,dx\le c\int_B\Psi(x,Dw_0)\,dx+c=:L_1,
\]
where $c=c(n,s(G),s(H),\nu,L)$.

Therefore, Lemmas~\ref{lem:higher_int_local} and \ref{lem:higher_int_global} are applicable, yielding higher integrability exponents $\delta_1$ and $\delta_2$, with $\delta_2\le\delta_1$. We then choose
\[
0<\sigma<\min\left\{\frac{\delta_2}{2},\frac{\gamma}{n}\right\}.
\]

Applying Lemmas~\ref{lem:modular_equivalence} and \ref{lem:higher_int_global}, together with the definition of $\varepsilon_m$, gives
\begin{align}\label{eq:24}
    \int_BG^{1+\sigma}(|Dv_m|)\,dx&\le c\int_B\bigl[\Psi_1(x,Dv_m)+\varepsilon_mK(|Dv_m|)\bigr]^{1+\sigma}\,dx\nonum\\
&\le c\int_B\bigl[\Psi(x,Dw_m)+\varepsilon_mK(|Dw_m|)\bigr]^{1+\sigma}\,dx+c\\
&\le c\int_B[\Psi(x,Dw_0)]^{1+\sigma}\,dx+c,\nonum
\end{align}
where
\[c=c(n,s(G),s(H),\nu,L,\alpha,\beta,[a]_{0,\alpha},[b]_{0,\beta},L_0).\]

Hence, up to a subsequence, we conclude
\[v_m\rightharpoonup w\quad\text{in }W^{1,G^{1+\sigma}}(B)\]
for some
\[w\in w_0+W^{1,G^{1+\sigma}}_0(B).\]
Finally, by lower semicontinuity in \eqref{eq:24} and \eqref{eq:energy_vm}, yields
\[\int_B\Psi(x,Dw)\,dx\le c\int_B\Psi(x,Dw_0)\,dx\]
and
\[\int_B[\Psi(x,Dw)]^{1+\sigma}\,dx\le c^*\int_B[\Psi(x,Dw_0)]^{1+\sigma}\,dx+c^*\]
where
\[c=c(n,s(G),s(H),\nu,L),\qquad c^*=c^*(n,s(G),s(H),\nu,L,\alpha,\beta,[a]_{0,\alpha},[b]_{0,\beta},L_0).\]

\medskip
\noi\textbf{Step 2: Scaling and proof of \eqref{eq:20}.}
In this step we establish the fractional differentiability estimate. More precisely, let
\[\varrho\in\left(\frac{\gamma}{2(1+\sigma)},\frac{\gamma}{2}\right),\]
where $\sigma$ is chosen in Step~1. We claim that there exists a constant
\[c=c(n,s(G),s(H),\nu,L,\varrho)>0\]
such that
\begin{align}\label{eq:25}
&\left(\dashint_{B_{R/2}(x_0)}
G^{\frac{n}{n-2\varrho}}(|Dw|)\,dx
\right)^{\frac{n-2\varrho}{n}}\nonum\\
&\le
c\Bigl(\|a\|_{L^\infty(B)}+\|b\|_{L^\infty(B)}
+R^\alpha[a]_{0,\alpha;B}+R^\beta[b]_{0,\beta;B}\Bigr)
\dashint_{B_R(x_0)}G(|Dw|)\,dx  \nonum\\
&\quad+cR^{2\varrho+n(b_2-1)}
\Bigl(\|b\|_{L^\infty(B)}
+R^\beta[b]_{0,\beta;B}\Bigr)^{b_1}
R^{-2b_1\varrho}
\left(
\dashint_{B_R(x_0)}
G^{1+\sigma}(|Dw|)\,dx
\right)^{b_2}\nonum\\
&\quad+cR^{n-2\varrho},
\end{align}
where
\[
b_1=\frac{2\varrho(1+\sigma)-n\sigma}{2\varrho(1+\sigma)-\gamma}\ge1
\quad \text{ and }\quad
b_2=\frac{2\varrho(n+\gamma)-n\gamma}
{n\bigl(2\varrho(1+\sigma)-\gamma\bigr)}\ge1.
\]

To simplify the argument, we first reduce the proof to the unit ball by a scaling argument. Define
\[
\widetilde{w}(x):=\frac{w(x_0+Rx)}{R},\qquad \widetilde{w}_0(x):=\frac{w_0(x_0+Rx)}{R},
\]
together with
\[
\widetilde{w}_m(x):=\frac{w_m(x_0+Rx)}{R},\qquad\widetilde{v}_m(x):=\frac{v_m(x_0+Rx)}{R},
\]
and
\[
\widetilde{a}(x):=a(x_0+Rx) \text{ and } \widetilde{b}(x):=b(x_0+Rx)\qquad \text{ for } x\in B_1(0).
\]

The H\"older seminorm and the $L^\infty$ norm of the coefficient satisfy
\begin{align*}
[\widetilde{a}]_{0,\alpha;B_1}=R^\alpha[a]_{0,\alpha;B_R(x_0)},
\qquad
\|\widetilde{a}\|_{L^\infty(B_1)}=\|a\|_{L^\infty(B_R(x_0))},\\
[\widetilde{b}]_{0,\beta;B_1}=R^\beta[b]_{0,\beta;B_R(x_0)},
\qquad
\|\widetilde{b}\|_{L^\infty(B_1)}=\|b\|_{L^\infty(B_R(x_0))},
\end{align*}

while
\[\dashint_{B_1}G(|D\widetilde{w}|)\,dx=\dashint_{B_R(x_0)}G(|Dw|)\,dx.\]

Furthermore, setting
\[
\hat{\A}(x,z)=\A(x_0+Rx,z),\qquad\widetilde{\Psi}(x,z)=\widetilde{a}(x)G(|z|)+\widetilde{b}(x)H(|z|),
\]
we see that $\widetilde{w}$ solves the scaled Dirichlet problem
\[
-\operatorname{div}\hat{\A}(x,D\widetilde{w})=0
\quad\text{in }B_1(0),
\qquad\widetilde{w}\in\widetilde{w}_0+W^{1,\widetilde{\Psi}}_0(B_1(0)).
\]
Moreover, the structural assumptions remain unchanged under this scaling. Therefore, it is enough to establish \eqref{eq:25} in the case $R=1, x_0=0.$
The general estimate then follows by scaling back. In the next step, we derive uniform Nikolski estimates for the sequence $\{V_G(Dv_m)\}$.

\medskip
\noi\textbf{Step 3: Fractional Caccioppoli inequality and integrability}

We prove that, for every $\varrho\in(0,\gamma/2)$, there exists a constant $c>0$, depending only on $n$, $\kappa$, $s(G)$, $s(H)$, $\nu$, $L$ and $\varrho$, such that
\begin{align}\label{eq:26}
\|G(|Dv_m|)\|_{L^{\frac{n}{n-2\varrho}}(B_{1/2})}\le &c\bigl(\|a\|_{L^\infty(B_1)}+\|b\|_{L^\infty(B_1)}+[a]_{0,\alpha;B_1}+[b]_{0,\beta;B_1}+1\bigr)\nonum\\
&\qquad\times\|G(|Dv_m|)\|_{L^1(B_1)}\nonum\\
&+c\bigl(\|b\|_{L^\infty(B_1)}+[b]_{0,\beta;B_1}+\varepsilon_m\bigr)\|K(|Dv_m|)\|_{L^1(B_1)}+c.
\end{align}
To this end, we use the weak formulation of \eqref{eq:regularized-dirichlet} as follows
\begin{equation}\label{eq:weakvm}
\int_{B_1}\langle \A_m(x,Dv_m),D\varphi\rangle\,dx=0,\qquad \forall\,\varphi\in W^{1,K}_0(B_1).
\end{equation}
Choose the cut-off function $\eta\in C_0^\infty(B_{3/4})$ such that $\chi_{B_{2/3}}\le\eta\le\chi_{B_{3/4}}$ and $|D\eta|^2+|D^2\eta|\le10^4$. Fix $h\in\mathbb{R}^n$ with $|h|\le10^{-4}$ and set
\[
s(x,h):=|Dv_m(x+h)|+|Dv_m(x)|.
\]
Taking $\varphi=\tau_{-h}(\eta^2\tau_h(v_m))$ in \eqref{eq:weakvm} and using the integration-by-parts formula for finite differences, we obtain
\begin{align}\label{eq:fd}
\int_{B_1}\langle\tau_h\A_m(\cdot,Dv_m),D(\eta^2\tau_hv_m)\rangle\,dx=0.
\end{align}
Next, we write
\begin{align}\label{eq:decomp}
\tau_h\A_m(\cdot,Dv_m)=[\A_m(x+h,Dv_m(x+h))-\A_m(x+h,Dv_m(x))]\nonum\\+[\A_m(x+h,Dv_m(x))-\A_m(x,Dv_m(x))]=:J_1+J_2.
\end{align}
Substituting \eqref{eq:decomp} into \eqref{eq:fd}, we arrive at
\begin{align}
I_0&:=\int_{B_1}\eta^2\langle J_1,\tau_h(Dv_m)\rangle\,dx=-\int_{B_1}\eta^2\langle J_2,\tau_h(Dv_m)\rangle\,dx-2\int_{B_1}\eta\langle J_2,D\eta\rangle\tau_hv_m\,dx\nonumber\\
&\qquad\qquad\qquad\qquad\qquad\qquad-2\int_{B_1}\eta\langle J_1,D\eta\rangle\tau_hv_m\,dx=:I_1+I_2+I_3.\label{eq:27}
\end{align}
We estimate the terms $I_0$, $I_1$, $I_2$ and $I_3$ separately.

We begin with the left-hand side of \eqref{eq:27}. By \eqref{eq:Phi-mon} and \eqref{eq:A-monotonicity}, applied with $z_1=Dv_m(x+h)$ and $z_2=Dv_m(x)$ for $\Phi\equiv G,H,K$, we obtain
\begin{equation}\label{I0-lower}
\int_{B_1}\eta^2\Bigl[a(x+h)|\tau_h V_G(Dv_m)|^2+b(x+h)|\tau_h V_H(Dv_m)|^2+\varepsilon_m|\tau_h V_K(Dv_m)|^2\Bigr]dx\lesssim I_0.
\end{equation}
Next, using \eqref{growth_conditions}$_3$, $|a(x+h)-a(x)|\le[a]_{0,\alpha}|h|^\alpha$, $|b(x+h)-b(x)|\le[b]_{0,\beta}|h|^\beta$ and the monotonicity of $G',H'$, we have
\[
|J_2(x)|\;\lesssim\;[a]_{0,\alpha}|h|^\alpha\,G'(s(x,h))+[b]_{0,\beta}|h|^\beta\,H'(s(x,h)),
\]
so that $I_1=I_1^a+I_1^b$ with
\begin{align*}
|I_1^a|&\lesssim[a]_{0,\alpha}|h|^\alpha\int_{B_1}\eta^2G'(s(x,h))\,|\tau_h(Dv_m)|\,dx\quad \text{ and}\\
|I_1^b|&\lesssim[b]_{0,\beta}|h|^\beta\int_{B_1}\eta^2H'(s(x,h))\,|\tau_h(Dv_m)|\,dx.
\end{align*}

For $I_1^a$, Lemma~\ref{lem:Young epsilon} applied with $\Phi=G$ gives, for every $\varepsilon\in(0,1)$,
\[
G'(s(x,h))\,|\tau_h(Dv_m)|\;\le\;\varepsilon\,G(|\tau_h(Dv_m)|)+c_\varepsilon\,G(s(x,h)).
\]
Since, $G\in \N$, we can derive $$\int_{B_1}G(s(x,h))\,dx\le c\int_{B_1}G(|Dv_m|)\,dx.$$
Also, by Lemma~\ref{lem:basic-N} (with $\lambda=|h|\le1$) and Lemma~\ref{lem:finite-difference}, we have
\[
\int_{B_1}G(|\tau_h(Dv_m)|)\,dx\;\le\;|h|^{s(G)+1}\int_{B_1}G\!\left(\frac{|\tau_h(Dv_m)|}{|h|}\right)dx\;\le\;c\,|h|\int_{B_1}G(|Dv_m|)\,dx.
\]
Choosing $\varepsilon$ so that $\varepsilon|h|\le c_\varepsilon$, we conclude
\begin{equation}\label{I1a}
|I_1^a|\;\lesssim\;[a]_{0,\alpha}|h|^\alpha\int_{B_1}G(|Dv_m|)\,dx.
\end{equation}

For $I_1^b$, by the growth condition \eqref{condition_2}, Lemma~\ref{lem:Young epsilon} with $\Phi=K$ and proceeding as above, we get
\begin{equation}\label{I1b}
|I_1^b|\;\lesssim\;[b]_{0,\beta}|h|^\beta\int_{B_1}\bigl(G(|Dv_m|)+K(|Dv_m|)\bigr)\,dx.
\end{equation}
Combining \eqref{I1a} and \eqref{I1b}, we have
\begin{equation}\label{I1-final}
|I_1|\;\lesssim\;[a]_{0,\alpha}|h|^\alpha\int_{B_1}G(|Dv_m|)\,dx+[b]_{0,\beta}|h|^\beta\int_{B_1}\bigl(G(|Dv_m|)+K(|Dv_m|)\bigr)\,dx.
\end{equation}

\medskip
For $I_2.$
Using a splitting similar to the one above gives $I_2=I_2^a+I_2^b$. Using \eqref{growth_conditions}$_3$ and Lemma~\ref{lem:Young epsilon}, we get
\begin{align*}
|I_2^b|\lesssim[b]_{0,\beta}|h|^\beta\int_{B_1}\eta H'(|Dv_m|)\,|\tau_h(v_m)|\,dx\nonum\\
\lesssim[b]_{0,\beta}|h|^{\beta+1}\int_{B_1}\eta \left(H(|Dv_m|)
+H\!\left(\frac{|\tau_h(v_m)|}{|h|}\right)\right)\,dx.
\end{align*}
In the last display applying Lemma~\ref{lem:finite-difference} and then the growth condition \eqref{condition_2}, we have
\begin{align*}
|I_2^b|&\lesssim[b]_{0,\beta}|h|^{\beta+1}\int_{B_1}H(|Dv_m|)\,dx\nonum\\
&\lesssim[b]_{0,\beta}|h|^{\beta+1}\int_{B_1}\eta \left(H(|Dv_m|)
+K(|Dv_m|)\right)\,dx.
\end{align*}
Similarly, we get 
\begin{align*}
|I_2^a|\lesssim[a]_{0,\alpha}|h|^{\alpha+1}\int_{B_1}G(|Dv_m|)\,dx.
\end{align*}
Hence,
\begin{equation}\label{I2-final}
|I_2|\;\lesssim\;[a]_{0,\alpha}|h|^{\alpha+1}\int_{B_1}G(|Dv_m|)\,dx+[b]_{0,\beta}|h|^{\beta+1}\int_{B_1}\bigl(G(|Dv_m|)+K(|Dv_m|)\bigr)\,dx.
\end{equation}

\medskip
Finally, we estimate $I_3$, from \eqref{eq:A-difference}, we can get
\begin{align*}
|I_3|
&\le c\int_{B_1}\eta \Bigl(a(x+h)G'(s(x,h))+b(x+h)H'(s(x,h))\nonumber\\
&\qquad\qquad\qquad+\varepsilon_mK'(s(x,h))\Bigr)\frac{|\tau_hDv_m|}{s(x,h)}|\tau_hv_m|\,dx\nonumber\\
&\le c|h|\int_{B_1}\eta\frac{|\tau_hv_m|}{|h|}\Bigl(a(x+h)G'(s(x,h))+b(x+h)H'(s(x,h))\nonumber\\
&\qquad\qquad\qquad+\varepsilon_mK'(s(x,h))\Bigr)\,dx.
\end{align*}
Now, applying a similar approach as in \cite[page~32]{Baasandorj2020}, we obtain
\begin{equation}\label{I3-final}
|I_3|\;\lesssim\;|h|\bigl(\|a\|_{L^\infty}+\|b\|_{L^\infty}\bigr)\int_{B_1}G(|Dv_m|)\,dx+|h|\bigl(\|b\|_{L^\infty}+\varepsilon_m\bigr)\int_{B_1}K(|Dv_m|)\,dx.
\end{equation}

\medskip
Combining \eqref{I0-lower}, \eqref{I1-final}, \eqref{I2-final} and \eqref{I3-final} in \eqref{eq:27}, we are now in a position to derive the following desired fractional Caccioppoli type inequality
\begin{align}
&\int_{B_1}\eta^2\Bigl[a(x+h)|\tau_hV_G(Dv_m)|^2+b(x+h)|\tau_hV_H(Dv_m)|^2+\varepsilon_m|\tau_hV_K(Dv_m)|^2\Bigr]dx\nonum\\
&\,\,\lesssim\Bigl([a]_{0,\alpha}|h|^\alpha+[a]_{0,\alpha}|h|^{\alpha+1}+[b]_{0,\beta}|h|^\beta+[b]_{0,\beta}|h|^{\beta+1}\nonum\\
&\qquad \ +|h|\bigl(\|a\|_{L^\infty}+\|b\|_{L^\infty}\bigr)\Bigr)\int_{B_1}G(|Dv_m|)\,dx\nonum\\
&\qquad+\Bigl([b]_{0,\beta}|h|^\beta+[b]_{0,\beta}|h|^{\beta+1}+|h|\bigl(\|b\|_{L^\infty}+\varepsilon_m\bigr)\Bigr)\int_{B_1}K(|Dv_m|)\,dx.
\end{align}

Since $\gamma=\min\{\alpha,\beta\}$ gives $|h|^\alpha,|h|^\beta\le|h|^\gamma$ for $|h|\le1$, setting
\[
T:=\|a\|_{L^\infty(B_1)}+\|b\|_{L^\infty(B_1)}+[a]_{0,\alpha;B_1}+[b]_{0,\beta;B_1},
\qquad
T_m:=\|b\|_{L^\infty(B_1)}+[b]_{0,\beta;B_1}+\varepsilon_m,
\]
we obtain that
\begin{equation}
    \int_{B_1} \eta^2 \Big[ a(x+h)|\tau_h V_G(Dv_m)|^2 + b(x+h)|\tau_h V_H(Dv_m)|^2 \Big] dx \le \mathscr{C}_m |h|^\gamma,
\end{equation}
valid for every $|h|\le10^{-4}$, with $c\equiv c(n,\kappa,s(G),s(H),\nu,L)$. Here, for simplicity, we used the notation
\[
\mathscr{C}_m:= c \Big( T \int_{B_1} G(|Dv_m|) dx + T_m \int_{B_1} K(|Dv_m|) dx \Big).
\]
Dropping the non-negative terms yields
\begin{align}
\int_{B_1} \eta^2 a(x+h)|\tau_h V_G(Dv_m)|^2 dx &\le \mathscr{C}_m|h|^\gamma, \label{VG-est} \\
\int_{B_1} \eta^2 b(x+h)|\tau_h V_H(Dv_m)|^2 dx &\le \mathscr{C}_m|h|^\gamma. \label{VH-est}
\end{align}

Next, we aim to apply Lemma~\ref{lem:nikolski} to obtain \eqref{eq:26}. Using the H\"{o}lder continuity of $a(x)$ and $b(x)$, we define the radii
\[
r_a := \min\left\{\frac{1}{6},\left(\frac{\mu}{8[a]_{0,\alpha}}\right)^{1/\alpha}\right\}\quad \text{ and } \quad r_b := \min\left\{\frac{1}{6},\left(\frac{\mu}{8[b]_{0,\beta}}\right)^{1/\beta}\right\}.
\]
These radii depend only on the parameters $\mu, [a]_{0,\alpha}, [b]_{0,\beta}$ and are independent of the choice of the point $x_0$. 

Let $x_0 \in B_{1/2}$. If $a(x_0) \geq \mu/2$, then for any $x \in B_{r_a/2}(x_0)$ and $|h| < r_a/2$, then $x+h \in B_{r_a}(x_0)$. The H\"{o}lder continuity then yields
\begin{equation}\label{lower-bound-a}
a(x+h) \geq a(x_0)-[a]_{0,\alpha}|x+h-x_0|^\alpha \geq \frac{\mu}{2}-[a]_{0,\alpha}r_a^\alpha \geq \frac{\mu}{2}-\frac{\mu}{8} = \frac{3\mu}{8}.
\end{equation}
Analogously, if $b(x_0) \geq \mu/2$, it follows that $b(x+h) \geq 3\mu/8$ for all $x \in B_{r_b/2}(x_0)$ whenever $|h| < r_b/2$. Since $x_0 \in B_{1/2}$ and $r_a, r_b \leq 1/6$, the local balls $B_{r_a/2}(x_0)$ and $B_{r_b/2}(x_0)$ are strictly contained within $B_{2/3}$, ensuring the global cut-off $\eta \equiv 1$ throughout the domain of integration.

For any fixed $x_0 \in B_{1/2}$, the condition $a(x_0)+b(x_0) \geq \mu$ implies that at least one of the coefficients satisfies the lower bound $\mu/2$. Thus, we proceed by considering two sub-cases. 

\textit{Case} $(A)$: $b(x_0) \ge \mu/2$. Using the lower bound \eqref{lower-bound-a} for $b(x+h)$ and the fact that $\eta \equiv 1$ on $B_{r_b/2}(x_0)$. From \eqref{VH-est} for all $|h| < r_b/2$, we obtain 
\[
\int_{B_{r_b/2}(x_0)} |\tau_h V_H(Dv_m)|^2 dx \le \frac{8}{3\mu} \int_{B_1} \eta^2 b(x+h)|\tau_h V_H|^2 dx \le \frac{8\mathscr{C}_m}{3\mu}|h|^\gamma =: M_A^2 |h|^\gamma.
\]
To apply Lemma~\ref{lem:nikolski}, we set $\rho = r_b/4$, $R = r_b/2$ and $d = \gamma/2$. We choose a local cut-off $\eta_0 \in C_0^\infty(B_{3r_b/8}(x_0))$ such that $\chi_{B_{r_b/4}(x_0)} \le \eta_0 \le \chi_{B_{3r_b/8}(x_0)}$. Since $\eta_0 \le 1$, we have
\[
\int_{B_{r_b/2}(x_0)} \eta_0^2 |\tau_h V_H|^2 dx \le M_A^2 |h|^{2d} \qquad \text{for } |h| \le \frac{r_b}{16}.
\]
Applying Lemma~\ref{lem:nikolski} yields, for any $\varrho < \gamma/2$
\begin{equation}\label{eq:38}
 \|V_H(Dv_m)\|_{L^{2n/(n-2\varrho)}(B_{r_b/4}(x_0))} \le c\,r_b^{-(2\varrho+\gamma+2)} \Big( M_A + \|V_H(Dv_m)\|_{L^2(B_{r_b/2}(x_0))} \Big).   
\end{equation}
To bound the $L^2$ norm, we evaluate the bound $b(x) \ge 3\mu/8$ at $h=0$. This implies $$H(|Dv_m|) \le \frac{8}{3\mu}b(x)H(|Dv_m|) \le \frac{8}{3\mu}\Psi(x,Dv_m).$$ The uniform energy bound then gives
\[
\|V_H(Dv_m)\|^2_{L^2(B_{r_b/2}(x_0))} \approx \int_{B_{r_b/2}(x_0)} H(|Dv_m|) dx \le \frac{8}{3\mu} \int_{B_1} \Psi(x,Dv_m) dx \le \frac{8L_1}{3\mu}.
\]
Substituting this into the estimate \eqref{eq:38}, squaring both sides and using the equivalence $|V_H(z)|^2 \approx H(|z|)$, we find
\[
\int_{B_{r_b/4}(x_0)} H(|Dv_m|)^{n/(n-2\varrho)} dx \le c\,\mathcal{C}_A^{n/(n-2\varrho)},
\] where $\mathcal{C}_A := c\,r_b^{-(2\varrho+\gamma+2)}(\mathscr{C}_m+L_1+1).$
Finally, we use the inequality \eqref{eq:equivalence}, that is, $G \le C_0(H+1)$. Taking the power $p = n/(n-2\varrho) > 1$ gives
\begin{equation}\label{CaseA-final}
\int_{B_{r_b/4}(x_0)} G(|Dv_m|)^{n/(n-2\varrho)} dx \le c\,\mathcal{C}_A^{n/(n-2\varrho)} + c|B_{r_b/4}(x_0)| \le \hat{C}_A^{n/(n-2\varrho)},
\end{equation}
where $\hat{C}_A \le c(\mathscr{C}_m + L_1 + 1)$.

\textit{Case} $(B)$: $a(x_0) \ge \mu/2$. Following a parallel argument for \eqref{VG-est}, the bound $a(x+h) \ge 3\mu/8$ provides
\[
\int_{B_{r_a/2}(x_0)} |\tau_h V_G(Dv_m)|^2 dx \le \frac{8}{3\mu} \int_{B_1} \eta^2 a(x+h)|\tau_h V_G|^2 dx \le \frac{8\mathscr{C}_m}{3\mu}|h|^\gamma =: M_B^2 |h|^\gamma.
\]
Using Lemma~\ref{lem:nikolski} exactly as before with a cutoff supported in $B_{3r_a/8}(x_0)$, we obtain
\[
\|V_G(Dv_m)\|_{L^{2n/(n-2\varrho)}(B_{r_a/4}(x_0))} \le c\,r_a^{-(2\varrho+\gamma+2)} \Big( M_B + \|V_G(Dv_m)\|_{L^2(B_{r_a/2}(x_0))} \Big).
\]
By evaluating $a(x) \ge 3\mu/8$ at $h=0$, we bound $G(|Dv_m|) \le \frac{8}{3\mu}\Psi(x,Dv_m)$, bounding the local $L^2$ norm of $V_G$ uniformly by $c(L_1)$. As $|V_G(z)|^2 \approx G(|z|)$, yielding
\begin{equation}\label{CaseB-final}
\int_{B_{r_a/4}(x_0)} G(|Dv_m|)^{n/(n-2\varrho)} dx \le \hat{C}_B^{n/(n-2\varrho)},
\end{equation}
where $\hat{C}_B \le c(\mathscr{C}_m+L_1+1).$

Let $r_0 := \min\{r_a, r_b\}/4$. We can cover the compact set $\overline{B_{1/2}}$ with $N \le c(n)r_0^{-n}$ balls $\{B_{r_0/2}(x_j)\}_{j=1}^N$ centered at $x_j \in B_{1/2}$. Since $r_0/2 \le \min\{r_a, r_b\}/8$, each ball $B_{r_0/2}(x_j)$ is contained within both $B_{r_a/4}(x_j)$ and $B_{r_b/4}(x_j)$. Because either \textit{Case} $(A)$ or \textit{Case} $(B)$ must apply at each $x_j$, inequalities \eqref{CaseA-final} and \eqref{CaseB-final} guarantee
\[
\int_{B_{r_0/2}(x_j)} G(|Dv_m|)^{n/(n-2\varrho)} dx \le \tilde C^{n/(n-2\varrho)},
\] where $\tilde C := \max(\hat{C}_A, \hat{C}_B) \le c(\mathscr{C}_m+L_1+1).$ Summing over the finite subcover yields
\[
\int_{B_{1/2}} G(|Dv_m|)^{n/(n-2\varrho)} dx \le \sum_{j=1}^N \int_{B_{r_0/2}(x_j)} G(|Dv_m|)^{n/(n-2\varrho)} dx \le c\,r_0^{-n}\tilde C^{n/(n-2\varrho)}.
\]
Taking the $(n-2\varrho)/n$ power of both sides and noting that $r_0$ is independent of $m$, we obtain
\[
\|G(|Dv_m|)\|_{L^{n/(n-2\varrho)}(B_{1/2})} \le c(\mathscr{C}_m + L_1 + 1).
\]
Substituting the definition of $\mathscr{C}_m$ gives
\[
\|G(|Dv_m|)\|_{L^{n/(n-2\varrho)}(B_{1/2})} \le c T\|G(|Dv_m|)\|_{L^1(B_1)} + c T_m\|K(|Dv_m|)\|_{L^1(B_1)} + c.
\]
This proves \eqref{eq:26}.

The remainder of the proof follows exactly as in Step-$4$ and Step-$5$ of the proof of \cite[Theorem 5.1]{Baasandorj2020}. We therefore omit the details.
\end{proof}

\section{Higher integrability estimates}\label{sec_6}
\noi The next result provides a local higher integrability estimate for the energy density. This lemma will be used to obtain a key estimate in Step~$6$ of the proof of Theorem~\ref{main_thm}.
\begin{theorem}\label{thm:6.1}
Let $u\in W^{1,\Psi}(\Omega)$ be a distributional solution of \eqref{model_problem_2} satisfying assumptions \eqref{growth_conditions}, \eqref{growth_condition_2}, \eqref{condition_1} and \eqref{condition_2}. Suppose, in addition, that $\Psi(\cdot,F)\in L^\Theta_{\mathrm{loc}}(\Omega)$ for some $\Theta\in\N$ with $s(\Theta)\ge1$. Then there exists a constant $\delta=\delta(\textnormal{\texttt{data}})>0$, satisfying $\delta<1/s(\Theta)$, such that
\[
\Psi_1(\cdot,Du)\in L^{1+\delta}_{\mathrm{loc}}(\Omega).
\]
Furthermore, there exists a constant $c=c(\textnormal{\texttt{data}})>0$ for which
\begin{equation}\label{eq:36}
\left(\dashint_{B_\rho}[\Psi_1(x,Du)]^{1+\delta}\,dx\right)^{\frac{1}{1+\delta}}
\le c\dashint_{B_{2\rho}}\Psi_1(x,Du)\,dx
+c\left(\dashint_{B_{2\rho}}[\Psi(x,F)]^{1+\delta}\,dx\right)^{\frac{1}{1+\delta}}
\end{equation}
holds for every ball $B_{2\rho}\subset\Omega$ with $2\rho\le1$. In the homogeneous case $F\equiv0$, for every open set $\Omega_0\Subset\Omega$ there exists a constant $c=c(\textnormal{\texttt{data}},\operatorname{dist}(\Omega_0,\partial\Omega))>0$ such that
\begin{equation}
\|\Psi_1(\cdot,Du)\|_{L^{1+\delta}(\Omega_0)}\le c.
\end{equation}
\end{theorem}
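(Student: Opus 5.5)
The plan is a Caccioppoli inequality plus the Sobolev--Poincar\'e inequality of Theorem~\ref{Sobolev-Poincare}, closed by Gehring's lemma, in the same spirit as Lemma~\ref{lem:higher_int_local}.

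\textbf{Step 1: admissibility of the test function.} Fix $B_{2\rho}\subset\Omega$ with $2\rho\le1$. Take a cut-off $\eta\in C^\infty_0(B_{2\rho})$ with $\chi_{B_\rho}\le\eta\le\chi_{B_{2\rho}}$ and $|D\eta|\le4/\rho$, and fix $s\ge\max\{s(G),s(H)\}+1$. The test function will be $\varphi=\eta^s(u-(u)_{B_{2\rho}})$. Since $u$ is only a distributional solution with $\Psi(\cdot,Du)\in L^1$, I first justify this choice. By Theorem~\ref{Sobolev-Poincare} and Lemma~\ref{lem:modular_equivalence}, $\Psi(\cdot,D\varphi)\in L^1(B_{2\rho})$. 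Lemma~\ref{lem:existence} is stated for $\operatorname{div}T(x,S)=0$. I would apply its proof verbatim to the combined field $\A(x,Du)-\B(x,F)$, using the growth bounds \eqref{growth_conditions}$_1$ and \eqref{growth_condition_2}. The pairing bound $|\langle\B(x,F),D\varphi_k\rangle|\lesssim\Psi(x,F)+\Psi(x,D\varphi_k)$ follows from Lemma~\ref{lem:Young epsilon}, so the approximation goes through and the weak formulation holds for $\varphi$.

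\textbf{Step 2: Caccioppoli inequality.} Insert $\varphi$ into the weak formulation. Ellipticity \eqref{growth_conditions}$_2$ gives $\langle\A(x,z),z\rangle\gtrsim\Psi(x,z)$. The remaining terms involve $a G'(|Du|)+bH'(|Du|)$ and $aG'(|F|)+bH'(|F|)$, multiplied by either $|u-(u)_{B_{2\rho}}|/\rho$ or $|Du|$. Lemma~\ref{lem:Young epsilon}, applied separately to $G$ and $H$ with weights $a(x)$ and $b(x)$ as in Lemma~\ref{lem:higher_int_local}, absorbs the $\Psi(x,Du)$ contributions. This yields
\begin{equation*}
\dashint_{B_\rho}\Psi(x,Du)\,dx\le c\dashint_{B_{2\rho}}\Psi\Bigl(x,\frac{u-(u)_{B_{2\rho}}}{\rho}\Bigr)dx+c\dashint_{B_{2\rho}}\Psi(x,F)\,dx .
\end{equation*}

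\textbf{Step 3: reverse H\"older inequality.} Theorem~\ref{Sobolev-Poincare} bounds the first term on the right by
\begin{equation*}
c\Bigl[1+\Bigl(\int_{B_{2\rho}}G(|Du|)\,dx\Bigr)^{\gamma/n}\Bigr]\Bigl(\dashint_{B_{2\rho}}[\Psi_1(x,Du)]^{\theta}dx\Bigr)^{1/\theta}.
\end{equation*}
The bracket is controlled by $c(\|\Psi_1(\cdot,Du)\|_{L^1(\Omega)})$ via Lemma~\ref{lem:modular_equivalence}, and this is precisely where the dependence on \texttt{data} enters. Adding $1$ to both sides gives the reverse H\"older inequality
\begin{equation*}
\dashint_{B_\rho}\Psi_1(x,Du)\,dx\le c\Bigl(\dashint_{B_{2\rho}}[\Psi_1(x,Du)]^\theta dx\Bigr)^{1/\theta}+c\dashint_{B_{2\rho}}\Psi(x,F)\,dx,
\end{equation*}
valid for all such balls with a uniform $\theta\in(0,1)$.

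\textbf{Step 4: Gehring's lemma and conclusion.} Apply Gehring's lemma with the right-hand side datum \cite[Theorem 6.6]{2003_Giusti_Direct_methods_in_the_calculus_of_variations}. This requires $\Psi(\cdot,F)\in L^{1+\delta_0}_{\loc}$ for some $\delta_0>0$. That holds because $\Theta\in\N$ satisfies $\Theta(t)\gtrsim t^{1+1/s(\Theta)}$ for $t\ge1$, which follows from the index bound in Definition~\ref{def:Young}. Shrinking $\delta$ below $1/s(\Theta)$ and below the Gehring exponent then gives \eqref{eq:36}.

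For the case $F\equiv0$, cover $\Omega_0$ by finitely many balls of radius comparable to $\min\{1,\dist(\Omega_0,\partial\Omega)\}/4$. On each ball, \eqref{eq:36} together with $\|\Psi_1(\cdot,Du)\|_{L^1(\Omega)}$ bounds the $L^{1+\delta}$ norm; summing over the cover gives the global bound.

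\textbf{Main obstacle.} The delicate point is Step 3. The Sobolev--Poincar\'e inequality carries the nonhomogeneous factor $(\int G(|Du|))^{\gamma/n}$, which reflects the borderline case of \eqref{condition_2}. It must be bounded globally by the energy rather than locally, so the constants, and hence $\delta$, depend on $\|\Psi_1(\cdot,Du)\|_{L^1(\Omega)}$. One must also check that the case split in Theorem~\ref{Sobolev-Poincare} requires $2\rho\le1$, which the hypothesis provides.
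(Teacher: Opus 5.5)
Your proposal is correct and follows the paper's proof essentially step for step. You test with $\eta^s(u-(u)_{B_{2\rho}})$, made admissible via Lemma~\ref{lem:existence}, derive the Caccioppoli inequality, and turn it into a reverse H\"older inequality through Theorem~\ref{Sobolev-Poincare}, with the global energy controlling the factor $(\int G(|Du|)\,dx)^{\gamma/n}$. You then close with Gehring's lemma after checking $\Psi(\cdot,F)\in L^{1+1/s(\Theta)}_{\mathrm{loc}}$. Your explicit adaptation of Lemma~\ref{lem:existence} to the combined field $\A(x,Du)-\B(x,F)$, and your covering argument for the case $F\equiv0$, supply details the paper leaves implicit.
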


\begin{proof}
Let $B_{2\rho}\Subset\Omega$ and choose a cut-off function $\eta\in C_0^\infty(B_{2\rho})$ satisfying $\chi_{B_\rho}\le\eta\le\chi_{B_{2\rho}}$ and $|D\eta|\le\frac{4}{\rho}.$
For $\varphi=\eta^s\bigl(u-(u)_{B_{2\rho}}\bigr), s\ge \max\{s(G),s(H)\}+1,$
Lemma~\ref{lem:existence} ensures that $\varphi$ is an admissible test function for the weak formulation of \eqref{model_problem_2}. Proceeding as in the proof of Lemma~\ref{lem:higher_int_local} and using \eqref{growth_conditions}, \eqref{growth_condition_2}, together with Young's inequality, we obtain
\begin{equation}
\int_{B_{2\rho}}\Psi(x,Du)\eta^s\,dx
\le
c\int_{B_{2\rho}}\Psi\left(x,\frac{u-(u)_{B_{2\rho}}}{\rho}\right)\,dx
+c\int_{B_{2\rho}}\Psi(x,F)\,dx,
\end{equation}
where $c=c(n,s(G),s(H),\nu,L)$. Adding $1$ both sides, we get
\begin{equation}
\int_{B_{2\rho}}\Psi_1(x,Du)\eta^s\,dx
\le
c\int_{B_{2\rho}}\Psi_1\left(x,\frac{u-(u)_{B_{2\rho}}}{\rho}\right)\,dx
+c\int_{B_{2\rho}}\Psi(x,F)\,dx,
\end{equation}

Applying Theorem~\ref{Sobolev-Poincare} to the first term on the right-hand side yields
\begin{equation}
\dashint_{B_\rho}\Psi_1(x,Du)\,dx
\le
c\left(\dashint_{B_{2\rho}}[\Psi_1(x,Du)]^d\,dx\right)^{\frac1d}
+c\dashint_{B_{2\rho}}\Psi(x,F)\,dx,
\label{eq:37}
\end{equation}
where $c=c(\texttt{data})$ and $d=d(n,s(G),s(H),\gamma)\in(0,1)$.

In order to apply a variant of Gehring's lemma, we start by proving that $\Psi(x,F)\in L^{1+\frac1{s(\Theta)}}(\Omega_0)$. Thus, for every $\Omega_0\Subset\Omega$, we estimate
\begin{align*}
\int_{\Omega_0}[\Psi(x,F)]^{1+\frac1{s(\Theta)}}\,dx
&=
\int_{\Omega_0\cap\{\Psi(x,F)\le1\}}[\Psi(x,F)]^{1+\frac1{s(\Theta)}}\,dx\\
&\qquad\qquad+\int_{\Omega_0\cap\{\Psi(x,F)>1\}}[\Psi(x,F)]^{1+\frac1{s(\Theta)}}\,dx\\
&\le
|\Omega_0|
+c\int_{\Omega_0}\Theta(\Psi(x,F))\,dx
<\infty,
\end{align*}
where the last inequality follows from Lemma~\ref{lem:basic-N} applied to the Young function $\Theta$.

Finally, a suitable version of Gehring's lemma applied to \eqref{eq:37} yields the existence of $\delta=\delta(\texttt{data})>0$, with $\delta<\frac1{s(\Theta)},$
such that \eqref{eq:36} holds.
\end{proof}
\begin{remark}
    Let $u$ be the original solution to \eqref{model_problem_2}. By Theorem \ref{thm:6.1} and a standard covering argument, for every choice of open subset $\Omega_0\Subset\Omega_1\Subset\Omega$ there exists a constant $c\equiv c(\textnormal{\texttt{data}}_0)$ and $0<\delta<\frac{1}{s(\Theta)}$ which depends on $\textnormal{\texttt{data}}$ such that
$$\|\Psi_1(\cdot,Du)\|_{L^{1+\delta}(\Omega_0)}\le c(\textnormal{\texttt{data}}_0);$$
in particular, since $\Psi_1:=\Psi+1$, we get
\begin{equation}\label{eq:energy_bound}
    \|\Psi(\cdot,Du)\|_{L^{1+\delta}(\Omega_0)}\le c(\textnormal{\texttt{data}}_0),
\end{equation}
where $\textnormal{\texttt{data}}_0\equiv(\textnormal{\texttt{data}}, \dist(\Omega_1,\partial\Omega),\|\Psi_1(\cdot,F)\|_{L^\Theta(\Omega_1)}).$
\end{remark}
\section{Proof of Theorem \ref{main_thm}: exit time arguments and conclusion}\label{sec_7}

\noi The proof of Theorem~\ref{main_thm} proceeds in ten steps. For the sake of consistency, we adopt the same structural outline presented in the proof of \cite[Theorem~1.1]{Colombo2016} and \cite[Theorem~2.1]{Baasandorj2020}.

\medskip
\noi\textbf{Step 1: Exit time and covering of the level set.} Fix a ball $B_R\equiv B_R(x_0)$ as in the statement of Theorem \ref{main_thm},
with $R\le r$ and $B_R\Subset\Omega_0$, where \(r\) is the radius appearing in the statement of the Theorem~\ref{main_thm}. The choice of \(r\) will be specified at the end of the proof and depends only on the constants introduced during the proof. We choose two auxiliary radii $r_1$ and $r_2$ such that
$\tfrac{R}{2}\le r_1<r_2\le R$. For $\lambda>1$ and $\tfrac{R}{2}\le s\le R,$
we introduce the level sets
\begin{equation*}
    E^s_\lambda:=\bigl\{x\in B_s(x_0):\Psi_1(x,Du)>\lambda\bigr\}.
\end{equation*}
Let $M\ge1$ be a constant depending only on \texttt{data}, to be fixed at the very end
of the proof. For every ball $B_\rho(y)\subset B_R$ define the average
\begin{equation*}
    \mathfrak{F}\bigl(B_\rho(y)\bigr)
    :=\dashint_{B_\rho(y)}\bigl[\Psi_1(x,Du)+M\Psi_1(x,F)\bigr]\,dx.
\end{equation*}
By the Lebesgue differentiation theorem, for a.e.\ $y_0\in E^{s}_\lambda$
one has
\begin{equation}\label{step1-lebesgue}
    \lim_{\rho\to0^+}\mathfrak{F}\bigl(B_\rho(y_0)\bigr)>\lambda,
    \qquad \tfrac{R}{2}\le s\le R.
\end{equation}
On the other hand, if $y_0\in B_{r_1}(x_0)$ and
$\rho\in\left[\tfrac{r_2-r_1}{80},\,r_2-r_1\right]$, then
$B_\rho(y_0)\subset B_{r_2}(x_0)$ and we get
\begin{equation}\label{step1-lambda0}
    \mathfrak{F}\bigl(B_\rho(y_0)\bigr)
    \le \left(\frac{80\,r_2}{r_2-r_1}\right)^{\!n}
    \dashint_{B_{r_2}(x_0)}\bigl[\Psi_1(x,Du)+M\Psi_1(x,F)\bigr]\,dx
    =:\lambda_0.
\end{equation}
We can observe that $\lambda_0>1,$ since $\frac{r_2}{r_2-r_1}>1.$ From now on we only consider values $\lambda>\lambda_0$.

Combining \eqref{step1-lebesgue} and \eqref{step1-lambda0} the intermediate value
theorem yields an \emph{exit time} radius
$\rho_{y_0}\in\bigl(0,\tfrac{r_2-r_1}{80}\bigr)$ such that
\begin{equation}\label{step1-stopping}
    \mathfrak{F}\bigl(B_{\rho_{y_0}}(y_0)\bigr)=\lambda,
    \qquad
    \mathfrak{F}\bigl(B_\rho(y_0)\bigr)<\lambda
    \quad\text{for every }\rho\in(\rho_{y_0},\,r_2-r_1].
\end{equation}

Since \eqref{step1-stopping} holds for a.e.\ $y_0\in E^{r_1}_\lambda$,
the family of balls $\{B_{\rho_{y_0}}(y_0)\}$ covers $E^{r_1}_\lambda$ up to a null set. Vitali's covering theorem then provides a countable,
pairwise disjoint subfamily
$\{\widetilde{B}_i\}_i:=\{B_{\rho_{y_i}}(y_i)\}_i$ such that
\begin{equation*}
    E^{r_1}_\lambda\subset\bigcup_i 5\widetilde{B}_i,
    \qquad \{\widetilde{B}_i\}_i \text{ is mutually disjoint},
\end{equation*}
and 
\begin{equation}
    \mathfrak{F}\bigl(B_{\rho_{y_i}}(y_i)\bigr)=\lambda,
    \qquad
    \mathfrak{F}\bigl(B_\rho(y_i)\bigr)<\lambda
    \quad\text{for every }\rho\in(\rho_{y_i},\,r_2-r_1].
\end{equation}
Moving forward by setting 
\begin{equation}\label{eq:32}
    B_i\equiv 5B_{\rho_{y_i}}(y_i) \text{ and } \rho_i:=5\rho_{y_i},
\end{equation} 
one checks
directly from the construction that
\begin{equation}\label{step1-geometry}
    80\widetilde{B}_i=16B_i\subset B_{r_2}(x_0),
    \qquad
    \rho_i\le\frac{r_2-r_1}{16}\le r\le1,
\end{equation}
and recalling \eqref{step1-stopping}, we write
\begin{equation}\label{step1-final}
    \begin{cases}
        \displaystyle \mathfrak{F}(\widetilde{B}_i) = \dashint_{\widetilde{B}_i} [\Psi_1(x, Du) + M\Psi_1(x, F)] \, dx = \lambda, \\[12pt]
        \displaystyle \mathfrak{F}(16B_i) = \dashint_{16B_i} [\Psi_1(x, Du) + M\Psi_1(x, F)] \, dx \le \lambda.
    \end{cases}
\end{equation}
The family $\{B_i\}$, together with the identities
\eqref{step1-geometry}--\eqref{step1-final}, is what we use in the
subsequent comparison steps.

\medskip
\noi\textbf{Step 2: A first comparison estimate.} For each ball $B_i$ constructed in Step~1, we have $16B_i\Subset\Omega_0$. We consider the Dirichlet problem
\begin{equation}\label{step1_prob}
\begin{cases}
-\operatorname{div}\A(x,Du_i)=0 & \text{in }16B_i,\\
u_i\in u+W^{1,\Psi}_0(16B_i).
\end{cases}
\end{equation}
By Theorem~\ref{thm:fractional_diff}, there exists a unique solution $u_i$ satisfying
\begin{equation*}
u_i\in W^{1,G^{1+\frac{\gamma}{n}}}_{\mathrm{loc}}(16B_i),
\qquad
G(|Du_i|)\in L^{\frac{n}{n-2\varrho}}_{\mathrm{loc}}(16B_i)
\quad\text{for every }\varrho<\frac{\gamma}{2},
\end{equation*}
together with the estimates
\begin{equation}\label{eq:fist_comp}
\begin{aligned}
\int_{16B_i}\Psi(x,Du_i)\,dx
&\le
c_1\int_{16B_i}\Psi(x,Du)\,dx,\\
\int_{16B_i}[\Psi(x,Du_i)]^{1+\sigma}\,dx
&\le
c_2\int_{16B_i}[\Psi(x,Du)]^{1+\sigma}\,dx+c_2,
\end{aligned}
\end{equation}
where the constants $c_1\equiv  c_1(n,\nu,L,s(G),s(H))
\text{ and } c_2\equiv c_2(\texttt{data})$ and $\sigma\equiv\sigma(\texttt{data})$. We claim that for every $\varepsilon\in(0,1)$ there exists a constant $c_\varepsilon\equiv c_\ve(n,s(G),s(H),\nu,L,\varepsilon)$
such that
\begin{align}\label{eq:7.30}
\dashint_{16B_i}\Bigl(a(x)|V_G(Du_i)-V_G(Du)|^2+b(x)|V_H(Du_i)-V_H(Du)|^2\Bigr)\,dx\nonum\\
\qquad\le
\varepsilon\dashint_{16B_i}\Psi_1(x,Du)\,dx
+c_\varepsilon\dashint_{16B_i}\Psi_1(x,F)\,dx.
\end{align}
To verify this estimate, we choose $\varphi=u-u_i$
as a test function in the weak formulation of the equation satisfied by $u$, namely
\begin{equation*}
\int_{16B_i}\langle \A(x,Du)-\A(x,Du_i),D\varphi\rangle\,dx=\int_{16B_i}\langle \B(x,F),D\varphi\rangle\,dx.
\end{equation*}
The admissibility of this choice follows from \eqref{eq:fist_comp}, since $u-u_i\in W^{1,\Psi}_0(16B_i)$. Using the monotonicity and growth assumptions on $\A$, together with \eqref{eq:fist_comp}, we estimate
\begin{align}
&\dashint_{16B_i}\Bigl(a(x)|V_G(Du_i)-V_G(Du)|^2+b(x)|V_H(Du_i)-V_H(Du)|^2\Bigr)\,dx \nonumber\\
&\le c\dashint_{16B_i}\Bigl(a(x)G'(|F|)+b(x)H'(|F|)\Bigr)(|Du_i|+|Du|)\,dx \nonumber\\
&\le \varepsilon\dashint_{16B_i}\bigl[\Psi(x,Du_i)+\Psi(x,Du)\bigr]\,dx
+c_\varepsilon\dashint_{16B_i}\Psi(x,F)\,dx \nonumber\\
&\le 2\varepsilon\dashint_{16B_i}\Psi_1(x,Du)\,dx
+c_\varepsilon\dashint_{16B_i}\Psi_1(x,F)\,dx,
\label{eq:first_est}
\end{align}
where $c=c(n,s(G),s(H),\nu,L)$ and
$c_\varepsilon=c_\varepsilon(n,s(G),s(H),\nu,L,\varepsilon)$. In the second inequality, we have used Young's inequality, while the last one follows from \eqref{eq:fist_comp}. After replacing $\varepsilon$ by $\varepsilon/2$, estimate \eqref{eq:7.30} follows.

\medskip
\noi\textbf{Step 3: A second comparison estimate.} Let $x_{i,a}^m\in \overline{8B_i}$ be such that
\[a(x_{i,a}^m)=\inf_{x\in \overline{8B_i}}a(x)\text{ and define }
\Psi_a(x,z):=a(x_{i,a}^m)G(|z|)+b(x)H(|z|).
\]
Again applying Theorem~\ref{thm:fractional_diff}, there exists a unique function $w_i\in u_i+W^{1,\Psi_a}_0(8B_i)$
satisfying
\begin{equation}\label{eq:step3-problem}
\begin{cases}
-\operatorname{div}\A_a(x,Dw_i)=0 & \text{in }8B_i,\\
w_i\in u_i+W^{1,\Psi_a}_0(8B_i),
\end{cases}
\end{equation}
where $\A_a(x,z)=\tilde\A(a(x_{i,a}^m),b(x),z)$. Moreover,
\[
w_i\in W^{1,G^{1+\frac{\gamma}{n}}}_{\mathrm{loc}}(8B_i),\qquad
G(|Dw_i|)\in L^{\frac{n}{n-2\varrho}}_{\mathrm{loc}}(8B_i)
\quad\text{for every }\varrho<\frac{\gamma}{2},
\]
and
\begin{equation}\label{eq:second_comp}
   \dashint_{8B_i}\Psi_a(x,Dw_i)\,dx\le\dashint_{8B_i}\Psi_a(x,Du_i)\,dx. 
\end{equation}
We next verify that $\Psi(\cdot,Dw_i)\in L^1(8B_i),$
which guarantees that the forthcoming test function is admissible. Notice that using \eqref{step1_prob} the right hand side of \eqref{eq:step3-problem} is finite. Since $\Psi(x,z)=\Psi_a(x,z)
+\bigl(a(x)-a(x_{i,a}^m)\bigr)G(|z|),$
it is enough to prove that $G(|Dw_i|)\in L^1(8B_i).$ We distinguish two cases.

\medskip
\noi\textit{Case} $(a).$ Suppose that $a(x_{i,a}^m)>0.$
Then $\Psi_a(x,z)\ge a(x_{i,a}^m)G(|z|),$
and therefore
\[
\int_{8B_i}G(|Dw_i|)\,dx\le\frac{1}{a(x_{i,a}^m)}\int_{8B_i}\Psi_a(x,Dw_i)\,dx
<\infty.\]
\medskip
\noi \textit{Case} $(b).$ Assume that $a(x_{i,a}^m)=0.$ Since $a(x)+b(x)\ge\mu,$ we have $b(x_{i,a}^m)\ge\mu.$ Using the H\"older continuity of $b$, together with the choice of the radius in Step~1, we obtain $b(x)\ge\frac{\mu}{2}\text{ for every }x\in8B_i.$
Hence $\Psi_a(x,z)\ge\frac{\mu}{2}H(|z|).$
By the assumption $G(t)\le c_0(H(t)+1),$
we infer
\[
G(|Dw_i|)
\le
\frac{2c_0}{\mu}\Psi_a(x,Dw_i)+c_0,
\]
and consequently
\[
\int_{8B_i}G(|Dw_i|)\,dx
\le
\frac{2c_0}{\mu}
\int_{8B_i}\Psi_a(x,Dw_i)\,dx
+c|8B_i|
<\infty.
\]
Combining the two cases yields $G(|Dw_i|)\in L^1(8B_i).$
Moreover, H\"older continuity of $a(\cdot)$ implies $|a(x)-a(x_{i,a}^m)|
\le
[a]_{0,\alpha}(8\rho_i)^\alpha$ and therefore
\[\int_{8B_i}\Psi(x,Dw_i)\,dx
\le
\int_{8B_i}\Psi_a(x,Dw_i)\,dx
+[a]_{0,\alpha}(8\rho_i)^\alpha
\int_{8B_i}G(|Dw_i|)\,dx
<\infty.\]
Thus, $\Psi(\cdot,Dw_i)\in L^1(8B_i)$
and hence $w_i-u_i\in W^{1,\Psi}_0(8B_i).$

Since $w_i-u_i\in W^{1,\Psi}_0(8B_i),$ it is admissible as a test function. Subtracting the weak formulations satisfied by $u_i$ and $w_i$, we obtain
\begin{equation*}
\dashint_{8B_i}\langle \A_a(x,Dw_i)-\A_a(x,Du_i),D\varphi\rangle\,dx=
\dashint_{8B_i}\langle \A(x,Du_i)-\A_a(x,Du_i),D\varphi\rangle\,dx,
\end{equation*}
where $\varphi=w_i-u_i$.

Using the monotonicity of $\A_a$, we infer
\begin{align*}
&\dashint_{8B_i}\Bigl(a(x_{i,a}^m)|V_G(Du_i)-V_G(Dw_i)|^2
+b(x)|V_H(Du_i)-V_H(Dw_i)|^2\Bigr)\,dx \nonumber\\
&\qquad\le
c\dashint_{8B_i}
\langle \A(x,Du_i)-\A_a(x,Du_i),D(w_i-u_i)\rangle\,dx.
\end{align*}
From \eqref{growth_conditions}, we have
\[
\A(x,z)-\A_a(x,z)
\le
\bigl(a(x)-a(x_{i,a}^m)\bigr)
G'(|z|)\frac{z}{|z|}.
\]
Thus, we obtain
\begin{align*}
&\dashint_{8B_i}\Bigl(a(x_{i,a}^m)|V_G(Du_i)-V_G(Dw_i)|^2
+b(x)|V_H(Du_i)-V_H(Dw_i)|^2\Bigr)\,dx \nonumber\\
&\qquad\le
c\,\omega_a(8\rho_i)
\dashint_{8B_i}
G'(|Du_i|)
|D(w_i-u_i)|\,dx,
\end{align*}
where $\omega_a(8\rho_i):=\sup_{\substack{x,y\in8B_i}}|a(x)-a(y)|.$ Applying Young's inequality gives
\[
G'(|Du_i|)|D(w_i-u_i)|\le\varepsilon G(|D(w_i-u_i)|)+c_\varepsilon G(|Du_i|).
\]
Using the $\Delta_2$-condition for $G$, we have
\[
G(|D(u_i-w_i)|)
\le
c\bigl(G(|Du_i|)+G(|Dw_i|)\bigr),
\]
and the estimate $G(t)\le c\bigl(\Psi(x,t)+1\bigr),$
we deduce
\begin{align*}
&\dashint_{8B_i}\Bigl(a(x_{i,a}^m)|V_G(Du_i)-V_G(Dw_i)|^2
+b(x)|V_H(Du_i)-V_H(Dw_i)|^2\Bigr)\,dx \nonumber\\
&\qquad\le
c\,\omega_a(8\rho_i)
\dashint_{8B_i}
\bigl(\Psi_{a,1}(x,Dw_i)+\Psi_1(x,Du_i)\bigr)\,dx,
\end{align*}
where $c\equiv c(n,s(G),s(H),\mu).$ Finally, using \eqref{eq:fist_comp}, \eqref{eq:second_comp}
and the H\"older continuity of $a$,
we conclude that
\begin{align}\label{eq:step3-final}
\dashint_{8B_i}\Bigl(a(x_{i,a}^m)|V_G(Du_i)-V_G(Dw_i)|^2
+b(x)&|V_H(Du_i)-V_H(Dw_i)|^2\Bigr)\,dx\nonum\\
&\qquad\le
\hat c\,\rho_i^\alpha
\dashint_{16B_i}\Psi_1(x,Du)\,dx,
\end{align}
here $\hat c\equiv \hat c(n,s(G),s(H),\mu).$

\medskip
\noi\textbf{Step 4: A third comparison estimate.} Let $x_{i,b}^M\in \overline{2B_i}$ be such that
\[b(x_{i,b}^M)=\sup_{x\in 2B_i}b(x)
\text{ and set }
\Psi_{a,b}(z):=a(x_{i,a}^m)G(|z|)+b(x_{i,b}^M)H(|z|).
\]
By Theorem~\ref{thm:fractional_diff} with $a(x)\equiv a(x_{i,a}^m)$ and $ b(x)\equiv b(x_{i,b}^M)$, there exists a unique solution $v_i\in w_i+W_0^{1,\Psi_{a,b}}(2B_i)$
such that
\begin{equation}\label{eq:step4-problem}
\begin{cases}
-\operatorname{div}\A_{a,b}(x,Dv_i)=0 & \text{in }2B_i,\\
v_i\in w_i+W^{1,\Psi_{a,b}}_0(2B_i).
\end{cases}
\end{equation}
where $\A_{a,b}(x,z)=\tilde\A(a(x_{i,a}^m),b(x_{i,b}^M),z)$. Moreover,
\begin{equation}\label{eq:third_comp}
    \dashint_{2B_i}\Psi_{a,b}(Dv_i)\,dx\le\dashint_{2B_i}\Psi_{a,b}(Dw_i)\,dx.
\end{equation}
We claim that the right-hand side is finite. Suppose that $a(x_{i,a}^m)=0$. Then $b(x_{i,b}^M)\ge \mu,$
and therefore
\[
\Psi_{a,b}(z)=b(x_{i,b}^M)H(|z|)\ge \mu H(|z|).
\]
Since $\ds\int_{2B_i}\Psi_a(x,Dw_i)\,dx<\infty,$
we conclude that
\[\int_{2B_i}H(|Dw_i|)\,dx\le
\frac{1}{\mu}\int_{2B_i}\Psi_a(x,Dw_i)\,dx<\infty.
\]
 Suppose that $a(x_{i,a}^m)>0$. Since $w_i$ is a unique solution of \eqref{eq:step3-problem}, we may apply \cite[Theorem~5.1]{Baasandorj2020} to the rescaled integrand
\[\frac1{a(x_{i,a}^m)}\Psi_a(x,z)=G(|z|)+\frac{b(x)}{a(x_{i,a}^m)}H(|z|),\]
which yields $H(|Dw_i|)\in L^1(2B_i).$ Hence, in either case, $\ds \int_{2B_i}H(|Dw_i|)\,dx<\infty.$
Consequently,
\[
\int_{2B_i}\Psi_{a,b}(Dw_i)\,dx=a(x_{i,a}^m)\int_{2B_i}G(|Dw_i|)\,dx+
b(x_{i,b}^M)\int_{2B_i}H(|Dw_i|)\,dx<\infty.
\]
Now, we prove that $v_i\in W^{1,H}(2B_i)$, i.e., $H(|Dv_i|)\in L^1(2B_i).$ If $b(x_{i,b}^M)>0$, then
\[
b(x_{i,b}^M)\int_{2B_i}H(|Dv_i|)\,dx
\le
\int_{2B_i}\Psi_{a,b}(Dv_i)\,dx\le \int_{2B_i}\Psi_{a,b}(Dw_i)\,dx<\infty,
\]
whereas if $b(x_{i,b}^M)=0$, then $b\equiv0$ in $2B_i$, so that $\Psi_{a,b}\equiv\Psi_a\text{ in }2B_i,$
and therefore $v_i=w_i\text{ in }2B_i.$
Since $H(|Dw_i|)\in L^1(2B_i)$, we again conclude that $H(|Dv_i|)\in L^1(2B_i).$

Hence $v_i-w_i\in W_0^{1,\Psi_{a,b}}(2B_i)$, so it can be used as a test function. Subtracting the weak formulations for $v_i$ and $w_i$, we obtain
\[
\dashint_{2B_i}\bigl\langle \A_{a,b}(Dv_i)-\A_{a,b}(Dw_i),D\varphi\bigr\rangle
=
\dashint_{2B_i}\bigl\langle \A_a(x,Dw_i)-\A_{a,b}(Dw_i),D\varphi\bigr\rangle
\]
for $\varphi=v_i-w_i$. Now, proceeding in a similar way to Step 3, we conclude
\begin{align}
&\dashint_{2B_i}\Bigl(a(x_{i,a}^m)|V_G(Dv_i)-V_G(Dw_i)|^2
+b(x_{i,b}^M)|V_H(Dv_i)-V_H(Dw_i)|^2\Bigr)\,dx
\nonumber\\
&\qquad\qquad\le
c\,\omega_b(2\rho_i)
\dashint_{2B_i}
H'(|Dw_i|)|D(v_i-w_i)|\,dx:=I.
\label{eq:step4-final}
\end{align}

\medskip
\noi\textbf{Step 5: Estimate in the $(G,H)$-phase.} First, we estimate $I$ in the $(G,H)$-phase, that is, for $K_0\ge 4,$ we have
$$\inf_{x\in2B_i}b(x)>K_0[b]_{0,\beta}\rho_i^\beta.$$ Then, we observe that
\begin{equation*}
\operatorname*{osc}_{2B_i}b\le4[b]_{0,\beta}\rho_i^\beta\le\frac{4b(x)}{K_0}
\qquad\text{for every }x\in2B_i.
\end{equation*}
Consequently,
\begin{equation}\label{eq:7.44}
b(x_{i,b}^M)\le b(x)+\operatorname*{osc}_{2B_i}b\le b(x)+4[b]_{0,\beta}\rho_i^\beta\le b(x)+\frac{4b(x)}{K_0}\le2b(x).
\end{equation}

Using the convexity of $H$, we estimate
\begin{align}
I&\le\frac{c}{K_0}\dashint_{2B_i}b(x)H'(|Dw_i|)|Dv_i-Dw_i|\,dx\nonumber\\
&\le\frac{c}{K_0}\dashint_{2B_i}b(x)H(|Dw_i|+|Dv_i|)\,dx\nonumber\\
&\le\frac{c}{K_0}\dashint_{2B_i}\bigl[\Psi_{a,b}(Dv_i)+b(x)H(|Dw_i|)\bigr]\,dx\nonumber\\
&\overset{\eqref{eq:third_comp}}{\le}\frac{c}{K_0}\dashint_{2B_i}\bigl[\Psi_{a,b}(Dw_i)+b(x_{i,b}^M)H(|Dw_i|)\bigr]\,dx\nonumber\\
&\overset{\eqref{eq:7.44}}{\le}\frac{c}{K_0}\dashint_{2B_i}\Psi_a(x,Dw_i)\,dx\nonumber\\
&\overset{\eqref{eq:fist_comp},\eqref{eq:second_comp}}{\le}\frac{c}{K_0}\dashint_{16B_i}\Psi_1(x,Du)\,dx,
\label{eq:7.45}
\end{align}
Here $c\equiv c(n,s(G),s(H),\nu,L).$ Substituting \eqref{eq:7.45} into \eqref{eq:step4-final}, we obtain
\begin{align}\label{eq:G_H_phase}
\dashint_{2B_i}\Bigl(a(x_{i,a}^m)|V_G(Dw_i)-V_G(Dv_i)|^2
+b(x_{i,b}^M)|V_H(Dw_i)-V_H(Dv_i)|^2\Bigr)\,dx\nonum\\
\le
\frac{c_1}{K_0}\dashint_{16B_i}\Psi_1(x,Du)\,dx,
\end{align}
where $c_1\equiv c_1(n,s(G),s(H),\nu,L).$ Furthermore, estimate \eqref{eq:7.45} also yields
\begin{equation}\label{eq:G_H_final}
\int_{2B_i}\Psi_{a,b}(Dw_i)\,dx
\le c\int_{16B_i}\Psi_1(x,Du)\,dx,
\end{equation}
where $c\equiv c(n,s(G),s(H),\nu,L).$

\medskip
\noi\textbf{Step 6: Estimate in the $G$-phase.} Now, we estimate $I$ in the $G$-phase, that is, for $K_0\ge 4$, we consider
$$\inf_{x\in2B_i}b(x)\le K_0[b]_{0,\beta}\rho_i^\beta.$$
 By the H\"older continuity of \(b\), we can write
\begin{equation}\label{7.48}
\begin{aligned}
b(x_{i,b}^M)=\sup_{2B_i}b(x)\le 4[b]_{0,\beta}\rho_i^\beta+\inf_{2B_i}b(x)\le (4+K_0)[b]_{0,\beta}\rho_i^\beta.
\end{aligned}
\end{equation}
Since \(a(x)+b(x)\ge\mu\) in \(8B_i\) and, by the H\"older continuity of $b$,
\[
\sup_{8B_i}b(x)\le\inf_{2B_i}b(x)+\operatorname*{osc}_{8B_i}b\le K_0[b]_{0,\beta}\rho_i^\beta+[b]_{0,\beta}(16\rho_i)^\beta\le(16+K_0)[b]_{0,\beta}\rho_i^\beta,
\]
recalling the definition of $x_{i,a}^m$ in Step~3, we get
\[
a(x_{i,a}^m)=\inf_{\overline{8B_i}}a(x)\ge\mu-\sup_{8B_i}b(x)\ge\mu-(16+K_0)[b]_{0,\beta}\rho_i^\beta.
\]
Choosing \(R_*>0\) such that \((16+K_0)[b]_{0,\beta}R_*^\beta\le\mu/2\), we obtain
\(a(x_{i,a}^m)\ge\mu/2\) whenever \(\rho_i\le R_*\). Therefore \(w_i\) solves the Euler-Lagrange equation corresponding to the normalized functional
\[
\frac{1}{a(x_{i,a}^m)}\Psi_a(x,z)
=G(|z|)+\widetilde b(x)H(|z|),\qquad
\widetilde b(x):=\frac{b(x)}{a(x_{i,a}^m)},
\]
where \([\widetilde b]_{0,\beta}\le2[b]_{0,\beta}/\mu\). Hence, we can apply the conditional reverse H\"older inequality \cite[Theorem~7.1]{Baasandorj2020}. Taking
\[
s=1+\frac{\gamma}{n}\le1+\frac{\beta}{n}<\frac{n}{n-\beta},
\]
we obtain
\begin{equation}\label{7.49}
\dashint_{2B_i}G^{1+\frac{\gamma}{n}}(|Dw_i|)\,dx
\le
c\left(\dashint_{8B_i}\Psi_a(x,Dw_i)\,dx\right)^{1+\frac{\gamma}{n}},
\end{equation}
where $c\equiv c\!\left(\texttt{data},K_0\right).$
Moreover, \(c\) is nondecreasing with respect to the \(L^1(8B_i)\)-norm of \(\Psi_a(\cdot,Dw_i)\). By \eqref{eq:fist_comp} and \eqref{eq:second_comp}, we get
\[
\|\Psi_a(\cdot,Dw_i)\|_{L^1(8B_i)}
\le
c\|\Psi_a(\cdot,Du)\|_{L^1(8B_i)}\le
c\|\Psi_1(\cdot,Du)\|_{L^1(16B_i)},
\]
therefore \(c\) depends only on the data. Since \(\operatorname{osc}_{2B_i}b\le b(x_{i,b}^M)\), Young's inequality gives, for every \(\tau\in(0,1)\),
\begin{align}
I
&\le
cb(x_{i,b}^M)\dashint_{2B_i}H'(|Dw_i|)|Dw_i|\,dx
+c\dashint_{2B_i}H'(|Dw_i|)|Dv_i|\,dx \nonumber\\
&\le
c\dashint_{2B_i}b(x_{i,b}^M)H(|Dw_i|)\,dx
+\tau\dashint_{2B_i}b(x_{i,b}^M)H(|Dv_i|)\,dx\nonumber\\
&\qquad
+\frac{c}{\tau^{s(H)}}\dashint_{2B_i}b(x_{i,b}^M)H(|Dw_i|)\,dx \nonumber\\
&\le
c\left(1+\frac{1}{\tau^{s(H)}}\right)\dashint_{2B_i}b(x_{i,b}^M)H(|Dw_i|)\,dx
+\tau\dashint_{2B_i}b(x_{i,b}^M)H(|Dv_i|)\,dx,
\label{7.50}
\end{align}
where \(c\equiv c(n,s(G),s(H),\nu,L)\).

We next estimate the first term on the right-hand side of \eqref{7.50}. By \eqref{7.48}, we have
\begin{align}
&\dashint_{2B_i}b(x_{i,b}^M)H(|Dw_i|)\,dx
\le
c\rho_i^\beta\dashint_{2B_i}\left(G(|Dw_i|)+G^{1+\frac{\gamma}{n}}(|Dw_i|)\right)\,dx \nonumber\\
&\quad\overset{\eqref{7.49}}{\le}
c\rho_i^\beta\dashint_{2B_i}\Psi_{a,1}(x,Dw_i)\,dx
+c\rho_i^\beta\left(\dashint_{8B_i}\Psi_{a,1}(x,Dw_i)\,dx\right)^{1+\frac{\gamma}{n}} \nonumber\\
&\quad\le
c\left(\rho_i^\beta+\rho_i^\beta\left(\dashint_{8B_i}\Psi_{a,1}(x,Dw_i)\,dx\right)^{\frac{\gamma}{n}}\right)
\dashint_{8B_i}\Psi_{a,1}(x,Dw_i)\,dx .
\label{7.51a}
\end{align}
Applying H\"older's inequality, we obtain
\begin{align*}
    \dashint_{8B_i}\Psi_{a,1}(x,Dw_i)\,dx&=\dashint_{8B_i}\Psi_{a}(x,Dw_i)\,dx+c\\
   & \overset{\eqref{eq:second_comp}}{\le}\dashint_{8B_i}\Psi_{a}(x,Du_i)\,dx+c\\
   &\overset{\text{H\"older}}{\le}\left(\dashint_{8B_i}(\Psi_{a}(x,Du_i))^{1+\delta}\,dx\right)^{\frac1{1+\delta}}+c.
\end{align*}
Using the comparison estimate for \(u_i\), it follows that
\[
\dashint_{8B_i}\Psi_{a,1}(x,Dw_i)\,dx
\overset{\eqref{eq:fist_comp}}{\le}
\left(c_2\dashint_{8B_i}\Psi_a(x,Du)^{1+\delta}\,dx+c\right)^{\frac1{1+\delta}}+c.
\]
Since \(1/(1+\delta)<1\), an elementary inequality, we get
\[
\dashint_{8B_i}\Psi_{a,1}(x,Dw_i)\,dx
\le
c\left(\dashint_{8B_i}\Psi_a(x,Du)^{1+\delta}\,dx\right)^{\frac1{1+\delta}}+c.
\]
Moreover, by Theorem~\ref{thm:6.1}, we obtain
\[
\int_{8B_i}\Psi_a(x,Du)^{1+\delta}\,dx\le\int_{\Omega_1}\Psi_a(x,Du)^{1+\delta}\,dx \overset{\eqref{eq:energy_bound}}{\le}c(\mathrm{\texttt{data}}_0),
\]
and therefore
\[
\left(\dashint_{8B_i}\Psi_a(x,Du)^{1+\delta}\,dx\right)^{\frac1{1+\delta}}
\le
c\rho_i^{-\frac{n}{1+\delta}}.
\]
Consequently,
\[
\dashint_{8B_i}\Psi_{a,1}(x,Dw_i)\,dx
\le
c\rho_i^{-\frac{n}{1+\delta}}+c.
\]
Substituting this estimate into \eqref{7.51a}, we obtain
\[
\dashint_{2B_i}b(x_{i,b}^M)H(|Dw_i|)\,dx
\le
c\left(\rho_i^\beta+\rho_i^{\beta-\frac{\gamma}{1+\delta}}\right)
\dashint_{8B_i}\Psi_{a,1}(x,Dw_i)\,dx.
\]
Since \(\rho_i\le1\), setting
\[
s_1:=\min\left\{\beta,\frac{(\beta-\gamma)+\beta\delta}{1+\delta}\right\}=\frac{(\beta-\gamma)+\beta\delta}{1+\delta}>0,
\]
we conclude that
\begin{equation}\label{7.51}
\dashint_{2B_i}b(x_{i,b}^M)H(|Dw_i|)\,dx
\le
c\rho_i^{s_1}
\dashint_{8B_i}\Psi_{a,1}(x,Dw_i)\,dx .
\end{equation}

We now estimate the second term on the right-hand side of \eqref{7.50}. We argue as follows
\begin{align}
\dashint_{2B_i}b(x_{i,b}^M)H(|Dv_i|)\,dx
&\le
c\dashint_{2B_i}\Big(a(x_{i,a}^m)G(|Dv_i|)+b(x_{i,b}^M)H(|Dv_i|)\Big)\,dx \nonumber\\
&\overset{\eqref{eq:third_comp}}{\le}
c\dashint_{2B_i}\Big(a(x_{i,a}^m)G(|Dw_i|)+b(x_{i,b}^M)H(|Dw_i|)\Big)\,dx \nonumber\\
&\overset{\eqref{eq:equivalence}, \eqref{7.51}}{\le}
c\dashint_{8B_i}\Psi_{a,1}(x,Dw_i)\,dx
+c\rho_i^{s_1}\dashint_{8B_i}\Psi_{a,1}(x,Dw_i)\,dx \nonumber\\
&\le
c(1+\rho_i^{s_1})\dashint_{8B_i}\Psi_{a,1}(x,Dw_i)\,dx.
\label{7.52}
\end{align}
Substituting \eqref{7.51} and \eqref{7.52} into \eqref{7.50}, we find
\begin{equation*}
I\le
c\left(\left(1+\frac{1}{\tau^{s(H)}}\right)\rho_i^{s_1}
+\tau(1+\rho_i^{s_1})\right)
\dashint_{8B_i}\Psi_{a,1}(x,Dw_i)\,dx.
\end{equation*}
Choosing \(\tau=\rho_i^{\frac{s_1}{2s(H)}}\), we obtain
\begin{equation}\label{eq:28}
I\le
c\rho_i^{s_0}
\dashint_{16B_i}\Psi_{1}(x,Du)\,dx,
\end{equation}
where \(s_0:=\frac{s_1}{2s(H)}\). Here we have used that \(s(H)\ge1\) and \(\rho_i\le1\).

Combining \eqref{eq:step4-final} and \eqref{eq:28}, we conclude that
\begin{align}\label{eq:G_phase}
\dashint_{2B_i}\!\left(a(x_{i,a}^m)|V_G(Dv_i)-V_G(Dw_i)|^2
+b(x_{i,b}^M)|V_H(Dv_i)-V_H(Dw_i)|^2\right)\,dx\nonum\\
\le
c_0\rho_i^{s_0}
\dashint_{16B_i}\Psi_1(x,Du)\,dx,
\end{align}
where \(c_0\equiv c_0(\texttt{data}_0,K_0)\).

Finally, using \eqref{7.52},\eqref{eq:fist_comp} together with \eqref{eq:second_comp}, we obtain
\begin{equation}\label{eq:G_final}
\dashint_{2B_i}\Psi_{a,b}(Dw_i)\,dx
\le
c\dashint_{16B_i}\Psi_1(x,Du)\,dx,
\end{equation}
where \(c\equiv c(\mathrm{\texttt{data}}_0,K_0)\).

\medskip
\noi\textbf{Step 7: Matching the two phases and comparison estimates.}
Combining \eqref{eq:G_H_phase} and \eqref{eq:G_phase}, we obtain
\begin{align}
&\dashint_{2B_i}\Bigl(a(x_{i,a}^m)|V_G(Dw_i)-V_G(Dv_i)|^2+b(x)|V_H(Dw_i)-V_H(Dv_i)|^2\Bigr)\,dx \nonumber\\
&\qquad\le
\dashint_{2B_i}\Bigl(a(x_{i,a}^m)|V_G(Dw_i)-V_G(Dv_i)|^2+b(x_{i,b}^M)|V_H(Dw_i)-V_H(Dv_i)|^2\Bigr)\,dx \nonumber\\
&\qquad\le
\left(\frac{c_1}{K_0}+c_0\rho_i^{s_0}\right)
\dashint_{16B_i}\Psi_1(x,Du)\,dx.
\label{eq:both_phases}
\end{align}
Here $c_1\equiv c_1(n,s(G),s(H),\nu,L)$ and $c_0\equiv c_0(\texttt{data}_0,K_0)$, while $K_0\ge4$ will be chosen later and $s_0$ is introduced in \eqref{eq:28}. Now, using Lemma~\ref{lem:modular_equivalence}, \eqref{eq:G_H_final} and \eqref{eq:G_final}, we also have
\begin{align}
&\dashint_{2B_i}|V_G(Du_i)-V_G(Dv_i)|^2\,dx\le2\dashint_{2B_i}\Bigl(|V_G(Du_i)|^2+|V_G(Dv_i)|^2\Bigr)\,dx \nonumber\\
&\qquad\le c^*(K_0)\dashint_{16B_i}\Psi_1(x,Du)\,dx,
\label{eq:29}
\end{align}
where $c^*(K_0)\equiv c^*(\texttt{data}_0,K_0)$. Combining \eqref{eq:both_phases}, \eqref{eq:29} and \eqref{eq:step3-final} with \eqref{eq:first_est}, for every $\varepsilon\in(0,1)$, we obtain
\begin{align}\label{eq:combined_est}
\dashint_{2B_i}\Bigl(a(x)|V_G(Dv_i)-V_G(Du)|^2+b(x)|V_H(Dv_i)-V_H(Du)|^2\Bigr)\,dx\nonum\\
\le
\left(2\varepsilon+2c_0r^{s_0}+2(c^*(K_0)+\hat c)r^\alpha+\frac{2c_1}{K_0}\right)
\dashint_{16B_i}\Psi_1(x,Du)\,dx
+2c_\varepsilon\dashint_{16B_i}\Psi_1(x,F)\,dx.
\end{align}
Again, $c_1\equiv c_1(n,s(G),s(H),\nu,L)$, $c_0\equiv c_0(\texttt{data}_0,K_0)$, $c_\varepsilon\equiv c_\varepsilon(n,s(G),s(H),\nu,L,\varepsilon)$ and $s_0$ is given by \eqref{eq:28}. Now, we introduce the notation
\begin{equation}\label{eq:defn_S}
\mathfrak S(\varepsilon,r,K_0,M):=
2\varepsilon+2c_0r^{s_0}+2(c^*(K_0)+\hat c)r^\alpha+\frac{2c_1}{K_0}+\frac{2c_\varepsilon}{M}.
\end{equation}
Using \eqref{step1-final}$_2$ in \eqref{eq:combined_est}, we deduce that, for every $K_0\ge4$, the following estimate follows
\begin{equation}\label{eq:31}
\dashint_{2B_i}\Bigl(a(x)|V_G(Du)-V_G(Dv_i)|^2+b(x)|V_H(Du)-V_H(Dv_i)|^2\Bigr)\,dx
\le
\mathfrak S(\varepsilon,r,K_0,M)\lambda.
\end{equation}
This estimate holds for every ball $B_i$ in the covering constructed in Step~1 and is independent of whether the condition of $G$-phase or $(G,H)$-phase is satisfied.

Finally, we claim that
\begin{equation}
\dashint_{2B_i}\Bigl(a(x_{i,a}^m)G(|Dv_i|)+b(x_{i,b}^M)H(|Dv_i|)+1\Bigr)\,dx
\le
c\lambda,
\end{equation}
where $c\equiv c(\texttt{data}_0)$. Indeed,
\begin{align}\label{eq:step_7_final}
\dashint_{2B_i}\Bigl(\Psi_{a,b}(Dv_i)+1\Bigr)\,dx&\le
c\dashint_{2B_i}\bigl(\Psi_{a,b}(Dw_i)+1\bigr)\,dx\nonum\\
&\le
c\dashint_{16B_i}\Psi_1(x,Du)\,dx
\overset{\eqref{step1-final}_2}{\le}
c\lambda,
\end{align}
where the first inequality follows from \eqref{eq:third_comp}, while the second one follows from \eqref{eq:G_H_final} if \(\inf_{x\in2B_i}b(x)>K_0[b]_{0,\beta}\rho_i^\beta\) and from \eqref{eq:G_final} if \(\inf_{x\in2B_i}b(x)\le K_0[b]_{0,\beta}\rho_i^\beta\).

\medskip
\noi\textbf{Step 8: A priori estimate for $Dv_i$.} First, we apply \cite[Theorem~1.2]{Lieberman1991} to the problem \eqref{eq:step4-problem} satisfied by $v_i$ and obtain the Lipschitz estimate
\begin{equation}\label{eq:30}
   \sup_{x\in B_i}\Psi_{a,b}(Dv_i)+1\le
c\dashint_{2B_i}\bigl(\Psi_{a,b}(Dv_i)+1\bigr)\,dx
\overset{\eqref{eq:step_7_final}}{\le} c\lambda.
\end{equation}
Now, recall that $\Psi(x,Dv_i)=a(x)G(|Dv_i|)+b(x)H(|Dv_i|).$
Then using Lemma~\ref{lem:modular_equivalence}, we estimate as follows
\begin{align*}
a(x)G(|Dv_i|)
&\le \Bigl(\operatorname*{osc}_{2B_i}a+a(x_{i,a}^m)\Bigr)G(|Dv_i|)\\
&\le \bigl(\operatorname*{osc}_{2B_i}a\bigr)G(|Dv_i|)
+a(x_{i,a}^m)G(|Dv_i|)\\
&\le c\bigl(\Psi_{a,b}(Dv_i)+1\bigr)
+\Psi_{a,b}(Dv_i)\\
&\le c\bigl(\Psi_{a,b}(Dv_i)+1\bigr).
\end{align*}
Moreover, by the definition of $x_{i,b}^M$, we have
\[
b(x)H(|Dv_i|)
\le b(x_{i,b}^M)H(|Dv_i|)
\le \Psi_{a,b}(Dv_i).
\]
Hence, $\Psi(x,Dv_i)
\le c\bigl(\Psi_{a,b}(Dv_i)+1\bigr),$ which implies
\begin{equation}\label{estimate_v_i}
   \sup_{x\in B_i}\Psi(x,Dv_i)\le
c\sup_{x\in B_i}\bigl(\Psi_{a,b}(Dv_i)+1\bigr)
\overset{\eqref{eq:30}}{\le} c_\ell\lambda, 
\end{equation}
where $c_\ell\equiv c_\ell(\texttt{data}_0).$

\medskip
\noi\textbf{Step 9: Estimates involving level sets.} Using \eqref{eq:Phi-growth} together with an elementary inequality, we obtain
\begin{align}
&2c_\ell\lambda (s(G)+s(H))\left|B_i\cap\left\{\Psi_1(x,Du)>4(s(G)+s(H))c_\ell\lambda\right\}\right|\nonum\\
&\qquad+\frac12\int_{B_i\cap\{\Psi_1(x,Du)>4(s(G)+s(H))c_\ell\lambda\}}\Psi_1(x,Du)\,dx\nonumber\\
&\le\int_{B_i\cap\{\Psi_1(x,Du)>4(s(G)+s(H))c_\ell\lambda\}}\Psi_1(x,Du)\,dx\nonumber\\
&\le\frac{s(G)+s(H)}{1+s(G)+s(H)}
\int_{B_i\cap\{\Psi_1(x,Du)>4(s(G)+s(H))c_\ell\lambda\}}
\hspace{-2em}\bigl(a(x)|V_G(Du)|^2+b(x)|V_H(Du)|^2\bigr)\,dx\nonumber\\
&\le\frac{2(s(G)+s(H))}{1+s(G)+s(H)}
\int_{B_i}\bigl(a(x)|V_G(Du)-V_G(Dv_i)|^2+b(x)|V_H(Du)-V_H(Dv_i)|^2\bigr)\,dx\nonumber\\
&\qquad+2(s(G)+s(H))
\int_{B_i\cap\{\Psi_1(x,Du)>4(s(G)+s(H))c_\ell\lambda\}}
\Psi_1(x,Dv_i)\,dx\nonumber\\
&\le2(s(G)+s(H))
\int_{B_i}\bigl(a(x)|V_G(Du)-V_G(Dv_i)|^2+b(x)|V_H(Du)-V_H(Dv_i)|^2\bigr)\,dx\nonumber\\
&\qquad+2c_\ell\lambda(s(G)+s(H))
\left|B_i\cap\left\{\Psi_1(x,Du)>4(s(G)+s(H))c_\ell\lambda\right\}\right|,
\end{align}
where the last inequality follows from \eqref{estimate_v_i}. Hence,
\begin{align}
&\int_{B_i\cap\{\Psi_1(x,Du)>4(s(G)+s(H))c_\ell\lambda\}}\Psi_1(x,Du)\,dx\nonum\\
&\qquad\le4|2B_i|\dashint_{2B_i}\bigl(a(x)|V_G(Du)-V_G(Dv_i)|^2+b(x)|V_H(Du)-V_H(Dv_i)|^2\bigr)\,dx.
\end{align}

Using \eqref{eq:31}, \eqref{eq:32} and the identity \(|2B_i|=10^n|\widetilde B_i|\), we obtain
\begin{equation}\label{eq:33}
\int_{B_i\cap\{\Psi_1(x,Du)>4(s(G)+s(H))c_\ell\lambda\}}\Psi_1(x,Du)\,dx
\le40^n\mathfrak S(\varepsilon,r,K_0,M)\lambda|\widetilde B_i|.
\end{equation}
Next, we want to estimate \(|\widetilde B_i|\). From \eqref{step1-final}, we have
\begin{equation}
|\widetilde B_i|=\frac1\lambda\int_{\widetilde B_i}\bigl(\Psi_1(x,Du)+M\Psi_1(x,F)\bigr)\,dx.
\end{equation}
We split the integral according to the corresponding level sets to obtain
\begin{align}
|\widetilde B_i|&\le\frac1\lambda
\int_{\widetilde B_i\cap\{\Psi_1(x,Du)>\lambda/4\}}\Psi_1(x,Du)\,dx\nonum\\
&\quad\qquad+\frac1\lambda\int_{\widetilde B_i\cap\{\Psi_1(x,F)>\lambda/(4M)\}}M\Psi_1(x,F)\,dx+\frac12|\widetilde B_i|,
\end{align}
which gives
\begin{equation}\label{eq:34}
|\widetilde B_i|\le\frac2\lambda\int_{\widetilde B_i\cap\{\Psi_1(x,Du)>\lambda/4\}}\Psi_1(x,Du)\,dx+\frac2\lambda
\int_{\widetilde B_i\cap\{\Psi_1(x,F)>\lambda/(4M)\}}M\Psi_1(x,F)\,dx.
\end{equation}
Combining \eqref{eq:33} and \eqref{eq:34}, we arrive at
\begin{align}\label{eq35}
&\int_{B_i\cap\{\Psi_1(x,Du)>4(s(G)+s(H))c_\ell\lambda\}}\Psi_1(x,Du)\,dx\nonumber\\
&\qquad\le80^n\mathfrak S(\varepsilon,r,K_0,M)
\int_{\widetilde B_i\cap\{\Psi_1(x,Du)>\lambda/4\}}\Psi_1(x,Du)\,dx\nonumber\\
&\qquad\quad+80^n\mathfrak S(\varepsilon,r,K_0,M)
\int_{\widetilde B_i\cap\{\Psi_1(x,F)>\lambda/(4M)\}}M\Psi_1(x,F)\,dx.
\end{align}

Since \(\{B_i\}\) covers \(E_\lambda^{r_1}\) and \(E_{4(s(G)+s(H))c_\ell\lambda}^{r_1}\subset E_\lambda^{r_1}\), summing over \(i\) yields
\begin{equation*}
\int_{E^{r_1}_{4(s(G)+s(H))c_\ell\lambda}}\Psi_1(x,Du)\,dx\le\sum_i\int_{B_i\cap\{\Psi_1(x,Du)>4(s(G)+s(H))c_\ell\lambda\}}\Psi_1(x,Du)\,dx.
\end{equation*}

For convenience, we introduce the notation
\begin{equation*}
\mathfrak D_\lambda^s:=\{x\in B_s(x_0):\Psi_1(x,F(x))>\lambda\},\qquad \frac{R}{2}\le s\le R,\ \lambda>0.
\end{equation*}

Finally, using the fact that the balls \(\{\widetilde B_i\}\) are pairwise disjoint together with \eqref{step1-geometry}, summing \eqref{eq35} over \(i\) gives
\begin{align*}
\int_{E^{r_1}_\lambda}\Psi_1(x,Du)\,dx
&\le80^n\mathfrak S(\varepsilon,r,K_0,M)
\int_{E^{r_2}_{\lambda/(16(s(G)+s(H))c_\ell)}}\Psi_1(x,Du)\,dx\nonumber\\
&\quad+80^n\mathfrak S(\varepsilon,r,K_0,M)
\int_{\mathfrak D^{r_2}_{\lambda/(16(s(G)+s(H))c_\ell M)}}M\Psi_1(x,F)\,dx,
\end{align*}
for every
\begin{align*}
\lambda\ge\lambda_1&:=4(s(G)+s(H))c_\ell\lambda_0\\
&=\frac{4(s(G)+s(H))c_\ell80^n\,r_2^n}{(r_2-r_1)^n}
\dashint_{B_{r_2}}\bigl[\Psi_1(x,Du)+M\Psi_1(x,F)\bigr]\,dx.
\end{align*}

\medskip
\noi\textbf{Step 10: Conclusion.} To conclude the proof, we employ a standard truncation argument. For $t\ge1,$ we define the truncated function by
\begin{equation*}
[\Psi_1(x,Du)]_t:=\min\{\Psi_1(x,Du),t\}.
\end{equation*}
Proceeding as in Step 10 of \cite[Theorem~1.1]{Baasandorj2020}, we obtain
\begin{align}\label{final_est}
&\dashint_{B_{r_1}}\Theta'([\Psi_1(x,Du)]_t)\Psi_1(x,Du)\,dx\nonum\\
&\le
c_f^{\,s(\Theta)+1}c_\ell^{\,s(\Theta)}
\mathfrak S(\varepsilon,r,K_0,M)
\dashint_{B_{r_2}}
\Theta'([\Psi_1(x,Du)]_t)\Psi_1(x,Du)\,dx
\nonumber\\
&\qquad+c_f^{\,s(\Theta)+1}c_\ell^{\,s(\Theta)}M^{s(\Theta)}\mathfrak S(\varepsilon,r,K_0,M)\dashint_{B_{r_2}}\Theta(\Psi_1(x,F))\,dx\nonumber\\
&\qquad+c_m\,c_f^{\,s(\Theta)+1}c_\ell^{s(\Theta)}\Theta(\lambda_0),
\end{align}
where \(c_\ell\equiv c_\ell(\texttt{data}_0)\) is defined in \eqref{estimate_v_i}, while
\(c_f\equiv c_f(n,s(G),s(H))\). This estimate is valid for every
\(M\ge1\), \(K_0\ge4\), \(r\le1\) and \(\varepsilon\in(0,1)\). We choose
\(\varepsilon\), \(r\), \(K_0\) and \(M\) so that
\begin{equation}\label{small_est}
c_f^{\,s(\Theta)+1}c_\ell^{\,s(\Theta)}\mathfrak S(\varepsilon,r,K_0,M)\le\frac12.
\end{equation}
First, we set
\begin{equation}\label{const_est_1}
K_0:=16c_f^{\,s(\Theta)+1}c_\ell^{\,s(\Theta)+1}c_1,\qquad\varepsilon:=\frac{1}
{16c_f^{\,s(\Theta)+1}c_\ell^{\,s(\Theta)+1}},
\end{equation}
where \(c_1\) is the constant appearing in \eqref{eq:defn_S}. Consequently,
\(c_0\) and \(c_\varepsilon\) in \eqref{eq:defn_S} depend only on
\(\texttt{data}_0\) and \(s(\Theta)\). Next, we choose
\begin{equation}\label{const_est_2}
M:=16c_f^{\,s(\Theta)+1}c_\ell^{\,s(\Theta)+1}c_\varepsilon,
\end{equation}
which again depends only on \(\texttt{data}_0\) and \(s(\Theta)\). Finally, we choose \(r=r(\texttt{data}_0,s(\Theta))\) sufficiently small so that
\begin{equation}\label{const_est_3}
r\le \min \left\{ \left(\frac{1}{16c_f^{\,s(\Theta)+1}c_\ell^{\,s(\Theta)+1}c_0}\right)^{1/s_0}, \left(\frac{1}{16(\hat c+c^*(K_0))}\right)^{1/\alpha}, R_* \right\},
\end{equation}
where \(s_0\) is defined in \eqref{eq:28} and \(R_*\equiv R_*(\mu,\beta,[b]_{0,\beta},K_0)\) is the radius determined in Step~6, so that the $G$-phase estimates of Step~6 are available for every ball $B_i$ with $\rho_i\le r$. With these choices,
\eqref{small_est} is satisfied. Substituting \eqref{const_est_1}--\eqref{const_est_3} into \eqref{final_est} and recalling the definition of \(\lambda_0\) in
\eqref{step1-lambda0}, we infer
\begin{align*}
&\dashint_{B_{r_1}}\Theta'([\Psi_1(x,Du)]_t)\Psi_1(x,Du)\,dx\nonum\\
&\le\frac12\dashint_{B_{r_2}}\Theta'([\Psi_1(x,Du)]_t)\Psi_1(x,Du)\,dx+c(s(\Theta))\dashint_{B_R}\Theta(\Psi_1(x,F))\,dx\nonumber\\
&\qquad+\frac{c^{\,s(\Theta)+1}R^{n(s(\Theta)+1)}}
{(r_2-r_1)^{n(s(\Theta)+1)}}\Theta\!\left(\dashint_{B_R}
\bigl[\Psi_1(x,Du)+M\Psi_1(x,F)\bigr]\,dx\right).
\end{align*} 
where \(c\equiv c(\texttt{data}_0)\), \(c(s(\Theta))\equiv c(\texttt{data}_0,s(\Theta))\) and \(M\equiv M(\texttt{data}_0,s(\Theta))\).
The remaining part of the proof follows exactly as in Step~10 of \cite[Proof of Theorem~1.1]{Baasandorj2020}. Therefore, the proof of the main theorem is complete. \qed

\bibliographystyle{abbrv}
\bibliography{ref}{}
\end{document}